\newif\ifHAL
\newcommand{\RR}{\mathbb{R}}      
\newcommand{\vertiii}[1]{{\|\kern-0.25ex | #1
		| \kern-0.25ex \|}}
\newcommand{\vece}{{\bm e}}
\newcommand{\mean}[1]{\{\kern-1.1mm\{#1\}\kern-1.1mm\}}                  
\newcommand{\jump}[1]{[\![#1]\!]}                        
\newcommand{\ud}{\,\mathrm{d}}
\newcommand{\ndg}[1]{| \kern -.25mm \|{#1}| \kern -.25mm \|}
\newcommand{\nsdg}[1]{| \kern -.25mm \|{#1}| \kern -.25mm \|_{\rm s}}
\newcommand{\su}{\sum_{K\in \mesh}}
\newcommand{\fes}{\hat{V}_{h}^k}
\newcommand{\fesI}{\hat{V}_{h}^{k,\textsc{(I)}}}
\newcommand{\fesII}{\hat{V}_{h}^{k,\textsc{(II)}}}
\newcommand{\fesE}{\hat{V}_{K}^k}
\newcommand{\upI}{^{\textsc{(I)}}}
\newcommand{\upII}{^{\textsc{(II)}}}
\newcommand{\n}{{\bm n}}
\newcommand{\vtx}{{\bm z}}
\newcommand{\Fall}{\mathcal{F}}
\newcommand{\upi}{^{\mathrm{i}}}
\newcommand{\upb}{^{\mathrm{b}}}
\newcommand{\upd}{^{\mathrm{d}}}
\newcommand{\Fb}{\mathcal{F}\upb}
\newcommand{\Fint}{\mathcal{F}\upi}
\newcommand{\dK}{\partial K}
\newcommand{\dKi}{\partial K\upi}
\newcommand{\dKb}{\partial K\upb}
\newcommand{\FK}{\mathcal{F}_{\dK}}
\newcommand{\Ihk}{\mathcal{\hat{I}}_h^k}
\newcommand{\FKi}{\mathcal{F}_{\dK}\upi}
\newcommand{\FKb}{\mathcal{F}_{\dK}\upb}
\newcommand{\interL}{\mathcal{L}}
\newcommand{\interC}{\mathcal{C}}
\newcommand{\bL}{{\bm L}}
\newcommand{\bn}{{\bm n}}
\newcommand{\bV}{{\bm V}}
\newcommand{\bsigma}{{\bm \sigma}}
\newcommand{\st}{\tq}
\newcommand{\tq}{{\;|\;}}
\newcommand{\DIV}{\nabla{\cdot}}
\newcommand{\eqq}{\mathrel{\mathop:}=}
\newcommand{\SCAL}{{\cdot}}
\newcommand{\calVd}{\mathcal{V}^{(2)}(\Omega)}
\newcommand{\calVq}{\mathcal{V}^{(4)}(\Omega)}
\newcommand{\inod}{i\in\{1{:}d\}}
\newcommand{\uHHO}{^{\textsc{hho}}}
\newcommand{\mesh}{\mathcal{T}}
\newcommand{\ncdg}[1]{| \kern -.25mm \|{#1}| \kern -.25mm \|_{\rm DG}}
\renewcommand{\tilde}[1]{\widetilde{#1}}
\renewcommand{\hat}[1]{\widehat{#1}}
\newcommand*{\rom}[1]{\text{\expandafter\@slowromancap\romannumeral #1@}}
\newtheorem{corollary}{Corollary}[section]
\newtheorem{lemma}[corollary]{Lemma}
\newtheorem{theorem}[corollary]{Theorem}
\newtheorem{proposition}[corollary]{Proposition}
\newtheorem{definition}[corollary]{Definition}
\newtheorem{remark}[corollary]{Remark}
\newtheorem{example}[corollary]{Example}
\newtheorem{assumption}[corollary]{Assumption}
\newcommand{\qed}{ \vspace{-0.5cm} \hfill $\Box$ }
\newenvironment{proof}[1][Proof.]{\begin{trivlist}
		\item[\hskip \labelsep {\bfseries #1}]}{\end{trivlist}\qed}
\theoremstyle{thmstyletwo}%
\newtheorem{theorem}{Theorem}
\newtheorem{assumption}[theorem]{Assumption}
\newtheorem{lemma}[theorem]{Lemma}
\newtheorem{remark}{Remark}%
\numberwithin{equation}{section}
\begin{document}
\else

\begin{document}

\DOI{DOI HERE}
\copyrightyear{2021}
\vol{00}
\pubyear{2021}
\access{Advance Access Publication Date: Day Month Year}
\appnotes{Paper}
\copyrightstatement{Published by Oxford University Press on behalf of the Institute of Mathematics and its Applications. All rights reserved.}
\firstpage{1}


\title[$C^0$-HHO methods for biharmonic problems]{$C^0$-hybrid high-order methods for biharmonic problems}
\fi

\ifHAL
\title{$C^0$-hybrid high-order methods for biharmonic problems}

\author{
	Zhaonan Dong\thanks{
		Inria, 2 rue Simone Iff, 75589 Paris, France,
		and CERMICS, Ecole des Ponts, 77455 Marne-la-Vall\'{e}e, France.
		{\tt{zhaonan.dong@inria.fr}}.
	}
	\and Alexandre Ern\thanks{
		CERMICS, Ecole des Ponts, 77455 Marne-la-Vall\'{e}e, France,
		and  Inria, 2 rue Simone Iff, 75589 Paris, France.
		{\tt{alexandre.ern@enpc.fr}}.
}}
\date{\today}
\else

\author{Zhaonan Dong*
\address{\orgdiv{Inria}, \orgaddress{\street{2 rue Simone Iff}, \postcode{75589}, \state{Paris}, \country{France} } and  \\  \orgdiv{CERMICS}, \orgname{Ecole des Ponts}, \orgaddress{\postcode{77455}, \state{Marne-la-Vall\'{e}e}, \country{France} }}}
\author{Alexandre Ern*
	\address{\orgdiv{CERMICS}, \orgname{Ecole des Ponts},  \orgaddress{\postcode{77455}, \state{Marne-la-Vall\'{e}e}, \country{France} and \\ \orgdiv{Inria}, \orgaddress{\street{2 rue Simone Iff}, \postcode{75589}, \state{Paris}, \country{France} } }}}

\authormark{Author Name et al.}

\corresp[*]{Corresponding author: \href{zhaonan.dong@inria.fr}{zhaonan.dong@inria.fr},
\href{alexandre.ern@enpc.fr}{alexandre.ern@enpc.fr}}

\received{Date}{0}{Year}
\revised{Date}{0}{Year}
\accepted{Date}{0}{Year}


\fi

\ifHAL

\else

\abstract{We devise and analyze $C^0$-conforming hybrid high-order (HHO) methods to approximate biharmonic problems with either clamped or simply supported boundary conditions. $C^0$-conforming HHO methods hinge on cell unknowns which are $C^0$-conforming polynomials of order $(k+2)$ approximating the solution in the mesh cells and on face unknowns which are polynomials of order $k\ge0$ approximating the normal derivative of the solution on the mesh skeleton. Such methods deliver $O(h^{k+1})$ $H^2$-error estimates for smooth solutions. An important novelty in the error analysis is to lower the minimal regularity requirement on the exact solution. The technique to achieve this has a broader applicability than just $C^0$-conforming HHO methods, and to illustrate this point, we outline the error analysis for the well-known $C^0$-conforming interior penalty discontinuous Galerkin (IPDG) methods as well. The present technique does not require a $C^1$-smoother to evaluate the right-hand side in case of rough loads; loads in $W^{-1,q}$, $q>\frac{2d}{d+2}$, are covered, but not in $H^{-2}$.  Finally, numerical results including comparisons to various existing methods showcase the efficiency of the proposed $C^0$-conforming HHO methods.}

\keywords{Fourth-order PDEs; hybrid high-order method; low regularuty.}

\fi

\maketitle

\ifHAL

\abstract{We devise and analyze $C^0$-conforming hybrid high-order (HHO) methods to approximate biharmonic problems with either clamped or simply supported boundary conditions. $C^0$-conforming HHO methods hinge on cell unknowns which are $C^0$-conforming polynomials of order $(k+2)$ approximating the solution in the mesh cells and on face unknowns which are polynomials of order $k\ge0$ approximating the normal derivative of the solution on the mesh skeleton. Such methods deliver $O(h^{k+1})$ $H^2$-error estimates for smooth solutions. An important novelty in the error analysis is to lower the minimal regularity requirement on the exact solution. The technique to achieve this has a broader applicability than just $C^0$-conforming HHO methods, and to illustrate this point, we outline the error analysis for the well-known $C^0$-conforming interior penalty discontinuous Galerkin (IPDG) methods as well. The present technique does not require bubble functions or a $C^1$-smoother to evaluate the right-hand side in case of rough loads.  Finally, numerical results including comparisons to various existing methods showcase the efficiency of the proposed $C^0$-conforming HHO methods.}
\fi

\section{Introduction} \label{Introduction}

Biharmonic PDEs are used in the modelling of various physical phenomena, such as thin plate elasticity, micro-electromechanical systems, and phase separation, to mention a few examples. In the present work, we consider the following model problem with two types of boundary conditions (BC's):
\begin{equation}\label{pde}
	\Delta^2 u =f \quad \text{in $\Omega$}, \qquad
	\left\{\begin{alignedat}{2}
		u&= \partial_{n} u = 0  &\quad &\text{type } (\mbox{I}) \\
		u&= \partial_{nn} u = 0 &\quad &\text{type } (\mbox{II})
	\end{alignedat}\right.\quad \text{on $\partial \Omega$}.
\end{equation}
Here, $\Omega$ is a open bounded Lipschitz domain in $\mathbb{R}^d$, $d\in\{2,3\}$, with boundary $\partial \Omega$, $\partial_n $ denotes the normal derivative on $\partial \Omega$ and $\partial_{nn} $ denotes the normal-normal component of the Hessian  on $\partial \Omega$. In the context of plate modelling, type $(\mbox{I})$ BC is referred to as \emph{clamped} BC, and type $(\mbox{II})$ as \emph{simply supported} BC. Non-homogeneous BC's can be considered, but we focus on homogeneous BC's for simplicity. Instead, dealing with the boundary condition $\partial_{nnn}u=0$ requires further developments. The regularity of the source term $f$ is specified below in Assumption~\ref{ass:regularity}. We also observe that the present developments hinge on the weak formulation of \eqref{pde} involving the Hessian.

The goal of the present work is twofold. The first main goal is to devise and analyze a novel $C^0$-conforming approximation method for the above model problem. The proposed method belongs to the class of hybrid high-order (HHO) methods. These methods were introduced in \cite{DiPEL:14} for linear diffusion and in \cite{DiPEr:15} for locking-free linear elasticity. Since then, they have undergone a vigorous development, as reflected, e.g., in the two recent monographs \cite{di2020hybrid,cicuttin2021hybrid}. Moreover, as discussed in \cite{CoDPE:16,Bridging_HHO,HHOHMM}, HHO methods are closely related to hybridizable discontinuous Galerkin (HDG) methods, weak Galerkin (WG) methods, nonconforming virtual element methods (ncVEM), and multiscale hybrid-mixed (MHM) methods. Interestingly, the present $C^0$-conforming HHO method ($C^0$-HHO in short) can be viewed as a simple approach to extend $C^0$-finite element methods for biharmonic problems by simply adding an additional unknown attached to the mesh faces representing the normal derivative of the solution. $C^0$-HHO methods for the biharmonic problem have not yet been explored in the literature; we refer the reader to \cite{BoDPGK:18,DongErn2021biharmonic,DongErn2021singular} for fully discontinuous HHO methods. The starting point for devising the present $C^0$-HHO methods are the HHO methods from \cite{DongErn2021biharmonic}.

$C^0$-conforming approximation methods are popular to discretize biharmonic problems since such methods avoid the intricate construction of $C^1$-conforming approximation spaces while also avoiding severe conditioning issues that can arise with fully nonconforming approximation methods. Examples of $C^0$-conforming approximation methods from the literature include the classical Morley \cite{morley1968,WangXu:06} and Hsieh--Clough--Tocher (HCT) finite element methods, the $C^0$-interior penalty discontinuous Galerkin (IPDG) method \cite{EGHLMT:02,BrennerC0}, the $C^0$-weak Galerkin (WG) method \cite{MuWYZ:14,CheFe:16}, and the $C^0$-virtual element method (VEM) \cite{ZhChZ:16}. The Morley FEM and the HCT FEM for $d=3$ are lowest-order methods, whereas the HCT FEM for $d=2$ \cite{HCT} and the $C^0$-\{IPDG,WG,HHO,VEM\} for $d\ge2$ can reach arbitrary approximation order. $C^0$-IPDG attaches discrete unknowns to the mesh cells only, whereas $C^0$-\{WG,HHO\} attach discrete unknowns to the mesh cells and faces. A numerical comparison between $C^0$-IPDG and $C^0$-HHO is included herein, indicating the computational efficiency of the $C^0$-HHO approach. Concerning $C^0$-WG, we recall that the only relevant difference between HHO and WG lies in the choice of the discrete unknowns and the design of the stabilization operator. In general, WG employs plain least-squares stabilization, leading to suboptimal convergence rates, whereas HHO employs a more elaborate form of stabilization leading to optimal convergence rates. In other words, to achieve the same convergence rate, $C^0$-WG with plain least-squares stabilization requires more discrete unknowns than $C^0$-HHO methods. Furthermore, among the above $C^0$-conforming methods, $C^0$-VEM is currently the only one supporting general (polytopal) meshes. We notice that an interesting perspective to the present work in order to devise a $C^0$-HHO method on general meshes is to use $C^0$-VEM to build the cell basis functions, while keeping the current face basis functions to handle the normal derivative; this perspective is left to future work.
Finally, $C^0$-conforming HDG methods have been so far seldom considered in the literature, with the exception of a brief discussion in \cite{C0HDG}. Therefore, leveraging on \cite{CoDPE:16}, the present work can be viewed as a contribution to the development of $C^0$-HDG methods for the biharmonic problem.

The second main goal of the present work is to improve on the regularity requirement on the exact solution to lead the error analysis. The technique to achieve this is a non-trivial extension of ideas from \cite{ErnGuer2021} originally developed in the context of second-order elliptic problems (see also \cite[Chap. 40 \& 41]{Ern_Guermond_FEs_II_2021}). The result derived herein can be applied to the $C^0$-HHO method when approximating the biharmonic problem, but also, more broadly, to the other $C^0$-conforming approximation methods discussed above. 
Recall that the difficulty comes from the lack of $C^1$-conformity which causes some difficulties in the error analysis when it comes to bounding the consistency error. A first possibility is to use bubble functions together with a $C^1$-smoother when evaluating the right-hand side of the discrete problem, as shown in \cite{VZ2,VZ3} for the Morley element and the $C^0$-IPDG method (see also \cite{carstensen2021lowerorder} for further results concerning lowest-order methods). However, these techniques so far meet with difficulties when it comes to devising a $C^1$-smoother of arbitrary order on tetrahedral meshes. The alternative road, which is the one followed by most of the above works and also herein, is to require some (mild) additional regularity assumption on the source term beyond $H^{-2}(\Omega)$ and on the exact solution beyond $H^2(\Omega)$. The present work hinges on the following original assumption.
\begin{assumption}[Regularity]\label{ass:regularity}
	We assume that there are real numbers $p>2$ and $q\in (\frac{2d}{d+2},2]$ such that
	$u\in W^{2,p}(\Omega)$, $\Delta u\in W^{1,q}(\Omega)$, and $f\in W^{-1,q}(\Omega).$
\end{assumption}
Assumption~\ref{ass:regularity} is, to our knowledge, novel in the analysis of biharmonic problems. If one prefers to remain in the Hilbertian setting, a simpler, but less general, assumption is $u\in H^{2+s}(\Omega)$, with $s>0$, $\Delta u \in H^1(\Omega)$,
and $f\in H^{-1}(\Omega)$. Notice that the assumption $u\in H^{2+s}(\Omega)$, $s>0$, is still tighter than the one usually made in the literature to analyze $C^0$-conforming methods, which is $u\in H^{2+s}(\Omega)$, $s>\frac12$; the assumption $s>0$ is considered in \cite{carstensen2021lowerorder}.
Finally, we observe that in Assumption~\ref{ass:regularity}, the Laplacian of the solution is smoother than its Hessian. Since the Laplace operator contains a relatively large kernel, this assumption is reasonable.

The rest of this work is organized as follows. We present the key identities to bound the consistency error in Section \ref{sec: continuous_setting}. The results presented in this section have a wider outreach beyond $C^0$-HHO methods since they can be applied to other $C^0$-conforming methods (but not to fully nonconforming methods). Then, we devise $C^0$-HHO methods for both  types of BC's and establish stability and well-posedness of the discrete problems in Section \ref{sec:discrete_setting}.
In Section \ref{sec:error_analysis}, we perform the error analysis under Assumption \ref{ass:regularity}. For completeness, we also outline how the $C^0$-IPDG method can be analyzed under this assumption. Finally, we discuss our numerical results in Section \ref{sec: numerical examples}.

\section{Key identities to bound the consistency error}\label{sec: continuous_setting}

In this section, we introduce some basic notation, present the weak formulation for both
types of BC's, and derive the key identities to bound the consistency error. The main
result of this section, Lemma~\ref{Lemma:identity_II}, can be applied to many
$C^0$-conforming methods.

\subsection{Basic notation} \label{sec:notation}

We use standard notation for the Lebesgue and Sobolev spaces. In particular, for the fractional-order Sobolev spaces, we consider the Sobolev--Slobodeckij seminorm based on the double integral. For an open, bounded, Lipschitz set $S$ in $\mathbb{R}^d$, $d\in\{1,2,3\}$, we denote by $(v,w)_S$ the $L^2(S)$-inner product with appropriate Lebesgue measure, and we employ the same notation for vector- or matrix-valued fields (such fields are denoted using boldface notation). We denote by $\nabla w$ the (weak) gradient of $w$ and by $\nabla^2 w$ its (weak) Hessian. Let $\n_S$ be the unit outward normal vector on the boundary $\partial S$ of $S$. Assuming that the functions $v$ and $w$ are smooth enough, we have the following integration by parts formula:
\begin{equation} \label{eq:ipp1}
	(\Delta^2 v,w)_S = (\nabla^2v,\nabla^2w)_S+(\nabla\Delta v,\n w)_{\partial S}-(\nabla^2v\n,\nabla w)_{\partial S}.
\end{equation}
Whenever the context is unambiguous, we denote by $\partial_n$ the (scalar-valued) normal derivative on $\partial S$ and by $\partial_t$ the ($\RR^{d-1}$-valued) tangential derivative.
We also denote by $\partial_{nn} v$ the (scalar-valued) normal-normal second-order derivative, and by $\partial_{nt} v$ the ($\RR^{d-1}$-valued) normal-tangential second-order derivative. The integration by parts formula~\eqref{eq:ipp1} can then be rewritten as
\begin{equation} \label{eq:ipp2}
	(\Delta^2 v,w)_S = (\nabla^2v,\nabla^2w)_S+(\partial_n\Delta v,w)_{\partial S}-(\partial_{nn}v,\partial_n w)_{\partial S}-(\partial_{nt}v,\partial_t w)_{\partial S}.
\end{equation}
In what follows, the set $S$ is always a polytope so that its boundary can be decomposed into a finite union of planar faces with disjoint interiors. Expressions involving the tangential derivative on $\partial S$ are then implicitly understood to be evaluated as a summation over the faces composing $\partial S$.

Let $\{\mesh\}_{h>0}$ be a shape-regular family of simplicial meshes such that each mesh covers the domain $\Omega$ exactly. A generic mesh cell is denoted by $K\in\mesh$, its diameter by $h_K$, and its unit outward normal by $\n_K$.
We partition the boundary $\dK$ of any mesh cell $K\in\mesh$ by means of the two subsets $\dKi :=  \overline{\dK\cap \Omega}$ and $\dKb:=\dK\cap \partial\Omega$. The mesh faces are collected in the set $\Fall$, which is split as $\Fall=\Fint\cup\Fb$, where $\Fint$ is the collection of the interior faces (shared by two distinct mesh cells) and $\Fb$ the collection of the boundary faces. We orient every mesh interface $F\in\Fint$ by means of the fixed unit normal vector $\n_F$ whose direction is arbitrary but fixed once and for all, whereas we orient every mesh boundary face $F\in\Fb$ by means of the vector $\n_F:=\n_\Omega$. For any mesh cell $K\in\mesh$, the mesh faces composing its boundary $\partial K$ are collected in the set $\FK$, which is partitioned as $\FK=\FKi\cup \FKb$ with obvious notation.

For any real number $\theta\ge0$, we consider the broken Sobolev space
\begin{equation}
	H^\theta(\mesh): = \{v\in L^2(\Omega)\, |\, v_K:= {v}_{|K} \in H^\theta(K), \forall K\in \mesh \}.
\end{equation}
The jump and average of any function $v \in  H^{\theta}(\mesh)$, $\theta>\frac12$, across any mesh interface $F =  \dK_1 \cap  \dK_2\in \Fint$ are defined by setting $\jump{v}_F(x):= v_{|K_1}(x) - v_{|K_2}(x) $ and $\mean{v}_F(x):=  \frac12 (v_{|K_1}(x) + v_{|K_2}(x)) $ for a.e.~$x\in F$, respectively, where $K_1$ is such that its outward unit normal is $\n_F$.

\subsection{Weak formulations}

Recall from Assumption~\ref{ass:regularity} that $f\in W^{-1,q}(\Omega)$
with $q\in (\frac{2d}{d+2},2]$. Let $q'\in [2,\frac{2d}{d-2})$
be such that $\frac{1}{q}+\frac{1}{q'}=1$.
The weak formulation of the biharmonic problem
with type (I) BC's is as follows: Find  $u\upI\in H^2_0(\Omega)$ such that
\begin{equation}\label{weak_form_I}
	(\nabla^2 u\upI, \nabla^2 v )_\Omega = \ell(v) := \langle f,v \rangle_{W^{-1,q},W^{1,q'}_0}, \qquad  \forall v \in H^2_0(\Omega).
\end{equation}
Notice that the right-hand side is meaningful since the Sobolev embedding theorem implies that $H^2(\Omega)\hookrightarrow W^{1,q'}(\Omega)$. Moreover, the model problem~\eqref{weak_form_I} is well-posed owing to the Lax--Milgram lemma.

The weak formulation of the biharmonic problem with type (II) BC's is as follows:
Find $u\upII\in H^1_0(\Omega)\cap H^2(\Omega)$ such that
\begin{equation}\label{weak_form_II}
	(\nabla^2 u\upII, \nabla^2 v )_\Omega = \ell(v), \qquad  \forall v \in H^1_0(\Omega)\cap H^2(\Omega).
\end{equation}
Owing to the Lax--Milgram lemma, this problem is well-posed (recall that the $H^2$-seminorm defines a norm on $H^1_0(\Omega)\cap H^2(\Omega)$). We also observe that the weak formulations \eqref{weak_form_I} and \eqref{weak_form_II} employ the same bilinear form and the same right-hand side; only the trial and test spaces differ.

\subsection{Key identity under the classical regularity assumption}

Before stating our main result based on our new regularity assumption (Assumption~\ref{ass:regularity}), it is useful to illustrate the main idea under the classical regularity assumption $f\in H^{-2+s}(\Omega)$ and $u\in H^{2+s}(\Omega)$ with $s\in (\frac12,1]$.

\begin{lemma}[Key identity]\label{Lemma:identity_I}
	Let $s\in (\frac{1}{2}, 1]$. The following holds for all $v\in H^{2+s}(\Omega)$ and
	all $w\in H^2(\mesh) \cap H_0^{2-s}(\Omega)$:
	\begin{equation} \label{eq:identity_I}
		\langle \Delta^2 v,w \rangle_{H^{-2+s},H_0^{2-s}}
		= \su \Big\{ (\nabla^2 v, \nabla^2 {w}_K)_K
		- (\partial_{nn}  v , \partial_{n} w_K)_{\dK}\Big\}.
	\end{equation}
\end{lemma}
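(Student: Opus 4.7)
The plan is to establish the identity first for smooth $v\in C^\infty(\overline{\Omega})$ and then extend it to $v\in H^{2+s}(\Omega)$ by density. Starting from the cellwise integration by parts formula \eqref{eq:ipp2} applied on each $K\in\mesh$ and summing over cells yields
\begin{equation*}
(\Delta^2 v,w)_\Omega = \su\Big\{(\nabla^2 v,\nabla^2 w_K)_K + (\partial_n\Delta v,w)_{\dK} - (\partial_{nn}v,\partial_n w_K)_{\dK} - (\partial_{nt}v,\partial_t w_K)_{\dK}\Big\}.
\end{equation*}
The task thus reduces to showing that the two boundary contributions involving $\partial_n\Delta v$ and $\partial_{nt}v$ collapse, so as to leave exactly the right-hand side of \eqref{eq:identity_I}.

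To handle these terms, I would exploit the fact that $2-s\in[1,\tfrac{3}{2})$, so that $w\in H_0^{2-s}(\Omega)\subset H_0^1(\Omega)$. Regrouping the surface integrals per face, I observe that on any interior face $F=\partial K_1\cap\partial K_2$, the outward unit normals from the two sides are opposite. The two suspect contributions thus combine into face integrals of the form $(\partial_{n_F}\Delta v,\jump{w})_F$ and $(\partial_{n_F t}v,\jump{\partial_t w})_F$. Since $w$ is globally in $H^1(\Omega)$, its trace on $F$ is single-valued, so $\jump{w}_F=0$; moreover, cellwise $H^2$-regularity lets me identify $\partial_t w_K|_F$ with the surface gradient of the common face trace $w|_F\in H^{3/2}(F)$, so $\jump{\partial_t w}_F=0$ as well. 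On a boundary face, $w$ vanishes on $\partial\Omega$, so both integrands vanish there too. The remaining term $(\partial_{nn}v,\partial_n w_K)_{\dK}$ is the one kept in the identity, and it does not telescope since the normal derivative of $w$ may jump across interfaces.

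Finally, I would extend the identity by density: pick $v_\varepsilon\in C^\infty(\overline{\Omega})$ converging to $v$ in $H^{2+s}(\Omega)$. Then $\Delta^2 v_\varepsilon\to\Delta^2 v$ in $H^{-2+s}(\Omega)$, so the duality pairing against $w\in H_0^{2-s}(\Omega)$ converges; the volumetric Hessian term passes to the limit in $L^2$; and for the surface term, the trace theorem $H^s(K)\hookrightarrow H^{s-1/2}(\partial K)$ (valid since $s>\tfrac{1}{2}$) applied componentwise to $\nabla^2 v$ places $\partial_{nn}v$ in $H^{s-1/2}(\partial K)$, paired against $\partial_n w_K\in H^{1/2}(\partial K)$ coming from cellwise $H^2$-regularity of $w$. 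The main obstacle I expect is the rigorous justification of $\jump{\partial_t w}_F=0$ in this low-regularity setting, which relies on the identification of $\partial_t w_K|_F$ as a surface gradient of an object that has a single-valued trace on $F$; once this step is secured, the rest of the argument is a standard combination of trace estimates and the density of $C^\infty(\overline{\Omega})$ in $H^{2+s}(\Omega)$.
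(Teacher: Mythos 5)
Your proposal is correct and follows essentially the same route as the paper: cellwise integration by parts applied to a smooth approximation of $v$, cancellation of the $\partial_n\Delta v$ and $\partial_{nt}v$ boundary terms using the single-valuedness of $w$ and of its tangential derivative at interfaces together with the vanishing of $w$ on $\partial\Omega$, followed by passage to the limit. The only step you assert without justification, namely $\Delta^2 v_\varepsilon\to\Delta^2 v$ in $H^{-2+s}(\Omega)$, is exactly the point for which the paper invokes Grisvard's Theorem~1.4.4.6, so no genuine gap remains.
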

\begin{proof}
	The proof hinges on a density argument. We consider a mollified sequence of functions $v_\delta\in C^{\infty}(\overline{\Omega})$ such that $\lim_{\delta \rightarrow 0} v_\delta = v$ in $H^{2+s}(\Omega)$. We apply \eqref{eq:ipp2} to $v_{\delta|K}$ and $w_K:=w_{|K}$ for all $K\in\mesh$, and sum the result cellwise. This gives
	\[
	(\Delta^2 v_\delta,w)_\Omega = \su \Big\{ (\nabla^2 v_\delta, \nabla^2 {w}_K)_K
	+ (\partial_n\Delta v_\delta,w_K)_{\dK}
	- (\partial_{nn}  v_\delta , \partial_{n} w_K)_{\dK}
	- (\partial_{nt}  v_\delta , \partial_{t} w_K)_{\dK} \Big\}.
	\]
	The left-hand side is equal to $\langle \Delta^2 v_\delta,w \rangle_{H^{-2+s},H_0^{2-s}}$.
	Moreover, the second and fourth summations on the right-hand side vanish. For the second summation, this follows from the fact that $\partial_n\Delta v_\delta$ is single-valued at the mesh interfaces, whereas $w$ is single-valued at the mesh interfaces and vanishes at the mesh boundary faces. Similar arguments are invoked for the fourth summation. Altogether, we obtain
	\[
	\langle \Delta^2 v_\delta,w \rangle_{H^{-2+s},H_0^{2-s}}
	= \su \Big\{ (\nabla^2 v_\delta, \nabla^2 {w}_K)_K
	- (\partial_{nn}  v_\delta , \partial_{n} w_K)_{\dK} \Big\}.
	\]
	We can now pass to the limit $\delta\to0$. This is straightforward for the right-hand side,
	whereas for the left-hand side, we use \cite[Theorem 1.4.4.6]{Grisvard} which gives $\lim_{\delta \rightarrow 0} \Delta^2 v_{\delta} \rightarrow \Delta^2 v $ in $H^{-2+s}(\Omega)$. This proves
	the identity \eqref{eq:identity_I}.
\end{proof}

\begin{remark}[Literature]
	The identity from Lemma~\ref{Lemma:identity_I} is classical; see, e.g.,
	\cite{BrennerC0}. The present proof is, however, different. The advantages are
	that it avoids the decomposition of the solution into singular and regular parts and
	that it works seamlessly in any space dimension.
\end{remark}

\subsection{Key identity under the new regularity assumption}\label{sec:identity_II} 

Extending the identity from Lemma~\ref{Lemma:identity_I} to the more general
setting of Assumption~\ref{ass:regularity} requires giving a meaning to the trace of second-order derivatives on each mesh face individually. Before doing this, we briefly recall some material from \cite{ErnGuer2021} which was originally devised in the context of second-order elliptic problems.

Let $p$ and $q$ be two real numbers as in Assumption~\ref{ass:regularity}, i.e., $p>2$ and $q\in (\frac{2d}{d+2},2]$. Let $\varrho\in (2,p]$ be such that $q\ge \frac{\varrho d}{\varrho + d}$
(this is indeed possible since the function
$x\mapsto \frac{xd}{x+d}$ is increasing on $[2,\infty)$) and let $\varrho' \in [1,2)$
be such that $\frac{1}{\varrho}+\frac{1}{\varrho'}=1$. It is shown in \cite{ErnGuer2021} that for every mesh cell $K\in\mesh$, it is possible to give a meaning separately on every face $F\in\FK$ to the normal component of fields in
\begin{equation}
\bV\upd(K)\eqq \{\bsigma \in \bL^p(K)\st \DIV \bsigma\in L^q(K)\}.
\end{equation}
Specifically, one defines the operator $\gamma\upd_{K,F}:\bV\upd(K) \to
(W^{\frac{1}{\varrho},\varrho'}(F))'$ such that for all $\phi\in W^{\frac{1}{\varrho},\varrho'}(F)$,
\begin{equation} \label{Def: trace operator}
\langle \gamma_{K,F}\upd(\bsigma), {\phi} \rangle_F: = \int_K\Big( \bsigma \SCAL \nabla L_F^K(\phi) +(\nabla \SCAL \bsigma) L_F^K(\phi)  \Big)  \ud x,
\end{equation}
where  $L_F^K:W^{\frac{1}{\varrho},\varrho'}(F) \rightarrow W^{1,\varrho'}(K)$ is a face-to-cell lifting operator satisfying $L_K^F(\phi)_{|F} = \phi$ and $L_K^F(\phi)_{|\FK \backslash F} = 0$
(see \cite[Lemma 3.1]{ErnGuer2021}). Notice that $\gamma\upd_{K,F}(\bsigma)
= (\bsigma\SCAL\bn_K)_{|F}$ whenever the field $\bsigma$ is smooth.
Consider now the functional space
\begin{equation}\label{eq:calVd}
\calVd \eqq \{v\in W^{1,p}(\Omega)\st  \Delta v \in  L^q(\Omega)\}.
\end{equation}
(The superscript refers to the context of second-order PDEs.)
We have $(\nabla v)_{|K}\in \bV\upd(K)$
for all $v\in \calVd$ and all $K\in\mesh$. Moreover, since $\rho>2$, we have $(w_K)_{|F}\in W^{\frac{1}{\varrho},\varrho'}(F)$ for all $w\in H^1(\mesh)$, all $K\in\mesh$, and all $F\in\FK$. Therefore, it is meaningful to define the
following bilinear form for all $v\in \calVd$ and all $w\in H^1(\mesh)$:
\begin{align} \label{eq:def_n_sharp_2}
n_\sharp^{(2)}(v,w):= \su \Big\{ \sum_{F\in \FKi} \langle \gamma_{K,F}\upd( \nabla v), w_{K|F} - \mean{w}_{F}\rangle_F + \sum_{F\in \FKb} \langle \gamma_{K,F}\upd( \nabla v), w_{K|F}\rangle_F \Big\}.
\end{align}
Since $w_{K|F} - \mean{w}_{F} = \frac12(\n_F {\cdot}\n_K)\jump{w_h}_F$ for all $F\in\FKi$, we observe that the bilinear form defined in~\eqref{eq:def_n_sharp_2} coincides with the one defined in \cite[Equ.~(3.12)]{ErnGuer2021} upon setting therein $\theta_{K,F}=\frac12$ if $F\in\FKi$ and $\theta_{K,F}=1$ otherwise. Hence, invoking \cite[Lemma~3.3]{ErnGuer2021}, we infer that for all $v\in \calVd$ and all $w\in H^1(\mesh)$, the following important relation holds:
\begin{align}\label{eq:identity_2}
-(\Delta v, w)_{\Omega} = \su (\nabla v,\nabla w_K)_{K} - n_\sharp^{(2)}(v, w).
\end{align}

We are now ready to address the biharmonic problem. We define the functional space
\begin{equation}\label{eq:calVq}
\calVq \eqq \{v\in W^{2,p}(\Omega)\st \Delta v \in  W^{1,q}(\Omega)\}.
\end{equation}
(The superscript refers to the context of fourth-order PDEs.)
We also define the following bilinear form for all $v\in\calVq$ and all $w\in H^2(\mesh)$:
\begin{equation} \label{eq:def_n_sharp}
n_\sharp^{(4)}(v,w) := \sum_{\inod} n_\sharp^{(2)}(\partial_i v,\partial_i w).
\end{equation}
This definition is meaningful since for all $v\in\calVq$ and all $w\in H^2(\mesh)$,
we have $\partial_i v\in \calVd$ and $\partial_i w\in H^1(\mesh)$, for all
$\inod$.

\begin{lemma}[Key identity]\label{Lemma:identity_II}
The following holds for all $v\in \calVq$ and all
$w\in H^2(\mesh) \cap H^1_0(\Omega)$:
\begin{equation} \label{eq:identity_II}
	\langle \Delta^2 v,w \rangle_{W^{-1,q},W_0^{1,q'}}
	= \su (\nabla^2 v, \nabla^2 {w}_K)_K
	- n_\sharp^{(4)}(v,w).
\end{equation}
\end{lemma}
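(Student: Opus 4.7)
The plan is to reduce the fourth-order identity to a componentwise application of the second-order identity \eqref{eq:identity_2}, mirroring the decomposition implicit in the definition \eqref{eq:def_n_sharp} of $n_\sharp^{(4)}$. First, I would rewrite the cellwise Hessian product as
\begin{equation*}
(\nabla^2 v, \nabla^2 w_K)_K = \sum_{\inod} (\nabla \partial_i v, \nabla \partial_i w_K)_K,
\end{equation*}
since $(\nabla^2 v : \nabla^2 w_K) = \sum_{i,j} \partial_{ij} v \, \partial_{ij} w_K = \sum_i \nabla \partial_i v \SCAL \nabla \partial_i w_K$.

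Next, I would apply \eqref{eq:identity_2} with $v \leftarrow \partial_i v$ and $w \leftarrow \partial_i w$ for each $\inod$. The requisite regularity holds under Assumption~\ref{ass:regularity}: since $v \in W^{2,p}(\Omega)$, we have $\partial_i v \in W^{1,p}(\Omega)$; since $\Delta v \in W^{1,q}(\Omega)$, we have $\Delta \partial_i v = \partial_i \Delta v \in L^q(\Omega)$; hence $\partial_i v \in \calVd$. Similarly, $w \in H^2(\mesh)$ gives $\partial_i w \in H^1(\mesh)$. Summing the $d$ instances of \eqref{eq:identity_2} and using the definition \eqref{eq:def_n_sharp} of $n_\sharp^{(4)}$ yields
\begin{equation*}
\su (\nabla^2 v, \nabla^2 w_K)_K - n_\sharp^{(4)}(v,w)
= -\sum_{\inod} (\Delta \partial_i v, \partial_i w)_\Omega
= -(\nabla \Delta v, \nabla w)_\Omega,
\end{equation*}
where the last equality uses the commutation $\Delta \partial_i v = \partial_i \Delta v$ valid in $L^q(\Omega)$.

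It then remains to identify $-(\nabla \Delta v, \nabla w)_\Omega$ with the duality pairing on the left-hand side of \eqref{eq:identity_II}. Since $\Delta v \in W^{1,q}(\Omega)$, the field $\nabla \Delta v$ belongs to $\bL^q(\Omega)$ and $\Delta^2 v = \DIV(\nabla \Delta v) \in W^{-1,q}(\Omega)$ with the integration-by-parts representation $\langle \Delta^2 v,\phi\rangle_{W^{-1,q},W_0^{1,q'}} = -(\nabla \Delta v,\nabla \phi)_\Omega$ for every $\phi \in W^{1,q'}_0(\Omega)$. To apply this to $w$, I need $w \in W^{1,q'}_0(\Omega)$: this is a consequence of $w \in H^1_0(\Omega)$ together with the piecewise $H^2$-regularity, since on each cell $\nabla w_K \in \bH^1(K) \hookrightarrow \bL^{q'}(K)$ by the Sobolev embedding, valid because $q' \le \frac{2d}{d-2}$ under Assumption~\ref{ass:regularity}; globally, the piecewise gradient coincides with the weak gradient of $w$ thanks to $w \in H^1(\Omega)$.

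The main obstacle is not technical depth but bookkeeping of function spaces: one must ensure that each differentiation step keeps the fields within the spaces where the second-order identity \eqref{eq:identity_2} and the duality pairing $\langle\cdot,\cdot\rangle_{W^{-1,q},W_0^{1,q'}}$ are legitimate. This is precisely why Assumption~\ref{ass:regularity} requires both $W^{2,p}$-regularity on $v$ (to ensure $\partial_i v \in W^{1,p}$, with $p>2$, so that the face traces from \cite{ErnGuer2021} are defined) \emph{and} the additional smoothness $\Delta v \in W^{1,q}(\Omega)$ (to ensure $\Delta \partial_i v \in L^q$, which is what makes the second-order identity applicable and, subsequently, allows the right-hand side to be interpreted as a well-defined duality pairing on $W^{-1,q}$).
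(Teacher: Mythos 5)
Your proof is correct and follows essentially the same route as the paper: reduce to the componentwise second-order identity \eqref{eq:identity_2} with $\partial_i v\in\calVd$ and $\partial_i w\in H^1(\mesh)$, and represent $\langle \Delta^2 v,w\rangle_{W^{-1,q},W_0^{1,q'}}$ as $-(\nabla\Delta v,\nabla w)_\Omega$; the paper merely runs the computation from the left-hand side to the right-hand side, whereas you go the other way, and you are slightly more explicit about why $w\in W^{1,q'}_0(\Omega)$.
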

\begin{proof}
Since $\Delta v\in W^{1,q}(\Omega)$, we have $\Delta^2v =\Delta(\Delta v)\in W^{-1,q}(\Omega)$. Moreover, since $w\in H^1_0(\Omega)$, we infer that
$$
\langle \Delta^2 v,w \rangle_{W^{-1,q},W_0^{1,q^\prime}}
=  -(\nabla \Delta v, \nabla {w})_{\Omega} =
\sum_{\inod} - (\Delta \partial_i v,\partial_i w)_\Omega.
$$
For all $\inod$, we have $\partial_iv\in\calVd$ and $\partial_iw\in H^1(\mesh)$.
Applying \eqref{eq:identity_2} to the right-hand side of the above equation, we obtain
$$
\langle \Delta^2 v,w \rangle_{W^{-1,q},W_0^{1,q^\prime}}= \sum_{\inod}
\su (\nabla\partial_i v,\nabla \partial_i w_K)_K -  \sum_{\inod} n_\sharp^{(2)}(\partial_i v,\partial_i w).
$$
The conclusion is straightforward by definition of the Hessian and of $n_\sharp^{(4)}$.
\end{proof}

The following reformulation of $n_\sharp^{(4)}$ will be useful in our analysis.
Let us set $n_{F,i}:=\n_F\SCAL\vece_i$ for all $\inod$, where
$(\vece_i)_{\inod}$ denotes the canonical Cartesian basis of $\mathbb{R}^d$.

\begin{lemma}[Reformulation of $n_\sharp^{(4)}$]
For all $v\in \calVq$ and all $w\in H^2(\mesh) \cap H^1_0(\Omega)$, we have
\begin{equation} \label{eq:def_n_sharp_1} \begin{aligned}
		n_\sharp^{(4)}(v, w)
		=  \su \sum_{\inod} \Big\{ & \sum_{F\in \FKi} \langle \gamma_{K,F}\upd( \nabla \partial_{i}v),  n_{F,i} \n_F\SCAL((\nabla w_K)_{|F}  -\mean{  \nabla w}_F) \rangle_F \\
		& + \sum_{F\in\FKb} \langle \gamma_{K,F}\upd( \nabla \partial_{i}v),  n_{F,i} \n_F\SCAL(\nabla w_K)_{|F}\rangle_F \Big\}.
\end{aligned}\end{equation}
\end{lemma}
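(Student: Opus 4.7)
The plan is to derive \eqref{eq:def_n_sharp_1} from the definitions \eqref{eq:def_n_sharp} and \eqref{eq:def_n_sharp_2} by proving two pointwise scalar identities on each mesh face and then interchanging $\sum_{\inod}$ with the outer sums. The identities I need are
\begin{equation*}
(\partial_i w_K)_{|F} - \mean{\partial_i w}_F = n_{F,i}\,\n_F\SCAL\big((\nabla w_K)_{|F}-\mean{\nabla w}_F\big)
\end{equation*}
for every $F\in\FKi$, and $(\partial_i w_K)_{|F} = n_{F,i}\,\n_F\SCAL(\nabla w_K)_{|F}$ for every $F\in\FKb$. Once these are in hand, I substitute them into $n_\sharp^{(2)}(\partial_i v,\partial_i w)$ in the scalar slot of each duality pairing with $\gamma_{K,F}\upd(\nabla\partial_i v)$ and swap the sum $\sum_{\inod}$ with the outer sums over $K\in\mesh$ and $F\in\FK$, which matches \eqref{eq:def_n_sharp_1} term by term.

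For interior faces, I would use that $w\in H^1_0(\Omega)\cap H^2(\mesh)\subset H^1(\Omega)$, so $\jump{w}_F = 0$ and hence the tangential part of $\jump{\nabla w}_F$ vanishes (both one-sided tangential traces of $\nabla w$ coincide with the surface gradient of the common trace of $w$ on $F$). Consequently, $\jump{\nabla w}_F = (\n_F\SCAL\jump{\nabla w}_F)\,\n_F$, and taking the $i$-th Cartesian component gives $\jump{\partial_i w}_F = n_{F,i}\,(\n_F\SCAL\jump{\nabla w}_F)$. Combining with the elementary relation $(\varphi_K)_{|F}-\mean{\varphi}_F = \tfrac{1}{2}(\n_F\SCAL\n_K)\jump{\varphi}_F$, applied once with $\varphi = \partial_i w$ and once with $\varphi = \nabla w$, produces the interior-face identity. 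For boundary faces, the condition $w = 0$ on $\partial\Omega$ forces the trace of $w_K$ on $F\in\FKb$ to be zero, so its tangential derivative along $F$ vanishes and $(\nabla w_K)_{|F}$ is colinear with $\n_F$; projecting onto $\vece_i$ then yields the boundary-face identity.

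The only slightly delicate point is the tangential-continuity statement used above, namely that for $w\in H^1(\Omega)\cap H^2(\mesh)$ the tangential trace of $\nabla w$ is single-valued across every interior face. A clean way to settle this is to note that for $w\in H^2(K)$, the tangential component of $(\nabla w)_{|\partial K}$ equals the surface gradient of $w_{|\partial K}$, which depends only on the trace $w_{|F}$, and this trace is single-valued because $w\in H^1(\Omega)$. With this in place the remainder of the argument is purely algebraic bookkeeping.
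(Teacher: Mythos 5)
Your argument is correct, but it proves the identity by a mechanism that is genuinely different from the paper's. The paper also starts by splitting $(\partial_i w_K)_{|F}$ into its normal and tangential parts, but it does \emph{not} claim a pointwise identity of the scalar slots: it keeps the (generally nonzero) single-valued tangential contribution $\vece_i\SCAL(\nabla_t w_K)_{|F}$ and kills it only after reorganizing the sums over interfaces, invoking the two-sided cancellation property \eqref{eq:n_sharp_Fi} of the trace operators $\gamma_{K,F}\upd(\nabla\partial_i v)$ -- a property of $v$, resting on the global integrability of $\DIV(\nabla\partial_i v)=\partial_i\Delta v$ for $v\in\calVq$. You instead show that the scalar arguments themselves coincide a.e.\ on every face: on $F\in\FKi$ the half-jump relation plus the vanishing of the tangential part of $\jump{\nabla w}_F$ (a consequence of $\jump{w}_F=0$) gives $(\partial_i w_K)_{|F}-\mean{\partial_i w}_F = n_{F,i}\,\n_F\SCAL((\nabla w_K)_{|F}-\mean{\nabla w}_F)$, and on $F\in\FKb$ the vanishing trace of $w$ gives $(\partial_i w_K)_{|F}=n_{F,i}\,\n_F\SCAL(\nabla w_K)_{|F}$; the reformulation \eqref{eq:def_n_sharp_1} then follows term by term from \eqref{eq:def_n_sharp}, with no use of \eqref{eq:n_sharp_Fi} and no interface reorganization. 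What your route buys is that it only uses the $H^1_0$-conformity of $w$ and the fact that the cellwise traces $\gamma_{K,F}\upd(\nabla\partial_i v)$ are well defined, so the equality of \eqref{eq:def_n_sharp} and \eqref{eq:def_n_sharp_1} extends verbatim to first arguments in $\calVq+\mathbb{P}_{k+2}^{\rm g}(\mesh)$, where \eqref{eq:n_sharp_Fi} fails; this makes transparent that only the simplified forms \eqref{eq:n_sharp_simplif} are lost for piecewise-smooth $v$, which is exactly the caveat behind the definition of $\hat{n}_\sharp^{(4)}$ in \eqref{eq:hat_n_sharp}. What the paper's route buys is that it stays within the trace-cancellation framework of \eqref{eq:n_sharp_Fi} inherited from the second-order setting, which is reused elsewhere in the analysis. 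One cosmetic point: to pair the rewritten slots with $\gamma_{K,F}\upd$ you should note (as you implicitly do) that $n_{F,i}\,\n_F\SCAL(\nabla w_K)_{|F}$ is still in $W^{\frac1\varrho,\varrho'}(F)$, which is clear since $\n_F$ is constant on the planar face $F$.
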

\begin{proof}
We first use the definitions of $n_\sharp^{(2)}$ and $n_\sharp^{(4)}$ to write
\[
n_\sharp^{(4)}(v, w)
=  \su \sum_{\inod} \Big\{ \sum_{F\in \FKi} \langle \gamma_{K,F}\upd( \nabla \partial_{i}v),
(\partial_i w_K)_{|F}- \mean{\partial_iw}_F \rangle_F
+ \sum_{F\in\FKb} \langle \gamma_{K,F}\upd( \nabla \partial_{i}v),
(\partial_i w_K)_{|F} \rangle_F \Big\}.
\]
We observe that for all $K\in\mesh$ and all $F\in\FK$,
\[
(\partial_i w_K)_{|F} = \vece_i\SCAL(\nabla w_K)_{|F} = n_{F,i}\n_F\SCAL(\nabla w_K)_{|F} + \vece_i\SCAL(\nabla_t w_K)_{|F},
\]
where $(\nabla_t w_K)_{|F}$ denotes the tangential gradient of $w_K$ on $F$. Since $w\in H^1_0(\Omega)$, this quantity is single-valued on every mesh interface $F\in\Fint$ and vanishes on every mesh boundary face $F\in\Fb$. Moreover, since $\nabla \partial_i v$ has an integrable divergence on $\Omega$ by assumption, we infer that for all $F=\partial K_1 \cap \partial K_2 \in \Fint$,
\begin{equation}\label{eq:n_sharp_Fi}
	\sum_{j\in\{1,2\}} \langle \gamma_{K_j,F}\upd(\nabla\partial_i v), \phi \rangle_F = 0,
	\qquad \forall \phi \in W^{\frac{1}{\varrho},\varrho'}(F).
\end{equation}
This implies that
\[
\su \sum_{F\in \FK} \langle \gamma_{K,F}\upd( \nabla \partial_{i}v),(\partial_i w_K)_{|F} \rangle_F =
\su \sum_{F\in \FK} \langle \gamma_{K,F}\upd( \nabla \partial_{i}v),  n_{F,i} \n_F\SCAL(\nabla w_K)_{|F} \rangle_F.
\]
A similar reasoning shows that
\begin{equation} \label{eq:manip}
	\su \sum_{F\in \FKi} \langle \gamma_{K,F}\upd( \nabla \partial_{i}v), \mean{\partial_iw}_F \rangle_F =
	\su \sum_{F\in \FKi} \langle \gamma_{K,F}\upd( \nabla \partial_{i}v),  n_{F,i} \n_F\SCAL\mean{\nabla w} \rangle_F.
\end{equation}
Combining the two above identities proves the assertion.
\end{proof}

\begin{remark}[Simplifications]
Both terms in~\eqref{eq:manip} actually vanish owing to~\eqref{eq:n_sharp_Fi}. This means that for all $v\in\calVq$ and all $w\in H^2(\mesh)$, we have
\begin{subequations} \label{eq:n_sharp_simplif}
	\begin{equation}
		n_\sharp^{(4)}(v, w)
		=  \su \sum_{F\in \FK} \sum_{\inod} \langle \gamma_{K,F}\upd( \nabla \partial_{i}v), (\partial_i w_K)_{|F} \rangle_F,
	\end{equation}
	and whenever $w\in H^2(\mesh)\cap H^1_0(\Omega)$, we also have
	\begin{equation}
		n_\sharp^{(4)}(v, w)
		=  \su \sum_{F\in \FK} \sum_{\inod} \langle \gamma_{K,F}\upd( \nabla \partial_{i}v),  n_{F,i} \n_F\SCAL(\nabla w_K)_{|F}  \rangle_F.
\end{equation} \end{subequations}
However, the expressions~\eqref{eq:def_n_sharp} and~\eqref{eq:def_n_sharp_1} are those that are needed in this work because we will extend the domain of the first argument $v$ to spaces containing functions which are only piecewise smooth and for which \eqref{eq:n_sharp_Fi} no longer holds true. In this situation, it is important to use \eqref{eq:def_n_sharp} and~\eqref{eq:def_n_sharp_1} and not \eqref{eq:n_sharp_simplif}.
\end{remark}

\section{Discrete setting for $C^0$-HHO methods} \label{sec:discrete_setting}

In this section, we introduce the key ingredients to formulate $C^0$-HHO methods to approximate the biharmonic problem with both types of BC's. The starting point for the present $C^0$-HHO methods are the fully discontinuous HHO methods from \cite{DongErn2021biharmonic}. While the latter methods rely on a triple of discrete unknowns, approximating the solution in each mesh cell, its trace on each mesh face, and the trace of its normal derivative (oriented by $\n_F$) on each mesh face, the present $C^0$-HHO methods only rely on a pair of discrete unknowns, approximating the solution in each mesh cell and the trace of its normal derivative on each mesh face.

\subsection{Local reconstruction, stabilization, and stability}\label{sec:local}

Let $k\geq 0$ be the polynomial degree. Recall that we consider a mesh $\mesh$
from a shape-regular family of simplical meshes such that $\mesh$ covers $\Omega$ exactly.
For every mesh cell $K\in \mesh$, the local $C^0$-HHO space is
\begin{equation} \label{eq:def_fesE}
	\fesE: =\mathbb{P}_{k+2}(K) \times \mathbb{P}_{k}(\FK),
\end{equation}
with the broken polynomial space $\mathbb{P}_{k}(\FK) := \times_{F\in\FK} \mathbb{P}_k(F)$.
A generic element in  $\fesE$ is denoted $\hat{v}_K = (v_K,  \gamma_{\partial K})$ with $v_K \in \mathbb{P}_{k+2}(K)$ and  $\gamma_{\partial K} \in \mathbb{P}_{k}(\FK)$. The first component of $\hat{v}_K$ aims at representing the solution inside the
mesh cell and the second the trace of its normal derivative (oriented by $\n_K$) on the cell boundary.

Let $K\in\mesh$.
We define the local reconstruction operator $R_K: \fesE \rightarrow \mathbb{P}_{k+2}(K)$ such that, for all $\hat{v}_K \in \fesE$ with $\hat{v}_K:= (v_K, \gamma_{\partial K})$, the polynomial
$R_K (\hat{v}_K)\in \mathbb{P}_{k+2}(K)$ is uniquely defined by solving the following problem with test functions $w \in \mathbb{P}_{k+2}(K)^\perp:=\{w\in \mathbb{P}_{k+2}(K)\;|\; (w,\xi)_K=0, \forall \xi \in \mathbb{P}_{1}(K) \}$:
\begin{align}
	(\nabla^2 R_K(\hat{v}_K), \nabla^2 w)_{K}
	={}&	(\nabla^2  {v}_K, \nabla^2 w)_{K}
	- (\partial_n v_K - \gamma_{\partial K},  \partial_{nn}  w)_{\dK}, \label{eq: reconstruction}
\end{align}
together with the condition $(R_K (\hat{v}_K),\xi)_K = (v_K,\xi)_K$ for all $\xi\in \mathbb{P}_{1}(K)$.
The local stabilization bilinear form is defined such that, for all $(\hat{v}_K, \hat{w}_K)\in \fesE \times \fesE$, with  $\hat{v}_K:= (v_K, \gamma_{\partial K})$ and $\hat{w}_K:= (w_K, \chi_{\partial K})$,
\begin{equation}\label{def: stabilisation}
	\begin{aligned}
		S_{\dK}(\hat{v}_K,\hat{w}_K)
		:={}&   h_K^{-1}
		\big( \Pi^{k}_{\dK}(\gamma_{\partial K}- \partial_n {v}_K),
		\chi_{\partial K}- \partial_n {w}_K\big)_{\dK},
	\end{aligned}
\end{equation}
where $\Pi^k_{\dK}$ denotes the $L^2$-orthogonal projection onto $\mathbb{P}_k(\FK)$.

We define the local bilinear form ${a}_K$ on $\fesE \times \fesE$ such that
\begin{equation}\label{local bilinear form}
	{a}_K(\hat{v}_K,\hat{w}_K)
	:= (\nabla^2 R_K (\hat{v}_K),\nabla^2 R_K (\hat{w}_K))_{K}
	+S_{\dK}(\hat{v}_K,\hat{w}_K).
\end{equation}
We define the local energy seminorm such that, for all $\hat{v}_K:=(v_K,\gamma_{\partial K}) \in \fesE$,
\begin{align}
	|\hat{v}_K|^2_{\fesE}: ={} & \|\nabla^2 v_K\|_{K}^2
	+ h_K^{-1} \|\gamma_{\partial K} - \partial_n v_K\|_{\dK}^2. \label{H2_seminorm_elem}
\end{align}

\begin{lemma}[Local stability and boundedness] \label{lem: stability and boundedness}
	There is a real number $\alpha>0$, depending only on the mesh shape-regularity and the polynomial degree $k$, such that, for all $h>0$, all $K \in \mesh$, and all $\hat{v}_K\in \fesE$,
	\begin{equation}\label{local equivalent}
		\alpha|\hat{v}_K|^2_{\fesE}
		\leq  \|\nabla^2 R_K(\hat{v}_K) \|_{K}^2
		+ S_{\dK}(\hat{v}_K,\hat{v}_K)
		\leq 	\alpha^{-1} |\hat{v}_K|^2_{\fesE}.
	\end{equation}
\end{lemma}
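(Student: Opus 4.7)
The strategy is to prove the two inequalities separately, using the reconstruction equation~\eqref{eq: reconstruction} with two different choices of test function together with a polynomial trace inequality and elementary projection estimates.

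For the boundedness inequality (upper bound), I would start by testing~\eqref{eq: reconstruction} with the particular function $w := R_K(\hat v_K) - \Pi_K^1 R_K(\hat v_K)$, which lies in $\mathbb{P}_{k+2}(K)^\perp$ by construction of the $L^2$-projection onto $\mathbb{P}_1(K)$, and whose Hessian equals $\nabla^2 R_K(\hat v_K)$. Applying Cauchy--Schwarz to the right-hand side, together with the standard discrete trace inequality $\|\partial_{nn}w\|_{\dK} \lesssim h_K^{-1/2}\|\nabla^2 w\|_K$ valid for polynomials, yields
\[
\|\nabla^2 R_K(\hat v_K)\|_K \lesssim \|\nabla^2 v_K\|_K + h_K^{-1/2}\|\partial_n v_K - \gamma_{\partial K}\|_{\dK}.
\]
Combining this with the trivial bound $S_{\dK}(\hat v_K,\hat v_K)\le h_K^{-1}\|\gamma_{\partial K}-\partial_n v_K\|_{\dK}^2$, which follows from the contractivity of $\Pi^k_{\dK}$, gives the desired upper bound.

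For the stability inequality (lower bound) the plan is twofold. First, I would test~\eqref{eq: reconstruction} this time with $w := v_K - \Pi_K^1 v_K \in \mathbb{P}_{k+2}(K)^\perp$, so that $\nabla^2 w = \nabla^2 v_K$. The crucial observation is that $\partial_{nn} v_K|_F \in \mathbb{P}_k(F)$ for every $F\in\FK$ because $v_K\in\mathbb{P}_{k+2}(K)$, so the boundary term can be rewritten as $(\Pi^k_{\dK}(\partial_n v_K-\gamma_{\partial K}),\partial_{nn}v_K)_{\dK}$, producing the stabilization norm on the right. Cauchy--Schwarz and the same discrete trace inequality then deliver
\[
\|\nabla^2 v_K\|_K \lesssim \|\nabla^2 R_K(\hat v_K)\|_K + S_{\dK}(\hat v_K,\hat v_K)^{1/2}.
\]

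It remains to recover the face seminorm. Using Pythagoras with respect to $\Pi^k_{\dK}$ and the fact that $\gamma_{\partial K}\in\mathbb{P}_k(\FK)$, one has
\[
h_K^{-1}\|\gamma_{\partial K}-\partial_n v_K\|_{\dK}^2 = S_{\dK}(\hat v_K,\hat v_K) + h_K^{-1}\|(I-\Pi^k_{\dK})\partial_n v_K\|_{\dK}^2.
\]
The second term is the main technical obstacle: since $\partial_n v_K|_F \in \mathbb{P}_{k+1}(F)$, I would combine a Poincar\'e--Wirtinger inequality on the reference face (bounding the projection error by the tangential gradient, since $\mathbb{P}_0\subset \mathbb{P}_k$) with scaling to obtain $\|(I-\Pi_F^k)\partial_n v_K\|_F \lesssim h_F\|\nabla_F\partial_n v_K\|_F$. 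Because $\n_K$ is constant on the planar face $F$, the tangential gradient of the normal derivative is a component of the full Hessian, so a discrete trace inequality gives $\|\nabla_F\partial_n v_K\|_F \lesssim h_K^{-1/2}\|\nabla^2 v_K\|_K$. Summing over $F\in\FK$ and using $h_F\lesssim h_K$ yields $h_K^{-1}\|(I-\Pi^k_{\dK})\partial_n v_K\|_{\dK}^2 \lesssim \|\nabla^2 v_K\|_K^2$. Plugging the previous bound on $\|\nabla^2 v_K\|_K^2$ into this estimate closes the argument and yields $|\hat v_K|_{\fesE}^2 \lesssim \|\nabla^2 R_K(\hat v_K)\|_K^2 + S_{\dK}(\hat v_K,\hat v_K)$. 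The main subtlety throughout is tracking how the polynomial degrees interact with $\Pi^k_{\dK}$: on the upper-bound side, the stabilization merely dominates the full jump, whereas on the lower-bound side the key is that $\partial_{nn}v_K$ is already in $\mathbb{P}_k$ on faces, while the missing information on $(I-\Pi^k_{\dK})\partial_n v_K$ is controlled by $\|\nabla^2 v_K\|_K$ through scaling.
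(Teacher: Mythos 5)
Your proof is correct and follows essentially the same route as the paper, which simply invokes the argument of Lemma~4.1 in the cited HHO paper for biharmonic problems: boundedness by testing \eqref{eq: reconstruction} with $R_K(\hat v_K)$ (up to its affine part) plus a discrete trace inequality, and stability by testing with $v_K-\Pi^1_K v_K$ while exploiting that $\partial_{nn}v_K|_F\in\mathbb{P}_k(F)$ to insert $\Pi^k_{\dK}$. Your additional step recovering the unprojected face seminorm---Pythagoras with $\gamma_{\partial K}\in\mathbb{P}_k(\FK)$, a face Poincar\'e--Wirtinger bound for $(I-\Pi^k_{\dK})\partial_n v_K$, and the trace inverse inequality to land on $\|\nabla^2 v_K\|_K$---is exactly the point that makes the equivalence with $|\hat v_K|_{\fesE}$ work, and it is carried out correctly.
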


\begin{proof}
	The proof proceeds as that of \cite[Lemma~4.1]{DongErn2021biharmonic}.
\end{proof}

\begin{remark}[Cell unknowns]
The choice of the polynomial space $\mathbb{P}_{k+2}(K)$ leverages on \cite{DongErn2021biharmonic} and has been made to allow for the simple stabilization bilinear form defined in~\eqref{def: stabilisation}. Alternative choices for the cell unknowns are possible, such as taking the polynomial space $\mathbb{P}_{k+1}(K)$ (see \cite[Remark~3.2]{DongErn2021biharmonic}) and even $\mathbb{P}_{k}(K)$ for $k\ge1$ (see \cite{BoDPGK:18}). Both choices lead to fewer cell unknowns, which can marginally alleviate the costs of static condensation, but require a more subtle form for stabilization involving the reconstruction operator, and this typically adds some computational costs. Moreover, whatever the choice for the cell unknowns, the size of the linear system after static condensation is the same.
\end{remark}

\subsection{Global discrete spaces and discrete problems}

Let $\mathbb{P}_{k+2}^{\rm g}(\mesh)$ denote the usual $C^0$-conforming finite element space  composed of continuous, piecewise polynomials of degree at most $(k+2)$ on the mesh $\mesh$ (the superscript ${}^{\rm g}$ refers to the integrability of the gradient of functions in $\mathbb{P}_{k+2}^{\rm g}(\mesh)$).
We set
\begin{equation}
	\fes := \mathbb{P}_{k+2}^{\rm g}(\mesh) \times V_{\Fall}^k, \qquad
	V_{\Fall}^k:=\mathbb{P}_k(\Fall),
\end{equation}
with the broken polynomial space $\mathbb{P}_k(\Fall):=\times_{F\in\Fall}\mathbb{P}_k(F)$.
We also define the following subspaces accounting for (homogeneous) boundary conditions:
\begin{equation}
	V_{\mesh,0}^{k+2} := \mathbb{P}_{k+2}^{\rm g}(\mesh)\cap H^1_0(\Omega),
	\qquad
	V_{\Fall,0}^k:=\{\gamma_{\Fall}\in V_{\Fall}^k\;|\; \gamma_{\Fall|F}=0,\, \forall F\in\Fb\}.
\end{equation}
The global $C^0$-HHO spaces used to approximate the biharmonic problems are then defined as follows:
\begin{equation} \label{eq:def_HHO_spaces}
	\fesI:= V_{\mesh,0}^{k+2} \times V_{\Fall,0}^k, \qquad
	\fesII:=V_{\mesh,0}^{k+2}\times V_{\Fall}^k,
\end{equation}
for type (I) and type (II) BC's, respectively. This means that the boundary condition $u=0$ is directly enforced on the trace of the cell unknowns at the boundary, the boundary condition $\partial_nu=0$ for type (I) BC's is directly enforced on the discrete unknowns representing the normal derivative at the mesh boundary faces, and the boundary condition $\partial_{nn}u=0$ is not enforced directly, but in a natural way (i.e., it results from the discrete problem). Notice also that $\fesI\subset \fesII$.

Any member of $\fesI$ or $\fesII$ is generically denoted by $\hat{v}_h:=(v_{\mesh},\gamma_{\Fall})$ with $v_{\mesh}:=(v_K)_{K\in\mesh}$ and $\gamma_{\Fall}:=(\gamma_F)_{F\in\Fall}$. For every mesh cell $K\in\mesh$, the local components of $\hat{v}_h$ are collected in the pair $\hat{v}_K:=(v_K,\gamma_{\dK})\in\fesE$ with $\gamma_{\dK}|_F: = (\n_F {\cdot}\n_K)\gamma_{F}$ for all $F \in \FK$.
The global bilinear form is assembled cellwise from the contributions of all the mesh cells, yielding
\begin{equation}
	{a}_h(\hat{v}_h, \hat{w}_h):= \su a_K(\hat{v}_K, \hat{w}_K).
\end{equation}
The discrete problems are as follows:
\begin{itemize}
	\item For type (I) BC's, one seeks $\hat{u}_h\upI\in \fesI$ such that
	\begin{equation}\label{discrete_pb_I}
		a_h(\hat{u}_h\upI,\hat{w}_h) = \ell(w_{\mesh}), \qquad \forall w_h\in \fesI.
	\end{equation}
	\item For type (II) BC's, one seeks $\hat{u}_h\upII\in \fesII$ such that
	\begin{equation}\label{discrete_pb_II}
		a_h(\hat{u}_h\upII,\hat{w}_h) = \ell(w_{\mesh}), \qquad \forall w_h\in \fesII.
	\end{equation}
\end{itemize}
Notice that the discrete problems~\eqref{discrete_pb_I} and \eqref{discrete_pb_II} employ the same discrete bilinear form $a_h$ and the same right-hand side; only the discrete trial and test spaces differ. We also observe that the above right-hand sides are meaningful since $V_{\mesh,0}^{k+2}\subset W^{1,q'}_0(\Omega)$.

Both spaces $\fesI$ and $\fesII$ are equipped with the norm
\begin{equation} \label{H2_seminorm_glob}
	\|\hat{v}_h\|_{\fes}^{2}:= \su |\hat{v}_K|^2_{\fesE},
\end{equation}
with the local energy seminorm $|\SCAL|_{\fesE}$ defined in~\eqref{H2_seminorm_elem}.
To verify that this indeed defines a norm on $\fesII$ (and thus also on $\fesI$), we notice that if $\hat{v}_h=(v_{\mesh},\gamma_{\Fall})\in\fesII$ satisfies $\|\hat{v}_h\|_{\fes}=0$, then $v_{\mesh}$ is a globally affine function in $\Omega$ which vanishes at the boundary $\partial\Omega$, so that $v_{\mesh}=0$; moreover, $\gamma_{\Fall}$ coincides on each mesh face with the trace of the normal derivative of $v_{\mesh}$, so that $v_{\Fall}=0$ as well. A direct consequence of the lower bound in~\eqref{local equivalent} is therefore that the global bilinear form $a_h$ is coercive on $\fesII$ (and thus also on $\fesI$). Hence, both discrete problems \eqref{discrete_pb_I} and~\eqref{discrete_pb_II} are well-posed owing to the Lax--Milgram lemma.

\begin{remark}[Static condensation]
	Unlike the fully discontinuous HHO methods from \cite{BoDPGK:18,DongErn2021biharmonic,DongErn2021singular} where all the the cell unknowns can be eliminated locally by a static condensation procedure, $C^0$-HHO methods are amenable to static condensation only if $(k+2)\geq (d+1)$ on $d$-dimensional simplices, and in this case, only the so-called cell bubble functions vanishing on the cell boundary can be eliminated, as in classical $C^0$-conforming FEM. Notice also that static condensation is more delicate for $C^0$-IPDG methods since only those bubble functions having a zero second-order derivative at the cell boundary can be locally eliminated.
\end{remark}

\subsection{Analysis tools and local HHO interpolation operator}\label{Inverse inequality and polynomial approximation}

For the reader's convenience, let us briefly restate some classical discrete inverse inequalities and polynomial approximation properties on shape-regular families of simplicial meshes. The results are classical, and we refer the reader, e.g., to \cite[Chap.~9-11]{Ern_Guermond_FEs_I_2021} for the proofs. In the rest of this paper, we use
the symbol $C$ to denote any positive generic constant
(its value can change at each occurrence)
that is independent of $h>0$, the considered mesh cell $K\in\mesh$, and the considered function in the inequality. The value of $C$ can depend on the shape-regularity parameter of the mesh sequence and the polynomial degree (whenever relevant).

\begin{lemma}[Discrete inverse inequalities]\label{lemma: Inverse inequality}
	Let $l\ge0$ be a polynomial degree.
	There is $C$ such that for all $h>0$, all $K\in\mesh$, all $p\in [1,\infty]$, and all $v_h\in\mathbb{P}_l(K)$,
	\begin{align}\label{trace inverse inequality}
		\|{v}_h\|_{\partial K} &\leq C h_K^{-\frac12}  \|{v}_h\|_{ K}, \\
		\label{Cell inverse inequality}
		\|\nabla {v}_h\|_{K} &\leq C h_K^{-1}  \|{v}_h\|_{ K},  \\
		\label{inverse inequality Lp}
		\| {v}_h\|_{L^{p}(K)} &\leq C h_K^{d(\frac1p-\frac12)}  \|{v}_h\|_{ K}.
	\end{align}
\end{lemma}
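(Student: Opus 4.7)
The plan is to reduce all three inequalities to their counterparts on a fixed reference simplex $\hat{K}$ via the classical affine scaling argument, and then exploit the equivalence of norms on the finite-dimensional space $\mathbb{P}_l(\hat{K})$.

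First I would fix an affine bijection $T_K:\hat{K}\to K$ of the form $T_K(\hat{x})=J_K\hat{x}+b_K$. Shape-regularity of the mesh family yields the standard bounds $\|J_K\|\le C h_K$, $\|J_K^{-1}\|\le C h_K^{-1}$, and $|\det J_K|\simeq h_K^d$, together with an analogous surface-Jacobian estimate $\simeq h_K^{d-1}$ on each face of $\partial K$. For any $v_h\in\mathbb{P}_l(K)$, the pull-back $\hat{v}_h := v_h\circ T_K$ belongs to $\mathbb{P}_l(\hat{K})$, so all calculations can be performed on a fixed polynomial space of fixed finite dimension depending only on $l$ and $d$.

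For the trace inequality~\eqref{trace inverse inequality}, I would invoke the norm equivalence $\|\hat{v}_h\|_{\partial\hat{K}}\le C\|\hat{v}_h\|_{\hat{K}}$ on $\mathbb{P}_l(\hat{K})$ (two norms on a finite-dimensional vector space, hence comparable), then undo the change of variables: $\|v_h\|_{\partial K}^2 \simeq h_K^{d-1}\|\hat{v}_h\|_{\partial\hat{K}}^2$ and $\|\hat{v}_h\|_{\hat{K}}^2 \simeq h_K^{-d}\|v_h\|_K^2$, which combine to give the factor $h_K^{-1/2}$. For the gradient inequality~\eqref{Cell inverse inequality}, the same strategy applies starting from $\|\hat\nabla\hat{v}_h\|_{\hat{K}}\le C\|\hat{v}_h\|_{\hat{K}}$; the chain rule adds a factor $\|J_K^{-1}\|\le Ch_K^{-1}$ when converting $\hat\nabla\hat{v}_h$ to $\nabla v_h$, and the Jacobian factors in the two $L^2$-norms cancel, yielding the stated estimate. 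For the $L^p$ inequality~\eqref{inverse inequality Lp}, I would again use equivalence of norms on $\mathbb{P}_l(\hat{K})$ to obtain $\|\hat{v}_h\|_{L^p(\hat{K})}\le C\|\hat{v}_h\|_{L^2(\hat{K})}$ for every $p\in[1,\infty]$; scaling then produces the factor $h_K^{d/p}$ on the left and $h_K^{d/2}$ on the right, giving $h_K^{d(1/p-1/2)}$.

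There is no real obstacle here; the only mild point is to keep track of the Jacobian exponents, in particular the fact that for $p=\infty$ the scaling $\|v_h\|_{L^\infty(K)}=\|\hat{v}_h\|_{L^\infty(\hat{K})}$ is dimensionless, which is consistent with the exponent $d(1/p-1/2)=-d/2$ obtained by continuous extension. Since the paper explicitly refers the reader to \cite[Chap.~9-11]{Ern_Guermond_FEs_I_2021}, a detailed proof is not required and I would simply record the scaling identities and cite the textbook.
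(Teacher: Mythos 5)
Your proposal is correct and follows essentially the same route as the classical proof the paper delegates to the cited textbook: affine pull-back to a fixed reference simplex, equivalence of norms on the finite-dimensional space $\mathbb{P}_l(\hat{K})$, and careful bookkeeping of the volume and surface Jacobian factors. The only detail worth making explicit is that the constant in \eqref{inverse inequality Lp} must be uniform in $p\in[1,\infty]$, which you obtain by routing every $p$ through the single equivalence $\|\hat{v}_h\|_{L^\infty(\hat{K})}\le C\|\hat{v}_h\|_{L^2(\hat{K})}$ together with $\|\hat{v}_h\|_{L^p(\hat{K})}\le |\hat{K}|^{1/p}\|\hat{v}_h\|_{L^\infty(\hat{K})}$, rather than invoking a possibly $p$-dependent norm equivalence for each $p$ separately.
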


\begin{lemma}[Multiplicative trace inequality]\label{lemma: trace inequality}
	There is $C$ such that for all $h>0$,
	all $K\in\mesh$, and all $v\in H^1(K)$,
	\begin{equation}\label{trace inequality}
		\|{v} \|_{\dK} \leq C \big(
		h_K^{-\frac12}  \|{v}\|_{K} + h_K^{\frac12}  \|\nabla v\|_{K}\big).
	\end{equation}
\end{lemma}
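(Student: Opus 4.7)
The plan is to establish the inequality by a standard scaling argument to a reference simplex, combined with the classical continuous trace theorem in $H^1$ on that reference element. This route is convenient because shape-regularity of $\{\mesh\}_{h>0}$ furnishes a fixed finite catalogue of reference configurations (up to affine equivalence with uniformly controlled Jacobians), so all constants that arise are bounded independently of $h$ and $K$.

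First I would fix a reference simplex $\hat K \subset \mathbb{R}^d$ and, for each $K\in\mesh$, write the affine map $T_K:\hat K \to K$ with $T_K(\hat x)=J_K\hat x + b_K$. Shape-regularity yields $\|J_K\|\lesssim h_K$ and $\|J_K^{-1}\|\lesssim h_K^{-1}$, and likewise $|\det J_K|\simeq h_K^d$ and $|\det J_{K,F}|\simeq h_K^{d-1}$ for the induced affine maps on each face $F\in\FK$. Setting $\hat v := v\circ T_K$, the change-of-variables formulas give
\begin{equation*}
\|v\|_K^2 \simeq h_K^d\,\|\hat v\|_{\hat K}^2,\qquad
\|v\|_{\dK}^2 \simeq h_K^{d-1}\,\|\hat v\|_{\partial \hat K}^2,\qquad
\|\nabla v\|_K^2 \simeq h_K^{d-2}\,\|\hat\nabla \hat v\|_{\hat K}^2,
\end{equation*}
with hidden constants depending only on the shape-regularity parameter.

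Next I would apply the classical continuous trace theorem on $\hat K$, which asserts the existence of $\hat C>0$ such that
\begin{equation*}
\|\hat v\|_{\partial\hat K} \le \hat C\bigl(\|\hat v\|_{\hat K} + \|\hat\nabla \hat v\|_{\hat K}\bigr),
\qquad \forall \hat v\in H^1(\hat K).
\end{equation*}
This is a standard consequence of the existence of a bounded trace operator $H^1(\hat K)\to L^2(\partial\hat K)$, which follows for instance from a divergence-theorem identity on $\hat K$ together with the Cauchy--Schwarz inequality, or from a density argument reducing to smooth functions.

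Combining the trace theorem on $\hat K$ with the scaling relations and squaring, I would obtain
\begin{equation*}
h_K^{d-1}\|\hat v\|_{\partial\hat K}^2 \lesssim h_K^{d-1}\bigl(\|\hat v\|_{\hat K}^2+\|\hat\nabla\hat v\|_{\hat K}^2\bigr) \lesssim h_K^{-1}\|v\|_K^2 + h_K\|\nabla v\|_K^2,
\end{equation*}
which rearranges to~\eqref{trace inequality} after taking a square root and renaming the constant. The only real care needed is the tracking of the powers of $h_K$ in the scaling, which comes directly from the bounds on $J_K$ and its inverse afforded by shape-regularity; no truly delicate step is involved, and this is why the result is cited from \cite[Chap.~9--11]{Ern_Guermond_FEs_I_2021} rather than proved in detail.
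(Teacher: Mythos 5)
Your argument is correct, and it proves exactly the stated inequality: the scaling factors $h_K^{d}$, $h_K^{d-1}$, $h_K^{d-2}$ are right, the direction of each bound you actually need (upper bound on $\|v\|_{\dK}$, lower bound on $\|v\|_K$ and $\|\nabla v\|_K$ in terms of the reference quantities) follows from $\|J_K\|\lesssim h_K$, $\|J_K^{-1}\|\lesssim h_K^{-1}$ under shape-regularity, and the trace theorem on the fixed reference simplex supplies the $h$-independent constant. Note that the paper itself offers no proof of this lemma; it is stated as classical with a citation to Ern--Guermond, so there is no "paper proof" to match. For comparison, the other standard route (the one typically used in that reference, and the reason the result is called \emph{multiplicative}) works directly on $K$: applying the divergence theorem to the vector field $v^2(x-x_K)$ for a suitable point $x_K$ and using shape-regularity gives
\begin{equation*}
\|v\|_{\dK}^2 \le C\big(h_K^{-1}\|v\|_K^2 + \|v\|_K\,\|\nabla v\|_K\big),
\end{equation*}
from which \eqref{trace inequality} follows by Young's inequality. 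That argument avoids the reference-element machinery and yields the sharper multiplicative form, while your scaling argument is shorter if one is content with the additive form actually stated in the lemma; either is acceptable here.
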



\begin{lemma}[Polynomial approximation]\label{lemma:pol_app}
Let $l\geq 1$ be the polynomial degree and let $d\in\{2,3\}$ (so that $l+1>\frac{d}{2}$).
Let $\interL_h^{l}$ be the $H^1$-conforming Lagrange interpolation operator
onto $\mathbb{P}_{l}^{\rm g}(\mesh)$ and let $\interL_K^l$ be its local version mapping onto
$\mathbb{P}_{k+2}(K)$ for all $K\in\mesh$.
There is $C$ such that for all $r > \frac{d}{2}$,
all $m\in \{0,\ldots,\lfloor r\rfloor\}$, all $p\in [1,\infty]$, all $h>0$, all $K\in\mesh$,
and all $v\in H^r(K)$,
\begin{equation}\label{eq:pol_app}
h^m_K\|{v} - \interL_K^{l} (v)\|_{H^m(K)} + h^{d(\frac{1}{2}-\frac{1}{p})}_K\|{v} - \interL_K^{l} (v)\|_{L^p(K)}
\leq C h^{t}_K |{v} |_{H^t(K)},
\end{equation}
where $t:= \min\{r, l+1\}$.
\end{lemma}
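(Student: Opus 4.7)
The strategy is the textbook three–step reduction to the reference simplex, adapted to handle simultaneously the $H^m$- and $L^p$-norms on the left-hand side. Let $\hat K$ be the reference $d$-simplex and let $T_K:\hat K\to K$ be an affine bijection. Since $\mathcal{L}_K^{l}$ is a Lagrange interpolation operator with nodes in $\bar K$, the composition $\widehat{v}:=v\circ T_K$ satisfies $\mathcal{L}_{\hat K}^{l}(\widehat v)=\mathcal{L}_K^{l}(v)\circ T_K$, so it suffices to prove the bound on the reference element and then pull back through $T_K$.

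On the reference element, the first ingredient is well-definedness of the Lagrange interpolation operator. Because $r>d/2$ and $d\in\{2,3\}$, the Sobolev embedding $H^r(\hat K)\hookrightarrow C^0(\hat K)$ holds, so pointwise values at the Lagrange nodes are meaningful and $\mathcal{L}_{\hat K}^{l}:H^r(\hat K)\to\mathbb{P}_{l}(\hat K)$ is well defined and continuous. The second ingredient is polynomial invariance: $\mathcal{L}_{\hat K}^{l}$ leaves $\mathbb{P}_{l}(\hat K)$ invariant, hence also $\mathbb{P}_{t-1}(\hat K)$ since $t\le l+1$. The third is the Deny--Lions (Bramble--Hilbert) lemma on $\hat K$, which yields
\[
\inf_{q\in\mathbb{P}_{t-1}(\hat K)} \|\widehat v-q\|_{H^t(\hat K)} \le C\,|\widehat v|_{H^t(\hat K)}.
\]
Combining these three ingredients and the continuity of $\mathcal{L}_{\hat K}^{l}$ from $H^t(\hat K)$ into $H^m(\hat K)$ and into $L^p(\hat K)$ (using Sobolev embedding once more, since $t>d/2$), one obtains
\[
\|\widehat v-\mathcal{L}_{\hat K}^{l}(\widehat v)\|_{H^m(\hat K)} + \|\widehat v-\mathcal{L}_{\hat K}^{l}(\widehat v)\|_{L^p(\hat K)} \le C\,|\widehat v|_{H^t(\hat K)}.
\]

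The final step is the scaling argument. Under an affine map with $\|DT_K\|\sim h_K$, $\|DT_K^{-1}\|\sim h_K^{-1}$, and $|\det DT_K|\sim h_K^d$ (the equivalences being uniform under the mesh shape-regularity assumption), one has the standard scalings
$|\widehat w|_{H^s(\hat K)}\simeq h_K^{s-d/2}|w|_{H^s(K)}$ and $\|\widehat w\|_{L^p(\hat K)}\simeq h_K^{-d/p}\|w\|_{L^p(K)}$ for $s\ge0$ and $p\in[1,\infty]$. Applying these with $s\in\{m,t\}$ to the $H^m$-term and with the $L^p$-scaling to the $L^p$-term, pulling the factor $h_K^{d/2}$ through on both sides of the reference-element estimate, and collecting the powers of $h_K$ yields exactly the claimed bound
\[
h_K^m\|v-\mathcal{L}_K^{l}(v)\|_{H^m(K)} + h_K^{d(\frac12-\frac1p)}\|v-\mathcal{L}_K^{l}(v)\|_{L^p(K)} \le C\, h_K^{t}|v|_{H^t(K)}.
\]

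I expect no essential obstacle: the only subtle point is keeping track of the two different scalings (Sobolev versus Lebesgue) simultaneously and ensuring that the Sobolev embedding $H^t(\hat K)\hookrightarrow C^0(\hat K)\hookrightarrow L^p(\hat K)$ covers uniformly the range $p\in[1,\infty]$, which it does precisely because $t>d/2$. No new machinery beyond what is standard in \cite[Chap.~11]{Ern_Guermond_FEs_I_2021} is needed, which is why the authors are likely to appeal directly to that reference rather than rewrite the proof.
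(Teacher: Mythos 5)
Your argument is correct and is essentially the proof the paper relies on: the paper gives no proof of this lemma but refers to \cite[Chap.~9--11]{Ern_Guermond_FEs_I_2021}, where exactly this reduction to the reference simplex (Sobolev embedding for well-posedness of the Lagrange interpolant, polynomial invariance, Deny--Lions/Bramble--Hilbert, and affine scaling with shape regularity, including the fractional-order Sobolev--Slobodeckij case) is carried out. No discrepancy to report.
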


Let us briefly highlight some useful consequences of Lemma~\ref{lemma:pol_app}. First,
taking $l=1$, $r=2$, and $m\in\{0,1\}$ in \eqref{eq:pol_app} shows that for all $h>0$,
all $K\in\mesh$, and all $v\in H^2(K)$,
\begin{equation}
	\|v-\interL_K^1(v)\|_K+h_K\|\nabla(v-\interL_K^1(v))\|_K\le Ch_K^2\|\nabla^2v\|_K.\label{eq:PK}
\end{equation}
Then, for all $v\in H^2(K)$, setting $z:=v-\interL_K^{k+2}(v)$ for any polynomial degree $k\ge0$, and observing that $\interL_K^1(z)=0$ since $\interL_K^1\circ \interL_K^{k+2}=\interL_K^1$, we infer from~\eqref{eq:PK} that $\|\nabla z\|_{K}\le Ch_K\|\nabla^2 z\|_K$. Hence, owing to the multiplicative trace inequality \eqref{trace inequality}, we have $h_K^{-\frac12}\|\nabla z\|_{\dK} \le C\|\nabla^2z\|_K$, i.e.,
\begin{align}
	h_K^{-\frac12}\|\nabla(v-\interL_K^{k+2}(v))\|_{\dK}
	\le C\|\nabla^2(v-\interL_K^{k+2}(v))\|_K. \label{eq:tr_PK}
\end{align}

\begin{remark}[$H^1$-conforming quasi-interpolation operator]
	The lower bound $r>\frac{d}{2}$ is not really a restriction for the biharmonic problem in space dimension $d\in\{2,3\}$ since the weak solution always sits in $H^2(\Omega)$. In higher space dimension, the restriction $r>\frac{d}{2}$ can be lifted by invoking a quasi-interpolation operator onto $\mathbb{P}_{l}^{\rm g}(\mesh)$ instead of the Lagrange interpolation operator; we refer the reader to \cite{ScoZh:90,ErnGuermond:17} for examples of quasi-interpolation operators that can be considered.
\end{remark}

For every mesh cell $K\in \mesh$, we define the local HHO reduction operator $\mathcal{\hat{I}}^k_K : H^2(K) \rightarrow \fesE$ such that, for all $v\in H^2(K)$,
\begin{equation}\label{def: reduction operator}
	\mathcal{\hat{I}}^k_K(v):= (\interL_{K}^{k+2}(v), \Pi_{\dK}^k (\n_K{\cdot}\nabla v)) \in \fesE.
\end{equation}
This definition is meaningful since $H^2(K)\hookrightarrow C^0(\overline{K})$ for $d\in\{2,3\}$.
In addition, we define the local HHO interpolation operator
\begin{equation}
	J\uHHO_K:= R_K \circ \mathcal{\hat{I}}^k_K: H^2(K) \rightarrow \mathbb{P}_{k+2} (K).
\end{equation}
In what follows, we need to measure the HHO interpolation error in some augmented norm which reflects Assumption~\ref{ass:regularity}. Specifically, we set
\begin{equation}
	\|v\|^2_{\sharp, K}:= \|\nabla^2 v\|_{K}^2 + h_K^{2d(\frac{1}{2}-\frac1p)}\|\nabla^2 v \|_{L^p(K)}^2  + {h_K^{2+2d(\frac{1}{2}-\frac1q)}\|\nabla \Delta v \|_{L^q(K)}^2}, \label{eq:def_sharp_II}
\end{equation}
for all $v\in W^{2,p}(K)$ with $\Delta v\in W^{1,q}(K)$ (recall that $p>2$ and $q\in (\frac{2d}{d+2},2]$).

\begin{lemma}[HHO interpolation error] \label{lem: approxmation}
In the above setting, the following holds for all $K\in \mesh$:
\begin{equation}\label{Error bound in sharp norm for Clamped plate}
\|v - J\uHHO_K (v)\|_{\sharp, K}^2 + S_{\dK}(\mathcal{\hat{I}}^k_K(v),\mathcal{\hat{I}}^k_K(v))
\leq
C \| v - \interL_K^{k+2}(v)\|_{\sharp,K}^2.
\end{equation}
\end{lemma}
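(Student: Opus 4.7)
The plan is to exploit a polynomial-preservation property of $J\uHHO_K$ in order to reduce the estimate to a bound on the Lagrange interpolation error $v-\interL_K^{k+2}(v)$. Throughout, I set $v^*:=\interL_K^{k+2}(v)$ and $\eta:=v-v^*$.

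\textbf{Step 1 (Polynomial preservation).} The key observation is that $J\uHHO_K$ reproduces polynomials of degree $\le k+2$. For $p\in\mathbb{P}_{k+2}(K)$, $\mathcal{\hat I}^k_K(p)=(p,\Pi^k_{\dK}(\partial_n p))$, and the boundary contribution $((I-\Pi^k_{\dK})(\partial_n p),\partial_{nn}w)_{\dK}$ in \eqref{eq: reconstruction} vanishes for every test function $w\in\mathbb{P}_{k+2}(K)^\perp$ because $\partial_{nn}w|_F\in\mathbb{P}_k(F)$. Together with the $\mathbb{P}_1(K)$-matching condition imposed on $R_K$, this forces $R_K(\mathcal{\hat I}^k_K(p))=p$. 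By linearity, $v-J\uHHO_K(v)=\eta-J\uHHO_K(\eta)$, and since $\interL_K^{k+2}(\eta)=0$, the reduction simplifies to $\mathcal{\hat I}^k_K(\eta)=(0,\Pi^k_{\dK}(\partial_n\eta))$.

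\textbf{Step 2 (Energy of $J\uHHO_K(\eta)$).} The $\mathbb{P}_1(K)$-matching condition with vanishing cell component places $J\uHHO_K(\eta)\in\mathbb{P}_{k+2}(K)^\perp$, making it an admissible test function in \eqref{eq: reconstruction}. Inserting $w=J\uHHO_K(\eta)$ and using that $\partial_{nn}w|_F\in\mathbb{P}_k(F)$ to replace $\Pi^k_{\dK}(\partial_n\eta)$ by $\partial_n\eta$ in the duality pairing yields
\[
\|\nabla^2 J\uHHO_K(\eta)\|_K^2 = (\partial_n\eta,\partial_{nn}J\uHHO_K(\eta))_{\dK}.
\]
Cauchy--Schwarz on $\dK$, the discrete trace inverse inequality \eqref{trace inverse inequality} applied to $\partial_{nn}J\uHHO_K(\eta)$, and the Lagrange bound \eqref{eq:tr_PK} then give $\|\nabla^2 J\uHHO_K(\eta)\|_K\le C\|\nabla^2\eta\|_K$.

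\textbf{Step 3 (Sharp-norm reduction for polynomials).} Since $J\uHHO_K(\eta)\in\mathbb{P}_{k+2}(K)$, the discrete inverse inequalities of Lemma~\ref{lemma: Inverse inequality} yield $\|\nabla^2 J\uHHO_K(\eta)\|_{L^p(K)}\le C h_K^{-d(1/2-1/p)}\|\nabla^2 J\uHHO_K(\eta)\|_K$ and $\|\nabla\Delta J\uHHO_K(\eta)\|_{L^q(K)}\le C h_K^{-1-d(1/2-1/q)}\|\nabla^2 J\uHHO_K(\eta)\|_K$, and the $h_K$-prefactors in the definition \eqref{eq:def_sharp_II} of the sharp norm exactly absorb these scalings. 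Hence $\|J\uHHO_K(\eta)\|_{\sharp,K}\le C\|\nabla^2 J\uHHO_K(\eta)\|_K\le C\|\nabla^2\eta\|_K\le C\|\eta\|_{\sharp,K}$, and a triangle inequality delivers $\|\eta-J\uHHO_K(\eta)\|_{\sharp,K}\le C\|\eta\|_{\sharp,K}$, which is the desired bound on the first term on the left-hand side of \eqref{Error bound in sharp norm for Clamped plate}.

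\textbf{Step 4 (Stabilization term).} Using idempotency of $\Pi^k_{\dK}$ together with $\gamma^*=\Pi^k_{\dK}(\partial_n v)$,
\[
\Pi^k_{\dK}(\gamma^*-\partial_n v^*) = \Pi^k_{\dK}(\partial_n v)-\Pi^k_{\dK}(\partial_n v^*) = \Pi^k_{\dK}(\partial_n\eta),
\]
so $S_{\dK}(\mathcal{\hat I}^k_K(v),\mathcal{\hat I}^k_K(v)) = h_K^{-1}\|\Pi^k_{\dK}(\partial_n\eta)\|_{\dK}^2 \le h_K^{-1}\|\partial_n\eta\|_{\dK}^2$ by $L^2$-stability, and another invocation of \eqref{eq:tr_PK} bounds this by $C\|\nabla^2\eta\|_K^2\le C\|\eta\|_{\sharp,K}^2$.

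The crux of the argument, and the reason it avoids any delicate handling of $(I-\Pi^k_{\dK})(\partial_n v^*)$, is the polynomial preservation identified in Step~1; this makes the cell unknown of $\mathcal{\hat I}^k_K(\eta)$ vanish identically, which in turn is what allows the very clean energy identity in Step~2.
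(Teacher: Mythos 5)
Your proof is correct and follows essentially the same route as the paper: since $J\uHHO_K(v)-\interL_K^{k+2}(v)=J\uHHO_K(\eta)$, your Steps~2--3 reproduce the paper's argument of testing the reconstruction identity with this polynomial difference, invoking the discrete trace inverse inequality and \eqref{eq:tr_PK}, and then using inverse inequalities to control its $\|\cdot\|_{\sharp,K}$-norm by its $H^2$-seminorm, while Step~4 is identical to the paper's treatment of the stabilization term. The explicit polynomial-preservation observation in Step~1 is a harmless repackaging (the paper achieves the same effect by directly subtracting the two reconstruction identities), so no substantive difference remains.
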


\begin{proof}
(1) Let us first bound $\|v - J\uHHO_K (v)\|_{\sharp, K}$. The triangle inequality
followed by the discrete inverse inequalities from Lemma~\ref{lemma: Inverse inequality} implies that
\begin{align*}
\|v - J\uHHO_K (v)\|_{\sharp, K} &\leq \|v - \interL_K^{k+2}(v)\|_{\sharp, K} + \|J\uHHO_K(v) - \interL_K^{k+2}(v)\|_{\sharp, K} \\
&\leq \|v - \interL_K^{k+2}(v)\|_{\sharp, K} + C\|\nabla^2 (J\uHHO_K(v) - \interL_K^{k+2}(v))\|_{K},
\end{align*}
so that we only need to bound the last term on the right-hand side.
A straightforward calculation using the definition \eqref{eq: reconstruction} of $R_K$ shows that,
for all $w \in  \mathbb{P}_{k+2}(K)^\perp$,
\begin{align*}
(\nabla^2  J\uHHO_K(v), \nabla^2 w)_{K}
={}&	(\nabla^2  \interL_{K}^{k+2}(v), \nabla^2 w)_{K}
- (\partial_n \interL_{K}^{k+2}(v) - \Pi_{\dK}^k (\partial_n v),  \partial_{nn}  w)_{\dK} \\
={}&(\nabla^2  \interL_{K}^{k+2}(v), \nabla^2 w)_{K}
- (\partial_n (\interL_{K}^{k+2}(v) -   v) ,  \partial_{nn} w)_{\dK},
\end{align*}
together with the relation $(J\uHHO_K(v) ,  \xi)_{K} = (  \interL_{K}^{k+2}(v) ,  \xi)_{K} $, for all $\xi \in \mathbb{P}_1(K)$.
Re-arranging the terms, taking $w =  J\uHHO_K(v) - \interL_{K}^{k+2}(v)$, and
using the discrete trace inverse inequality \eqref{trace inverse inequality} gives
\begin{align*}
\|\nabla^2 (J\uHHO_K(v)  - \interL_{K}^{k+2}(v))\|_{K} \leq {}&
C h_K^{-\frac12}\|\partial_n (\interL_{K}^{k+2}(v) -   v)\|_{\dK}.
\end{align*}
Bounding the right-hand side using \eqref{eq:tr_PK} leads to
\[
\|\nabla^2 (J\uHHO_K(v)  - \interL_{K}^{k+2}(v))\|_{K} \leq {}
C \|\nabla^2 (v - \interL_{K}^{k+2}(v))\|_{K}.
\]
Putting the above bounds together shows that $\|v - J\uHHO_K (v)\|_{\sharp, K} \leq C
\|v - \interL_K^{k+2}(v)\|_{\sharp, K}$ since the $\|\SCAL\|_{\sharp,K}$-norm controls the $H^2(K)$-seminorm.

(2) Let us now bound $S_{\dK}(\mathcal{\hat{I}}^k_K(v),\mathcal{\hat{I}}^k_K(v))$.
We have
\begin{align*}
S_{\dK}(\mathcal{\hat{I}}^k_K(v),\mathcal{\hat{I}}^k_K(v))
={}& h_K^{-1} \| \Pi^k_{\dK}(\Pi^k_{\dK} (\partial_{n}v)- \partial_{n} (\interL^{k+2}_K (v))) \|^2_{\dK}
\leq h_K^{-1} \| \partial_{n}(v- \interL^{k+2}_K (v)) \|^2_{\dK},
\end{align*}
since $\Pi^k_{\dK}\circ \Pi^k_{\dK}=\Pi^k_{\dK}$ and $\Pi^{k}_{\partial K}$ is $L^2$-stable.
The  right-hand side is bounded by means of  \eqref{eq:tr_PK}, yielding
\begin{align*}
S_{\dK}(\mathcal{\hat{I}}^k_K(v),\mathcal{\hat{I}}^k_K(v)) &\leq
C \|\nabla^2 (v-\interL_{K}^{k+2}(v))\|_{K}^2.
\end{align*}
Since the $\|\SCAL\|_{\sharp,K}$-norm controls the $H^2(K)$-seminorm, this concludes the proof.
\end{proof}

It is convenient to define global versions of the above operators and norms.
The global HHO reduction operator $\mathcal{\hat{I}}_h^k:H^2(\Omega)\to \fes$ is defined such that, for all $v\in H^2(\Omega)$,
\begin{equation} \label{def:Ihk_bis}
	\Ihk(v):=\big( \interL_h^{k+2}(v),(\Pi_F^{k}(\n_F{\cdot}(\nabla v)_{|F}))_{F\in\Fall} \big)\in \fes,
\end{equation}
so that the local components of $\Ihk(v)$ are $\mathcal{\hat{I}}^k_K(v|_K)$ for all $K\in\mesh$. We also notice that $\Ihk(v)\in \fesII$ if $v\in H^1_0(\Omega)$ since the Lagrange interpolation operator preserves the homogeneous Dirichlet boundary condition. Moreover, we have $\Ihk(v)\in \fesI$ if $v\in H^2_0(\Omega)$. Furthermore, the global HHO interpolation operator mapping onto $\mathbb{P}_{k+2}(\mesh)$ is defined such that $J\uHHO_h(v)_{|K}:=J\uHHO_K(v_{|K})$ for all $K\in\mesh$ and all $v\in H^2(\Omega)$.
Finally, we define the global augmented norm such that
$\|v\|_{\sharp,h}^2:=\su \|v_{|K}\|_{\sharp,K}^2$ for all $v\in \calVq$.

\section{Error analysis} \label{sec:error_analysis}

In this section, we perform the error analysis under Assumption \ref{ass:regularity}. For completeness, we also outline the main arguments for  the error analysis of $C^0$-IPDG methods. In all cases, the main step is to bound the consistency error by making use of the key identity established in Lemma~\ref{Lemma:identity_II}.

\subsection{Preliminaries}

Before bounding the consistency error, we need to slightly adapt the bilinear form
$n_\sharp^{(4)}$ introduced in Section~\ref{sec:identity_II}. Recall the functional
space $\calVq \eqq \{v\in W^{2,p}(\Omega)\st \Delta v \in  W^{1,q}(\Omega)\}$, $p>2$, $q\in (\frac{2d}{d+2},2]$
(see \eqref{eq:calVq}) and that $n_\sharp^{(4)}$ is defined on $\calVq \times H^2(\mesh)$
using \eqref{eq:def_n_sharp} or equivalently \eqref{eq:def_n_sharp_1}.

The first adaptation is to allow for discrete functions as the
first argument of $n_\sharp^{(4)}$. Since functions in $\mathbb{P}_{k+2}^{\rm g}(\mesh)$ are
piecewise smooth, $n_\sharp^{(4)}$ can
be extended to $(\calVq+\mathbb{P}_{k+2}^{\rm g}(\mesh))\times H^2(\mesh)$. Moreover,
since $\gamma_{K,F}\upd(\nabla \partial_i v_h) = (\partial_n\partial_i v_{K})_{|F}$
for all $v_h\in \mathbb{P}_{k+2}^{\rm g}(\mesh)$ (with the notation $v_K:=v_{h|K}$),
a straightforward calculation
starting from \eqref{eq:def_n_sharp_1} shows that
for all $v_h,w_h\in \mathbb{P}_{k+2}^{\rm g}(\mesh)$,
\begin{equation}
n_\sharp^{(4)}(v_h,w_h) = \sum_{F\in\Fall} ( \mean{\partial_{nn}v_h}_F,\jump{\nabla w_h}_F\SCAL\n_F)_F,
\end{equation}
where the second-normal derivative is understood to act cellwise on $v_h$.
This identity is important when analyzing the $C^0$-IPDG method
(see Section~\ref{sec:C0IPDG}).

However, when analyzing the $C^0$-HHO method, a second adaptation is necessary since discrete test functions in the $C^0$-HHO method have two components, one attached to the mesh cells and one to the mesh faces, and not just one as in the $C^0$-IPDG method. Thus, inspired from~\eqref{eq:def_n_sharp_1}, we now introduce the following bilinear form on $(\calVq+\mathbb{P}_{k+2}^{\rm g}(\mesh))\times \fesII$ (recall that $\fesI \subset \fesII$):
\begin{equation}\label{eq:hat_n_sharp}
\hat{n}_\sharp^{(4)}(v,\hat{w}_h):= \su \sum_{F\in \FK} \sum_{\inod} \langle \gamma_{K,F}\upd(\nabla \partial_i v), n_{F,i} (\n_F\SCAL(\nabla w_K)_{|F} - \chi_F) \rangle_F.
\end{equation}
Recall the notation $\hat{w}_h=(w_{\mesh},\chi_{\Fall})$ with $w_{\mesh}=(w_K)_{K\in\mesh}$
and $\chi_{\Fall}=(\chi_F)_{F\in\Fall}$ and that the local components of $\hat{w}_h$
associated with the mesh cell $K\in\mesh$ are
$\hat{w}_K=(w_K,\chi_{\dK}=((\n_K\SCAL\n_F)\chi_F)_{F\in\FK})$.

\begin{lemma}[Identities for $\hat{n}_\sharp^{(4)}$]
\label{Lemma:identity_hat_n_sharp}
The following holds for all $v_h\in \mathbb{P}_{k+2}^{\rm g}(\mesh)$, all $v\in \calVq$, and all $\hat{w}_h \in \fesII$:
\begin{align}
\hat{n}_\sharp^{(4)}(v_h,\hat{w}_h)&
= \su(\nabla^2v_K,\nabla^2(w_K - R_K(\hat{w}_K)) )_K, \label{n sharp relation uh}\\
\hat{n}_\sharp^{(4)}(v,\hat{w}_h)&= n_\sharp^{(4)}(v,w_{\mesh}) -  \sum_{F\in \Fb} \sum_{\inod} \langle \gamma_{K,F}\upd(\nabla  \partial_i v),n_{F,i}  \chi_F \rangle_F \label{n sharp relation v in HHO},
\end{align}
where in the summation over all $F\in\Fb$, $K\in\mesh$ denotes the unique mesh cell such that $F=\partial K\cap \partial\Omega$.
\end{lemma}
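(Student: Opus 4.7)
The plan is to prove the two identities separately, both by elementary manipulations starting from the definition~\eqref{eq:hat_n_sharp} of $\hat{n}_\sharp^{(4)}$.

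For identity~\eqref{n sharp relation uh}, I would first exploit the fact that $v_h$ is piecewise polynomial, so $\gamma_{K,F}\upd(\nabla\partial_i v_K) = \partial_{n_K}\partial_i v_K$ as an element of $L^2(F)$. Collecting the $i$-summation yields $\sum_i n_{F,i}\partial_{n_K}\partial_i v_K = \n_K\SCAL\nabla^2 v_K\,\n_F$. Introducing the sign $\epsilon_{K,F}:=\n_F\SCAL\n_K\in\{\pm 1\}$ and recalling that $\chi_{\dK|F}=\epsilon_{K,F}\chi_F$, the test quantity $n_{F,i}(\n_F\SCAL(\nabla w_K)_{|F}-\chi_F)$ is recast, after summation over $i$, as $\epsilon_{K,F}(\partial_n w_K - \chi_{\dK})$. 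The two $\epsilon_{K,F}$ factors cancel, so
\begin{equation*}
\hat{n}_\sharp^{(4)}(v_h,\hat{w}_h) = \su (\partial_{nn}v_K,\partial_n w_K - \chi_{\dK})_{\dK}.
\end{equation*}
I would then invoke the defining relation~\eqref{eq: reconstruction} of the reconstruction $R_K(\hat{w}_K)$, applied with test function $w=v_K\in\mathbb{P}_{k+2}(K)$. The formula extends from $\mathbb{P}_{k+2}(K)^\perp$ to all of $\mathbb{P}_{k+2}(K)$ because both sides vanish on $\mathbb{P}_1(K)$ (Hessians of affine polynomials are zero). Rearranging yields $(\partial_{nn}v_K,\partial_n w_K-\chi_{\dK})_{\dK}=(\nabla^2 v_K,\nabla^2(w_K-R_K(\hat{w}_K)))_K$, proving~\eqref{n sharp relation uh}.

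For identity~\eqref{n sharp relation v in HHO}, I would split the face sum defining $\hat{n}_\sharp^{(4)}(v,\hat{w}_h)$ into contributions from $\FKi$ and $\FKb$, and subtract the analogous decomposition~\eqref{eq:def_n_sharp_1} of $n_\sharp^{(4)}(v,w_{\mesh})$. On boundary faces, the term $\n_F\SCAL(\nabla w_K)_{|F}$ already matches between the two expressions, leaving precisely the $-\chi_F$ contribution that one sees on the right-hand side. On each interior face $F=\partial K_1\cap\partial K_2$, the remaining contribution combines the two cells' traces against the common test quantity $n_{F,i}(\n_F\SCAL\mean{\nabla w}_F-\chi_F)$, which is single-valued on $F$. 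I would then invoke the conservation identity~\eqref{eq:n_sharp_Fi}, whose applicability is guaranteed by $v\in\calVq$ (indeed, $\nabla\partial_i v\in\bV\upd(K)$ since $\Delta\partial_i v=\partial_i\Delta v\in L^q(\Omega)$); this forces the interior-face contribution to vanish and yields~\eqref{n sharp relation v in HHO}.

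The main technical obstacle I anticipate is bookkeeping the orientations $\n_F$ versus $\n_K$ carefully in identity~\eqref{n sharp relation uh}: one must verify that the two factors $\epsilon_{K,F}$ cancel so that the final quantity is orientation-independent. Beyond this, everything reduces to the defining formula of $R_K$ for the first identity and to the already established conservation property~\eqref{eq:n_sharp_Fi} of the divergence-trace $\gamma_{K,F}\upd$ for the second.
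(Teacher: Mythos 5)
Your proposal is correct and follows essentially the same route as the paper: reduce $\hat{n}_\sharp^{(4)}(v_h,\hat{w}_h)$ to $\su(\partial_{nn}v_K,\partial_n w_K-\chi_{\dK})_{\dK}$ and invoke the defining relation of $R_K$, then subtract the representation~\eqref{eq:def_n_sharp_1}, regroup the interface sums, and apply~\eqref{eq:n_sharp_Fi}. Your explicit remarks on the sign cancellation $\n_F$ versus $\n_K$ and on extending the defining relation of $R_K$ from $\mathbb{P}_{k+2}(K)^\perp$ to all of $\mathbb{P}_{k+2}(K)$ (both sides vanishing on $\mathbb{P}_1(K)$) are details the paper leaves implicit, and they are handled correctly.
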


\begin{proof}
(1) Proof of \eqref{n sharp relation uh}. Since $v_h$ is piecewise smooth, we have
$\gamma_{K,F}\upd(\nabla \partial_i v_K)_{|F}=(\n_K\SCAL\nabla\partial_i v_K)_{|F}$ for all $K\in\mesh$ and all $F\in\FK$, so that
\begin{equation*}
\sum_{\inod} \langle \gamma_{K,F}\upd(\nabla \partial_i v_K)_{|F}, n_{F,i}(\n_F\SCAL(\nabla w_K)_{|F} - \chi_F) \rangle_F
=\int_F  \partial_{nn}v_K(\partial_n w_K - \chi_{\dK}) \ud s,
\end{equation*}
where all the normal derivatives are understood to be oriented by $\n_K$.
Therefore, we obtain
\begin{equation*}
\begin{aligned}
\hat{n}_\sharp^{(4)}(v_h,\hat{w}_h) &
= \su  (\partial_{nn}{v_K}, \partial_n w_K - \chi_{\dK} )_{\partial K}.
\end{aligned}
\end{equation*}
Recalling the definition \eqref{eq: reconstruction} of the reconstruction operator $R_K$ applied to $\hat{w}_K$ proves \eqref{n sharp relation uh}.

(2) Proof of \eqref{n sharp relation v in HHO}. Using the expression~\eqref{eq:def_n_sharp_1} for $n_\sharp^{(4)}$, we infer that
\begin{align*}
\hat{n}_\sharp^{(4)}(v,\hat{w}_h)- n_\sharp^{(4)}(v,w_{\mesh}) =
\su \sum_{\inod} \Big\{ & \sum_{F\in\FKi} \langle \gamma_{K,F}\upd(\nabla\partial_iv),n_{F,i}(\n_F\SCAL\mean{\nabla w_{\mesh}}_F-\chi_F)\rangle_F \\
& - \sum_{F\in\FKb} \langle \gamma_{K,F}\upd(\nabla\partial_iv),n_{F,i}\chi_F\rangle_F\Big\}.
\end{align*}
Re-organizing the summation over the mesh interfaces yields
\begin{align*}
\hat{n}_\sharp^{(4)}(v,\hat{w}_h)- n_\sharp^{(4)}(v,w_{\mesh}) = {}& \sum_{F\in\Fint} \sum_{\inod} \sum_{j\in\{1,2\}}
\langle \gamma_{K_j,F}\upd(\nabla\partial_iv),n_{F,i}(\n_F\SCAL\mean{\nabla w_{\mesh}}_F-\chi_F)\rangle_F \\
& - \sum_{F\in\Fb} \sum_{\inod}
\langle \gamma_{K,F}\upd(\nabla\partial_iv),n_{F,i}\chi_F\rangle_F,
\end{align*}
recalling the notation $F=\partial K_1\cap \partial K_2$ for all $F\in\Fint$.
Since the first term on the right-hand side vanishes owing to \eqref{eq:n_sharp_Fi}, we conclude that \eqref{n sharp relation v in HHO} holds true.
\end{proof}

If the function $v$ is smooth, we have $\sum_{\inod} \langle \gamma_{K,F}\upd(\nabla \partial_i v), n_{F,i}  \chi_F  \rangle_F = \int_F (\partial_{nn}v) \chi_F \ud s.$ Thus, we expect that the right-hand side of \eqref{n sharp relation v in HHO} vanishes when applied with the first argument equal to the weak solution of \eqref{weak_form_II} (type (II) BC's). The same property holds when considering the weak solution of \eqref{weak_form_I} (type (I) BC's)
since, in this case, the face component of the test function vanishes at the boundary faces. Les us now formalize these arguments.

\begin{lemma}[Identity for weak solution]
\textup{(i)} Type \textup{(I)} BC's: Assume the weak solution $u\upI$ to \eqref{weak_form_I}
is in $\calVq$. Then, we have
\begin{subequations} \begin{equation} \label{eq:hat_n_sharp_u_I}
\hat{n}_\sharp^{(4)}(u\upI,\hat{w}_h)= n_\sharp^{(4)}(u\upI,w_{\mesh}), \qquad \forall \hat{w}_h \in \fesI.
\end{equation}
\textup{(ii)} Type \textup{(ii)} BC's: Assume the weak solution $u\upII$ to \eqref{weak_form_II}
is in $\calVq$. Then, we have
\begin{equation} \label{eq:hat_n_sharp_u_II}
\hat{n}_\sharp^{(4)}(u\upII,\hat{w}_h)= n_\sharp^{(4)}(u\upII,w_{\mesh}), \qquad \forall \hat{w}_h \in \fesII.
\end{equation} \end{subequations}
\end{lemma}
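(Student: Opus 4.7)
The plan is to derive both identities from the single relation \eqref{n sharp relation v in HHO} in Lemma~\ref{Lemma:identity_hat_n_sharp}, which shows that the difference $\hat{n}_\sharp^{(4)}(v,\hat{w}_h) - n_\sharp^{(4)}(v,w_{\mesh})$ is exactly the sum over boundary faces $-\sum_{F\in\Fb}\sum_{\inod}\langle \gamma_{K,F}\upd(\nabla\partial_i v), n_{F,i}\chi_F\rangle_F$ whenever $v\in\calVq$ and $\hat{w}_h\in\fesII$. The whole task thus reduces to proving that this boundary sum vanishes when $v$ is the weak solution. For type \textup{(I)}, this is immediate: the definition $\fesI = V_{\mesh,0}^{k+2}\times V_{\Fall,0}^k$ forces $\chi_F = 0$ for every $F\in\Fb$, so each summand is zero term-by-term, yielding \eqref{eq:hat_n_sharp_u_I} at once.

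For type \textup{(II)}, face unknowns on $\Fb$ are unconstrained, so by linearity and arbitrariness of $\chi_F$ on each boundary face separately, the problem reduces to showing, for every $F\in\Fb$ and every $\chi_F\in\mathbb{P}_k(F)$, that $\sum_{\inod}\langle \gamma_{K,F}\upd(\nabla\partial_i u\upII), n_{F,i}\chi_F\rangle_F = 0$. This is precisely the natural boundary condition $\partial_{nn}u\upII=0$ encoded weakly through $\gamma_{K,F}\upd$. To produce it, I would first show that $n_\sharp^{(4)}(u\upII,\psi) = 0$ for every $\psi\in H^1_0(\Omega)\cap H^2(\Omega)$: apply Lemma~\ref{Lemma:identity_II} with $v=u\upII$ and $w=\psi$ (so that the cellwise sum collapses to a single integral on $\Omega$) to obtain $\langle\Delta^2 u\upII,\psi\rangle = (\nabla^2 u\upII,\nabla^2\psi)_\Omega - n_\sharp^{(4)}(u\upII,\psi)$; the first term on the right equals $\ell(\psi)$ by \eqref{weak_form_II}; the left-hand side also equals $\ell(\psi)$ because $\Delta^2 u\upII = f$ in $W^{-1,q}(\Omega)$ (established by testing with $\psi\in C_c^\infty(\Omega)$, for which $n_\sharp^{(4)}(u\upII,\psi)=0$ trivially).

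Next, I would rewrite $n_\sharp^{(4)}(u\upII,\psi)$ using the simplified form \eqref{eq:n_sharp_simplif}(b), which applies since $\psi\in H^2(\mesh)\cap H^1_0(\Omega)$, and invoke the cancellation \eqref{eq:n_sharp_Fi} on every interior face (legitimate because $\nabla\partial_i u\upII$ has integrable divergence whenever $u\upII\in\calVq$). Only boundary terms survive, giving
$$\sum_{F\in\Fb}\sum_{\inod}\langle\gamma_{K,F}\upd(\nabla\partial_i u\upII), n_{F,i}\,\n_F\SCAL(\nabla\psi)_{|F}\rangle_F = 0, \qquad \forall\psi\in H^1_0(\Omega)\cap H^2(\Omega).$$
Localization to a single face $F^\star\in\Fb$ is achieved by choosing $\psi$ as a lifting whose trace $\partial_n\psi_{|\partial\Omega}$ equals the prescribed $\chi_{F^\star}$ on $F^\star$ and vanishes on $\partial\Omega\setminus F^\star$; substitution then isolates the desired $F^\star$-summand.

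The main technical obstacle is the construction of this lifting: an arbitrary $\chi_{F^\star}\in\mathbb{P}_k(F^\star)$ does not vanish on $\partial F^\star$, so its extension by zero across the adjacent boundary faces is not in $H^{3/2}(\partial\Omega)$, and the standard $H^2$-lifting theorem cannot be applied directly. I would circumvent this by multiplying $\chi_{F^\star}$ by a smooth cutoff $\eta_\epsilon$ that vanishes in an $\epsilon$-neighbourhood of $\partial F^\star$ and equals $1$ elsewhere. For each $\chi_{F^\star}^\epsilon := \chi_{F^\star}\eta_\epsilon$, a genuine lifting $\psi_\epsilon\in H^1_0(\Omega)\cap H^2(\Omega)$ exists, so the identity holds with $\chi_{F^\star}^\epsilon$ in place of $\chi_{F^\star}$. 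Passing to the limit $\epsilon\to 0$ relies on the continuity of the pairing $\phi\mapsto\langle\gamma_{K,F^\star}\upd(\nabla\partial_i u\upII),\phi\rangle_{F^\star}$ on $W^{\frac{1}{\varrho},\varrho'}(F^\star)$ together with the convergence $\chi_{F^\star}^\epsilon\to\chi_{F^\star}$ in that same space (which follows from the smoothness of the polynomial $\chi_{F^\star}$ and a routine cutoff estimate). Combining all of the above gives \eqref{eq:hat_n_sharp_u_II}.
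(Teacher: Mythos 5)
Your proof is correct, and for type (II) it takes a genuinely different route from the paper's. Part (i) and the reduction of part (ii) to the single-face statement $\sum_{\inod}\langle\gamma_{K,F}\upd(\nabla\partial_i u\upII),n_{F,i}\chi_F\rangle_F=0$ for each $F\in\Fb$ coincide with the paper (both via \eqref{n sharp relation v in HHO}). For that single-face statement, the paper works cell-locally: it takes a lifting $\phi_\chi\in H^1_0(K)\cap H^2(K)$ with $\partial_n\phi_\chi=\chi_F$ on $F$ and $\partial_n\phi_\chi=0$ on the other faces of $K$ (citing Girault--Raviart), extends it by zero, uses it as a test function in \eqref{weak_form_II}, rewrites the face pairing through the definition \eqref{Def: trace operator} with the face-to-cell lifting $L_F^K$, and removes the volume mismatch $\delta_i=L_F^K(\partial_i\phi_\chi)-\partial_i\phi_\chi\in W^{1,\varrho'}_0(K)$ by a density argument inside $K$. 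You instead first prove the global statement $n_\sharp^{(4)}(u\upII,\psi)=0$ for all $\psi\in H^1_0(\Omega)\cap H^2(\Omega)$ (Lemma \ref{Lemma:identity_II} combined with $\Delta^2u\upII=f$ in $W^{-1,q}(\Omega)$), reduce to the boundary faces via \eqref{eq:n_sharp_Fi}, and localize with liftings of the truncated data $\chi_{F^\star}\eta_\epsilon$, passing to the limit through the continuity of $\gamma_{K,F^\star}\upd(\nabla\partial_i u\upII)$ as an element of $(W^{\frac{1}{\varrho},\varrho'}(F^\star))'$. What each buys: the paper's argument is shorter and purely local, but it hinges on an exact lifting whose normal-derivative datum $\chi_F$ need not vanish on $\partial F$, so the compatibility at the edges of $K$ is entirely delegated to the cited reference; your cutoff-and-limit step addresses exactly this endpoint issue and never manipulates $L_F^K$ directly, at the price of two verifications that should be made explicit: (i) for the truncated datum, which is smooth and supported in the relative interior of the planar face $F^\star$, an $H^2(\Omega)\cap H^1_0(\Omega)$ lifting does exist (e.g., a distance-function construction localized near $F^\star$); (ii) $\chi_{F^\star}\eta_\epsilon\to\chi_{F^\star}$ in $W^{\frac{1}{\varrho},\varrho'}(F^\star)$, which holds precisely because the product of the fractional order and the integrability index satisfies $\frac{\varrho'}{\varrho}<1$: the collar of width $\epsilon$ around $\partial F^\star$ gives $\|\chi_{F^\star}(1-\eta_\epsilon)\|_{W^{1/\varrho,\varrho'}(F^\star)}\le C\,\epsilon^{\frac{1}{\varrho'}-\frac{1}{\varrho}}\to0$. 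With these two points spelled out, your argument is complete and, on the localization step, somewhat more self-contained than the paper's.
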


\begin{proof}
(1) Proof of \eqref{eq:hat_n_sharp_u_I}. The identity is a simple consequence of \eqref{n sharp relation v in HHO} and the fact that
$\chi_F=0$ for all $F\in\Fb$ whenever $\hat{w}_h=(w_{\mesh},\chi_{\Fall}) \in \fesI$.

(2) Proof of \eqref{eq:hat_n_sharp_u_II}. Let us prove that for all $F\in\Fb$, we have
\begin{equation} \label{eq:null_on_Fb}
\sum_{\inod} \langle \gamma_{K,F}\upd(\nabla \partial_i u\upII), n_{F,i}  \chi_F  \rangle_F = 0,
\end{equation}
where $K\in\mesh$ is the unique mesh cell such that $F=\partial K\cap \partial\Omega$.
Invoking \cite[p.~17]{GirRa:86}, we infer that there is $\phi_{\chi} \in H^1_0(K)\cap H^2(K)$, $(\partial_{n} \phi_{\chi})_{|F'} = 0$ for all $F'\in \FKi$ and $(\partial_{n} \phi_{\chi})_{|F} = \chi_F$. Let $\tilde \phi_{\chi}$ denote the zero-extension of $\phi_\chi$ to $H^1_0(\Omega)\cap H^2(\Omega)$. Using $\tilde\phi_\chi$ as test function in the weak formulation~\eqref{weak_form_II} and since $\Delta u\in W^{1,q}(\Omega)$ and $\tilde\phi_{\chi}\in W^{1,q'}_0(\Omega)$, we obtain
\begin{align*}
0 &=  (\nabla^2 u\upII, \nabla^2\tilde\phi_{\chi})_{\Omega} + (\nabla \Delta u\upII, \nabla \tilde\phi_{\chi})_{\Omega} \\
&=  (\nabla^2 u\upII, \nabla^2\phi_{\chi})_{K} + (\nabla \Delta u\upII, \nabla \phi_{\chi})_{K}
= \sum_{\inod} \Big\{(\nabla \partial_i u\upII, \nabla (\partial_i \phi_{\chi}))_{K} + (\Delta \partial_i u\upII, \partial_i \phi_{\chi})_{K}\Big\}.
\end{align*}
Moreover, we have
\begin{align}
\sum_{\inod} \langle \gamma_{K,F}\upd(\nabla \partial_i u\upII),n_{F,i}  \chi_F ) \rangle_F
&= \sum_{\inod} \langle \gamma_{K,F}\upd(\nabla \partial_i u\upII),(\partial_i \phi_{\chi})_{|F} \rangle_F  \\
&=\sum_{\inod} \Big\{(\nabla \partial_i u\upII, \nabla L_F^K(\partial_i \phi_{\chi} ))_{K} + (\Delta \partial_i u\upII, L_F^K(\partial_i \phi_{\chi} ))_{K}\Big\},\nonumber
\end{align}
where we used that the tangential derivative of $\phi_\chi$ vanishes on $F$ and the definition~\eqref{Def: trace operator} of the normal operator $\gamma_{K,F}\upd$.
Subtracting the above two relations gives
\begin{align}
\sum_{\inod} \langle \gamma_{K,F}\upd(\nabla \partial_i u\upII), n_{F,i}  \chi_F ) \rangle_F
&=\sum_{\inod} \Big\{(\nabla \partial_i u\upII, \nabla \delta_i)_{K} + (\Delta \partial_i u\upII, \delta_i )_{K}\Big\},
\end{align}
with $\delta_i: = L_F^K(\partial_i \phi_{\xi} ) - \partial_i \phi_{\xi}$.
We observe that $\delta_i \in W_0^{1,\varrho'}(K)\cap L^{2}(K)$. Considering a sequence
$(\delta_{i,\epsilon})_{\epsilon>0}$ in $C^\infty_0(K)$ which converges to $\delta_{i}$ in $W^{1,\varrho'}(K)\cap L^{2}(K)$ as $\epsilon\to0$ and since
$\sum_{\inod} \Big\{(\nabla (\partial_i u\upII), \nabla \delta_{i,\epsilon})_{K} + (\Delta (\partial_i u\upII), \delta_{i,\epsilon} )_{K}\Big\} = 0$ for all $\epsilon>0$, we conclude that \eqref{eq:null_on_Fb} holds true. Summing over the mesh boundary faces and invoking~\eqref{n sharp relation v in HHO} proves~\eqref{eq:hat_n_sharp_u_II}.
\end{proof}

We close this section by stating a boundedness estimate on the bilinear form $\hat{n}_\sharp^{(4)}$. We omit the proof since it follows the arguments from \cite[Lemma 3.2]{ErnGuer2021}.

\begin{lemma}[Boundedness of $\hat{n}_\sharp^{(4)}$]
\label{lem:bnd_n_sharp}
The following holds for all $v\in (\calVq+\mathbb{P}_{k+2}^{\rm g}(\mesh))$ and all $\hat{w}_h \in \fesII$:
\begin{equation}\label{error bound of n}
|\hat{n}_\sharp^{(4)}(v,\hat{w}_h)| \leq C \bigg(\su h_K^{2d(\frac{1}{2}-\frac1p)}\|\nabla^2 v \|_{L^p(K)}^2  + {h_K^{2+2d(\frac{1}{2}-\frac1q)}\|\nabla \Delta v \|_{L^q(K)}^2}\bigg)^{\frac12} \bigg(\su h_K^{-1} \|\chi_{\partial K} - \partial_n w_K\|_{\dK}^2 \bigg)^{\frac12}.
\end{equation}
\end{lemma}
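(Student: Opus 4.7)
The plan is to emulate the proof of \cite[Lemma~3.2]{ErnGuer2021}, where the analogous bound was established in the second-order elliptic setting. The starting point is to unfold the definition~\eqref{eq:hat_n_sharp} of $\hat{n}_\sharp^{(4)}$. For every triple $(K,F,i)$ with $K\in\mesh$, $F\in\FK$, and $\inod$, set
\[
\phi_{K,F,i} := n_{F,i}\bigl(\n_F\SCAL(\nabla w_K)_{|F} - \chi_F\bigr) \in W^{\frac{1}{\varrho},\varrho'}(F),
\]
and invoke the definition~\eqref{Def: trace operator} of the normal trace $\gamma_{K,F}\upd$ with the choice $\bsigma = \nabla\partial_i v$, for which $\DIV\bsigma = \partial_i\Delta v \in L^q(K)$. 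This yields the cellwise identity
\[
\langle \gamma_{K,F}\upd(\nabla\partial_i v),\phi_{K,F,i}\rangle_F
= (\nabla\partial_i v,\nabla L_F^K(\phi_{K,F,i}))_K
+ (\partial_i \Delta v,\, L_F^K(\phi_{K,F,i}))_K.
\]

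The second step is to apply H\"older's inequality to each cell integral, using the conjugate pair $(p,p')$ on the first one and $(q,q')$ on the second one. This controls the first factors by $\|\nabla^2 v\|_{L^p(K)}$ and $\|\nabla\Delta v\|_{L^q(K)}$ (after the summation over $\inod$), leaving the lifting norms $\|\nabla L_F^K(\phi_{K,F,i})\|_{L^{p'}(K)}$ and $\|L_F^K(\phi_{K,F,i})\|_{L^{q'}(K)}$ to be estimated.

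The third step converts these two norms into $\|\phi_{K,F,i}\|_{L^2(F)}$ with the correct $h_K$ scalings. Since $\varrho>2$ forces $\varrho'\le 2$ and $\varrho\le p$ forces $p'\le \varrho'$, one can bound the $L^{p'}(K)$ norm of the lifting gradient by its $L^{\varrho'}(K)$ norm times $|K|^{\frac{1}{p'}-\frac{1}{\varrho'}}\simeq h_K^{d(\frac{1}{p'}-\frac{1}{\varrho'})}$. The $L^{q'}(K)$ norm of the lifting is controlled via the Sobolev embedding $W^{1,\varrho'}(K)\hookrightarrow L^{q'}(K)$, whose validity is exactly ensured by the constraint $q\ge \varrho d/(\varrho+d)$ built into the choice of $\varrho$. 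Combining these ingredients with the stability properties of $L_F^K$ from \cite[Lemma~3.1]{ErnGuer2021} together with one further H\"older inequality on $F$ (converting $L^{\varrho'}(F)$ into $L^2(F)$, allowed since $\varrho'\le 2$), all $h_K$-exponents collapse to give the per-triple bound
\[
|\langle \gamma_{K,F}\upd(\nabla\partial_i v),\phi_{K,F,i}\rangle_F|
\le C \Bigl( h_K^{d(\frac{1}{2}-\frac{1}{p})}\|\nabla^2 v\|_{L^p(K)}
+ h_K^{1+d(\frac{1}{2}-\frac{1}{q})}\|\nabla\Delta v\|_{L^q(K)} \Bigr)\, h_K^{-\frac{1}{2}}\,\|\phi_{K,F,i}\|_{L^2(F)}.
\]

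Finally, the claimed estimate follows by summing over $(K,F,i)$ and applying the discrete Cauchy--Schwarz inequality. Since $(\n_F\SCAL\n_K)^2=1$ and $|n_{F,i}|\le 1$, one has $|\phi_{K,F,i}(x)|\le |\partial_n w_K(x)-\chi_{\dK}(x)|$ a.e.\ on $F$ for every $\inod$, and summing over $F\in\FK$ reconstructs the $L^2(\dK)$ norm that appears in the right-hand side of the claimed bound. I expect the main obstacle to be the careful bookkeeping of the $h_K$-powers through the successive conversions between $L^p$, $L^q$, $L^{\varrho'}$, and $L^2$ norms on both $K$ and $F$; this step hinges crucially on the relation $q\ge \varrho d/(\varrho+d)$ that was built into the choice of the intermediate exponent $\varrho$ fixed at the beginning of Section~\ref{sec:identity_II}.
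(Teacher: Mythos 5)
Your proposal is correct and follows essentially the same route as the paper, which in fact omits the proof and simply refers to the arguments of \cite[Lemma~3.2]{ErnGuer2021}: unfolding the trace operator via the lifting $L_F^K$, H\"older with $(p,p')$ and $(q,q')$ on the cell, the embedding $W^{1,\varrho'}(K)\hookrightarrow L^{q'}(K)$ guaranteed by $q\ge \varrho d/(\varrho+d)$, and the collapse to $h_K^{-1/2}\|\chi_{\dK}-\partial_n w_K\|_{\dK}$ after summation; your $h_K$-bookkeeping in the per-triple bound is consistent. The only step you leave implicit is that bounding the face data in the lifting stability estimate also uses a polynomial inverse inequality to absorb the fractional seminorm $|\phi_{K,F,i}|_{W^{1/\varrho,\varrho'}(F)}$ into $h_F^{-1/\varrho}\|\phi_{K,F,i}\|_{L^{\varrho'}(F)}$ (legitimate since $\phi_{K,F,i}$ is a polynomial on $F$), exactly as in the cited lemma.
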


\subsection{Bound on consistency error}

We define the consistency errors $\delta_h\upI\in (\fesI)^\prime$ and $\delta_h\upII\in (\fesII)^\prime$ such that
\begin{subequations}\begin{alignat}{2}
\langle \delta_h\upI,\hat{w}_h \rangle &:= {\ell}(w_{\mesh}) - a_h(\Ihk(u\upI), \hat{w}_h),
&\qquad &\forall \hat{w}_h\in \fesI,\\
\langle \delta_h\upII,\hat{w}_h \rangle &:= {\ell}(w_{\mesh}) - a_h(\Ihk(u\upII), \hat{w}_h),
&\qquad &\forall \hat{w}_h\in \fesII,
\end{alignat}\end{subequations}
where the brackets refer to the duality pairing between $(\fesI)^\prime$ and $\fesI$
or between $(\fesII)^\prime$ and $\fesII$ depending on the context.
Recall that the spaces $\fesI$ and $\fesII$ are equipped with the norm $\|\SCAL\|_{\fes}$ defined
in~\eqref{H2_seminorm_glob}. Recall also that the $\|\SCAL\|_{\sharp,K}$-norm is defined in~\eqref{eq:def_sharp_II}.

\begin{lemma}[Consistency]\label{lemma: consistency simply supported}
	Let $\delta_h$ denote either $\delta_h\upI$ or $\delta_h\upII$,
	let $u$ denote either $u\upI$ or $u\upII$, and let
	$\fes$ denote either $\fesI$ or $\fesII$. Assume that $u\in\calVq$.
	The following holds:
	\begin{equation} \label{consistency simply supported}
		\langle \delta_h,\hat{w}_h \rangle
		\leq C
		\left( \su \|u-\interL_K^{k+2} (u)\|^2_{\sharp,K}\right)^{\frac12} \|\hat{w}_h\|_{\fes},
		\qquad \forall \hat{w}_h\in \fes.
	\end{equation}
\end{lemma}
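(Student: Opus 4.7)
The plan is to compute $\langle \delta_h,\hat{w}_h\rangle = \ell(w_{\mesh}) - a_h(\Ihk(u),\hat{w}_h)$ by rewriting each of the two terms so that they become comparable and cancel up to interpolation and stabilization residuals. For the continuous side, since $w_{\mesh}\in V_{\mesh,0}^{k+2}\subset H^2(\mesh)\cap H^1_0(\Omega)$ and $u\in \calVq$ by assumption, Lemma~\ref{Lemma:identity_II} yields
$$\ell(w_{\mesh}) = \langle \Delta^2 u, w_{\mesh}\rangle_{W^{-1,q},W^{1,q'}_0} = \su (\nabla^2 u,\nabla^2 w_K)_K - n_\sharp^{(4)}(u,w_{\mesh}).$$
The identity~\eqref{eq:hat_n_sharp_u_I} (if $u=u\upI$ and $\hat{w}_h\in\fesI$) or \eqref{eq:hat_n_sharp_u_II} (if $u=u\upII$ and $\hat{w}_h\in\fesII$) then replaces $n_\sharp^{(4)}(u,w_{\mesh})$ by $\hat{n}_\sharp^{(4)}(u,\hat{w}_h)$, treating both boundary-condition cases uniformly.

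For the discrete side, I would expand $a_h(\Ihk(u),\hat{w}_h)=\su (\nabla^2 J\uHHO_K(u),\nabla^2 R_K(\hat{w}_K))_K + \su S_{\dK}(\Ihk(u)_K,\hat{w}_K)$, using $R_K(\Ihk(u)_K)=J\uHHO_K(u)$. The pivotal step is to re-express $(\nabla^2 J\uHHO_K(u),\nabla^2 R_K(\hat{w}_K))_K$ in terms of $(\nabla^2 J\uHHO_K(u),\nabla^2 w_K)_K$ plus a boundary remainder. This is the cellwise version of the first identity in Lemma~\ref{Lemma:identity_hat_n_sharp}: its derivation only invokes piecewise smoothness of the first argument and the definition of $R_K$ applied to the $L^2$-orthogonal component of $J\uHHO_K(u)$ in $\mathbb{P}_{k+2}(K)^{\perp}$, so it carries over to $J\uHHO_h(u)$ (which is piecewise polynomial but not globally $C^0$) and delivers
$$\su (\nabla^2 J\uHHO_K(u),\nabla^2 R_K(\hat{w}_K))_K = \su (\nabla^2 J\uHHO_K(u),\nabla^2 w_K)_K - \hat{n}_\sharp^{(4)}(J\uHHO_h(u),\hat{w}_h).$$

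Subtracting these two representations gives $\langle\delta_h,\hat{w}_h\rangle$ as the sum of three contributions: the Galerkin-like term $\su (\nabla^2(u-J\uHHO_K(u)),\nabla^2 w_K)_K$; the consistency term $\hat{n}_\sharp^{(4)}(J\uHHO_h(u)-u,\hat{w}_h)$; and the stabilization term $-\su S_{\dK}(\Ihk(u)_K,\hat{w}_K)$. Each is handled by Cauchy--Schwarz. For the first, one uses Lemma~\ref{lem: approxmation} to bound $\su\|\nabla^2(u-J\uHHO_K(u))\|_K^2$ by $C\su\|u-\interL_K^{k+2}(u)\|_{\sharp,K}^2$, while $\su \|\nabla^2 w_K\|_K^2\le \|\hat{w}_h\|_{\fes}^2$. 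For the second, Lemma~\ref{lem:bnd_n_sharp} bounds $\hat{n}_\sharp^{(4)}(J\uHHO_h(u)-u,\hat{w}_h)$ by the $\|\cdot\|_{\sharp,K}$-norm of $J\uHHO_h(u)-u$ (which is again controlled via Lemma~\ref{lem: approxmation}) times a stabilization-type seminorm on $\hat{w}_h$ bounded by $\|\hat{w}_h\|_{\fes}$. For the third, Cauchy--Schwarz in $S_{\dK}$ together with the bound on $S_{\dK}(\Ihk(u)_K,\Ihk(u)_K)$ from Lemma~\ref{lem: approxmation} and the trivial bound $S_{\dK}(\hat{w}_K,\hat{w}_K)\le |\hat{w}_K|_{\fesE}^2$ closes the estimate.

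The main obstacle I expect is the bookkeeping in the pivotal rewriting of $a_h(\Ihk(u),\hat{w}_h)$: one must verify that the identity~\eqref{n sharp relation uh} really applies to $J\uHHO_h(u)$ even though this function is only piecewise polynomial, and one must be careful that the bilinear form $\hat{n}_\sharp^{(4)}$ indeed has the first argument extended to $\calVq+\mathbb{P}_{k+2}^{\rm g}(\mesh)$ (so that $J\uHHO_h(u)-u$ is an admissible argument), noting that cancellations of the form~\eqref{eq:n_sharp_Fi} are no longer available there. Once this is granted, the proof is essentially an orchestration of Lemmas~\ref{Lemma:identity_II}, \ref{Lemma:identity_hat_n_sharp}, \ref{lem:bnd_n_sharp}, and~\ref{lem: approxmation}.
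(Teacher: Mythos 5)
Your proposal is correct and follows essentially the same route as the paper's proof: the same use of Lemma~\ref{Lemma:identity_II} with \eqref{eq:hat_n_sharp_u_I}/\eqref{eq:hat_n_sharp_u_II} for $\ell(w_{\mesh})$, the identity \eqref{n sharp relation uh} applied to $J\uHHO_h(u)$ for $a_h(\Ihk(u),\hat w_h)$, the same three-term decomposition with $\eta=u-J\uHHO_h(u)$, and the same bounds via Lemma~\ref{lem:bnd_n_sharp} and Lemma~\ref{lem: approxmation}. Your concern about applying \eqref{n sharp relation uh} to the merely piecewise-polynomial $J\uHHO_h(u)$ is well placed and correctly resolved, since its proof is purely cellwise.
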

\begin{proof}
	Using the key identity \eqref{eq:identity_II} from Lemma~\ref{Lemma:identity_II} followed by the identity \eqref{eq:hat_n_sharp_u_I} or the identity \eqref{eq:hat_n_sharp_u_II} from Lemma~\ref{Lemma:identity_hat_n_sharp} depending on the context, we infer that for all $\hat{w}_h\in \fes$,
	\begin{equation*}
		\ell(w_{\mesh}) = \langle f,w_{\mesh} \rangle_{W^{-1,q},W_0^{1,q^\prime}} =\su  (\nabla^2 u, \nabla^2 {w}_K)_K- n_\sharp^{(4)}(u, w_{\mesh})
		= \su  (\nabla^2 u, \nabla^2 {w}_K)_K	- \hat{n}_\sharp^{(4)}(u,\hat{w}_h).
	\end{equation*}
	Moreover, using the identity \eqref{n sharp relation uh} from Lemma~\ref{Lemma:identity_hat_n_sharp} gives
	\begin{align*}
		a_h(\Ihk(u), \hat{w}_h) =
		\su \Big\{ (\nabla^2 J\uHHO_K(u), \nabla^2 {w}_K)_K
		+ S_{\dK}(\mathcal{\hat{I}}^k_K(u),\hat{w}_K) \Big\} - \hat{n}_\sharp^{(4)}(J\uHHO_h(u),\hat{w}_h).
	\end{align*}
	Thus, defining the function $\eta:=u-J\uHHO_h(u)$, i.e., $\eta|_K:=u|_K-J\uHHO_K(u_{|K})$ for all $K\in\mesh$, we infer that
	\begin{equation}\label{the error relation SP}
		\begin{aligned}
			\langle \delta_h,\hat{w}_h \rangle
			= {}& \su \Big\{ (\nabla^2 \eta, \nabla^2 {w}_K)_K
			- S_{\dK}(\mathcal{\hat{I}}^k_K(u),\hat{w}_K)\Big\}
			- \hat{n}_\sharp^{(4)}(\eta,\hat{w}_h).
		\end{aligned}
	\end{equation}
	Let us denote by $\mathcal{T}_{1}$ the first two terms on the right-hand side and by $\mathcal{T}_{2}$ the third addend. We bound $\mathcal{T}_{1,K}$ by the same arguments as above, yielding
	\begin{equation*}
		|\mathcal{T}_{1}|\le  C\left( \su \| \nabla^2 \eta \|^2_{K} + \| \nabla^2(u - \interL_K^{k+2}(u))\|_{K}^2
		\right)^{\frac12} \|\hat{w}_h\|_{\fes}.
	\end{equation*}
	Moreover, owing to~\eqref{error bound of n}, we have
	\begin{equation*}
		|\mathcal{T}_{2}| \le C \Big(\su h_K^{2d(\frac{1}{2}-\frac1p)}\|\nabla^2 \eta \|_{L^p(K)}^2  + {h_K^{2+2d(\frac{1}{2}-\frac1q)}\|\nabla \Delta \eta \|_{L^q(K)}^2}\Big)^{\frac12}\Big(\su h_K^{-1} \|\chi_{\partial K} - \partial_n w_K\|_{\dK}^2 \Big)^{\frac12}.
	\end{equation*}
	Altogether, this implies that
	\begin{equation*}
		|\langle \delta_h,\hat{w}_h \rangle| \le C\left( \su \| \eta \|^2_{\sharp,K} + \| \nabla^2(u - \interL_K^{k+2}(u))\|_{K}^2
		\right)^{\frac12} \|\hat{w}_h\|_{\fes}.
	\end{equation*}
	Invoking Lemma~\ref{lem: approxmation} completes the proof.
\end{proof}

\begin{remark}[Classical regularity assumption] \label{rem:classical_reg}
For completeness, let us briefly sketch how the consistency error is bounded under
the classical regularity assumption $u\in H^{2+s}(\Omega)$ and $f\in H^{-2+s}(\Omega)$, $s\in (\frac12,1]$. As above, we let $u$ denote either $u\upI$ or $u\upII$, and
$\fes$ denote either $\fesI$ or $\fesII$.
Starting from the identity from Lemma~\ref{Lemma:identity_I} and using that
$\partial_{nn}u$ is single-valued at every mesh interface $\FKi$ and that either $\partial_{nn}u$ or $\chi_{\dK}$ vanish at every mesh boundary face $F\in\FKb$ depending on the type of BC that is enforced, we infer that for all $\hat{w}_h\in \fes$,
\begin{equation*}
\langle f,w_{\mesh} \rangle_{H^{-2+s},H^{2-s}_0} =
\su \Big\{ (\nabla^2 u, \nabla^2 {w}_K)_K- (\partial_{nn}  u , \partial_{n} w_K  - \chi_{\dK})_{\dK} \Big\}.
\end{equation*}
Furthermore, using the identity $R_K\circ \mathcal{\hat{I}}^k_K = J\uHHO_K$ from
Lemma~\ref{lem: approxmation} together with the definition \eqref{eq: reconstruction} of
$R_K(\hat{w}_K)$ leads to
\begin{align*}
a_h(\Ihk(u), \hat{w}_h) ={}
\su \Big\{ (\nabla^2 J\uHHO_K(u), \nabla^2 {w}_K)_K
- (\partial_{nn} J\uHHO_K(u), \partial_{n} w_K  - \chi_{\dK})_{\dK}
+ S_{\dK}(\mathcal{\hat{I}}^k_K(u),\hat{w}_K) \Big\}.
\end{align*}
Defining the function $\eta$ cellwise as $\eta|_K:=u|_K-J\uHHO_K(u)$ for all $K\in\mesh$,
we infer that
\begin{align*}
\langle \delta_h,\hat{w}_h \rangle
:= {}& \langle f,w_{\mesh} \rangle_{H^{-2+s},H^{2-s}_0}  - a_h(\Ihk(u), \hat{w}_h) \\
= {}& \su \Big\{ (\nabla^2 \eta, \nabla^2 {w}_K)_K
- (\partial_{nn} \eta, \partial_{n} w_K  - \chi_{\dK})_{\dK}
- S_{\dK}(\mathcal{\hat{I}}^k_K(u),\hat{w}_K)\Big\}.
\end{align*}
Setting $\|v\|^2_{\sharp, K} := \|\nabla^2 v\|_{K}^2 +h_K \|\partial_{nn}  v \|_{\partial K}^2$, it is then straightforward to establish that
\begin{equation*}
\langle \delta_h,\hat{w}_h \rangle
\leq C \left( \su \|u-\interL_K^{k+2} (u)\|^2_{\sharp,K}\right)^{\frac12} \|\hat{w}_h\|_{\fes}.
\end{equation*}
\end{remark}

\subsection{Energy-error estimate}

We are now ready to establish our main convergence result bounding the error in the energy norm.
For completeness, improved error estimates in weaker norms are
outlined in Section~\ref{sec: weaker_norm_error}. Let us define the discrete error
$\hat{e}_h : = \Ihk(u) - \hat{u}_h \in \fes$ with local components $\hat{e}_K:=
\mathcal{\hat{I}}^k_K(u)-\hat{u}_K\in \fesE$ for all $K\in\mesh$.

\begin{theorem}[Error estimate]\label{Theorem:error_II}
Let $u$ denote either the weak solution $u\upI$ of \eqref{weak_form_I} or
the weak solution $u\upII$ of \eqref{weak_form_II}.
Let $\hat{u}_h$ denote either the discrete solution $\hat{u}_h\upI$ of \eqref{discrete_pb_I}
or the discrete solution $\hat{u}_h\upII$ of \eqref{discrete_pb_II}, respectively.
Under Assumption~\ref{ass:regularity}, the following holds:
\begin{equation}\label{eq:err1 Simply supported}
\su \Big\{ \|u- R_K(\hat{u}_K)\|_{\sharp,K}^2 + S_{\partial K}(\hat{e}_K,\hat{e}_K)
\Big\} \leq C \su \| u-\interL_K^{k+2}(u)\|_{\sharp,K}^2.
\end{equation}
Moreover, assuming $u\in H^{2+s}(\Omega)$ with $s>0$, and letting $t:= \min\{s,k+1\}$ as well as $\mathfrak{I}_t:=1$ if {$t < 1+d(\frac12-\frac1q)$} and $ \mathfrak{I}_t:=0$ otherwise, we have
\begin{equation}\label{eq:err2 Simply supported}
\su \Big\{  \|u- R_K(\hat{u}_K)\|_{\sharp,K}^2 +  S_{\partial K}(\hat{e}_K,\hat{e}_K) \Big\}
\leq C \su \Big( h_K^{t}|u|_{H^{t+2}(K)}
+ \mathfrak{I}_t {h_K^{1+d(\frac{1}{2}-\frac1q)}\|\nabla \Delta v \|_{L^q(K)}^2}\Big)^2.
\end{equation}
\end{theorem}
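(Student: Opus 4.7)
The plan is to chain a coercivity argument, the consistency bound of Lemma~\ref{lemma: consistency simply supported}, the interpolation bound of Lemma~\ref{lem: approxmation}, and finally standard polynomial approximation from Lemma~\ref{lemma:pol_app}. The common discrete error $\hat{e}_h:=\Ihk(u)-\hat{u}_h\in \fes$ lives in the same subspace as the test functions, which permits Galerkin orthogonality in the form $a_h(\hat{e}_h,\hat{w}_h)=-\langle \delta_h,\hat{w}_h\rangle$ for all $\hat{w}_h\in\fes$.

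First, I would use the coercivity inherent in Lemma~\ref{lem: stability and boundedness} (lower bound in \eqref{local equivalent} summed over $K\in\mesh$), which yields $\alpha\|\hat{e}_h\|_{\fes}^2 \le a_h(\hat{e}_h,\hat{e}_h)$. Since $\hat{u}_h$ solves the discrete problem and $\hat{e}_h\in\fes$, the right-hand side equals $-\langle \delta_h,\hat{e}_h\rangle$, which is bounded by $C\bigl(\su \|u-\interL_K^{k+2}(u)\|^2_{\sharp,K}\bigr)^{1/2}\|\hat{e}_h\|_{\fes}$ thanks to Lemma~\ref{lemma: consistency simply supported} (which applies since $u\in\calVq$ under Assumption~\ref{ass:regularity}). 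Dividing through gives the control
\begin{equation*}
\|\hat{e}_h\|_{\fes}^2 \;\le\; C\su \|u-\interL_K^{k+2}(u)\|^2_{\sharp,K},
\end{equation*}
and in particular $\su S_{\partial K}(\hat{e}_K,\hat{e}_K)$ is bounded by the same right-hand side by the definition of $\|\SCAL\|_{\fes}$.

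Next, to pass from $\|\hat{e}_h\|_{\fes}$ to $\|u-R_K(\hat{u}_K)\|_{\sharp,K}$, I would use the triangle inequality together with the identity $J\uHHO_K(u)=R_K(\mathcal{\hat{I}}^k_K(u))$ to split
\begin{equation*}
u-R_K(\hat{u}_K)=\bigl(u-J\uHHO_K(u)\bigr)+R_K(\hat{e}_K).
\end{equation*}
The first summand is controlled by $\|u-\interL_K^{k+2}(u)\|_{\sharp,K}$ by Lemma~\ref{lem: approxmation}. For the second, polynomial inverse estimates from Lemma~\ref{lemma: Inverse inequality} allow me to bound all three pieces of the $\sharp$-norm of the polynomial $R_K(\hat{e}_K)$ by $\|\nabla^2 R_K(\hat{e}_K)\|_K$, which in turn is bounded by $|\hat{e}_K|_{\fesE}$ via Lemma~\ref{lem: stability and boundedness}. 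Summing over $K\in\mesh$ and inserting the previous bound on $\|\hat{e}_h\|_{\fes}^2$ yields \eqref{eq:err1 Simply supported}.

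Finally, to obtain \eqref{eq:err2 Simply supported}, I would plug the three terms of $\|u-\interL_K^{k+2}(u)\|_{\sharp,K}^2$ into the polynomial approximation estimate \eqref{eq:pol_app} with $l=k+2$ and $r=2+s$. The first summand $\|\nabla^2(u-\interL_K^{k+2}(u))\|_K^2$ gives the clean rate $h_K^{2t}|u|_{H^{t+2}(K)}^2$ with $t=\min\{s,k+1\}$. For the $L^p$ piece, since $p>2$ is close to $2$, the $L^p$ norm of $\nabla^2(u-\interL_K^{k+2}(u))$ can be traded for a higher Sobolev seminorm, producing the same rate $h_K^{2t}|u|_{H^{t+2}(K)}^2$ after absorbing the prefactor $h_K^{2d(1/2-1/p)}$. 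The main obstacle is the term containing $\nabla\Delta u$: when the regularity exponent $t$ satisfies $t\ge 1+d(\tfrac12-\tfrac1q)$, the $W^{1,q}$-norm of $\Delta(u-\interL_K^{k+2}(u))$ is controlled by the Sobolev embedding $H^{t+2}\hookrightarrow W^{3,q}$ (locally) and produces the rate $h_K^t$; when $t$ is too small for this embedding to close, one cannot trade $W^{1,q}$ regularity for $H^{t+2}$ regularity, and the separate term $h_K^{1+d(1/2-1/q)}\|\nabla\Delta u\|_{L^q(K)}$ (which is finite by Assumption~\ref{ass:regularity}) must remain, whence the indicator $\mathfrak{I}_t$. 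The rest of the argument is a summation over $K\in\mesh$.
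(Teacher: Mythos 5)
Your proposal is correct and follows essentially the same route as the paper's proof: coercivity plus Galerkin orthogonality and Lemma~\ref{lemma: consistency simply supported} to control $\|\hat{e}_h\|_{\fes}$, the decomposition $u-R_K(\hat{u}_K)=(u-J\uHHO_K(u))+R_K(\hat{e}_K)$ handled via Lemma~\ref{lem: approxmation}, inverse inequalities, and the bounds in \eqref{local equivalent}, and finally Lemma~\ref{lemma:pol_app} for \eqref{eq:err2 Simply supported}. Your discussion of the $L^p$ and $\nabla\Delta$ terms (and of the indicator $\mathfrak{I}_t$) is in fact more detailed than the paper's one-line treatment of this last step.
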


\begin{proof}
(1) Let $\delta_h$ denote either $\delta_h\upI$ or $\delta_h\upII$,
and let $\fes$ denote either $\fesI$ or $\fesII$ depending on the context.
Since $a_h(\hat{e}_h,\hat{e}_h)= -\langle \delta_h,\hat{e}_h \rangle$, the coercivity of the bilinear form $a_h$ implies that
\begin{equation*}
\alpha \|\hat{e}_h\|_{\fes}^2 \le a_h(\hat{e}_h,\hat{e}_h)= -\langle \delta_h,\hat{e}_h \rangle
\le \|\delta_h\|_{(\fes)'} \|\hat{e}_h\|_{\fes},
\end{equation*}
so that $\|\hat{e}_h\|_{\fes}\le \frac{1}{\alpha}\|\delta_h\|_{(\fes)'}$.
Since $\su \{ \|\nabla^2 R_K(\hat{e}_K)\|_{\sharp,K}^2 + S_{\partial K}(\hat{e}_K,\hat{e}_K)\} \le C\su \{\|\nabla^2 R_K(\hat{e}_K)\|_K^2+S_{\partial K}(\hat{e}_K,\hat{e}_K)\} \le C \|\hat{e}_h\|_{\fes}^2$ owing to the inverse inequality \eqref{inverse inequality Lp} and the upper bound in~\eqref{local equivalent}, we infer from Lemma \ref{lemma: consistency simply supported} that
\begin{equation*}
\su \Big\{ \|\nabla^2 R_K(\hat{e}_K)\|_{\sharp, K}^2 + S_{\partial K}(\hat{e}_K,\hat{e}_K) \Big\} \leq C \su \|u-\interL_K^{k+2} (u)\|^2_{\sharp,K}.
\end{equation*}
Since $u-R_K(\hat{u}_K)=(u-J\uHHO_K(u))+R_K(\hat{e}_K)$,
the triangle inequality combined with Lemma~\ref{lem: approxmation} and the above bound
proves~\eqref{eq:err1 Simply supported}.

(2) The estimate \eqref{eq:err2 Simply supported} results from \eqref{eq:err1 Simply supported} and the approximation properties of $\interL_K^{k+2}$ established in Lemma~\ref{lemma:pol_app}.
\end{proof}

\begin{remark}[Variant] \label{rem:variant_H2}
Invoking inverse inequalities shows that for all $K\in\mesh$,
\begin{equation} \label{eq:bound_uK_RuK}
\|u_K- R_K(\hat{u}_K)\|_{\sharp,K} \le C\|\nabla^2(u_K- R_K(\hat{u}_K))\|_{K}
\le CS_{\partial K}(\hat{u}_K,\hat{u}_K),
\end{equation}
where the second bound follows from $(\nabla^2(u_K- R_K(\hat{u}_K)),\nabla^2w)_K =
(\partial_nu_K-\gamma_{\dK},\partial_{nn}w)_{\dK}$ for all $w\in \mathbb{P}_{k+2}(K)^\perp$,
a discrete trace inequality, $\partial_{nn}w\in \mathbb{P}_k(\FK)$, and the definition of the stabilization operator.
Moreover, the triangle inequality
combined with the bound~\eqref{Error bound in sharp norm for Clamped plate} from Lemma~\ref{lem: approxmation} implies that
\[
\frac{1}{2}S_{\partial K}(\hat{u}_K,\hat{u}_K) \le S_{\dK}(\mathcal{\hat{I}}^k_K(u),\mathcal{\hat{I}}^k_K(u)) + S_{\partial K}(\hat{e}_K,\hat{e}_K) \le C\| u-\interL_K^{k+2}(u)\|_{\sharp,K} + S_{\partial K}(\hat{e}_K,\hat{e}_K).
\]
This leads to the following variant of the above energy-error estimate:
\begin{equation}
\su \Big\{ \|\nabla^2(u- u_K)\|_{K}^2 + S_{\partial K}(\hat{u}_K,\hat{u}_K)
\Big\} \leq C \su \| u-\interL_K^{k+2}(u)\|_{\sharp,K}^2,
\end{equation}
which follows from~\eqref{eq:err1 Simply supported}, the triangle inequality, and the above bounds.
\end{remark}

\subsection{Energy-error estimate for $C^0$-IPDG methods}
\label{sec:C0IPDG}

To illustrate that the key identity \eqref{eq:identity_II} from Lemma~\ref{Lemma:identity_II} has a broader applicability than $C^0$-HHO methods, we briefly outline here how this identity can be used in the error analysis of $C^0$-IPDG methods.
Focusing for brevity on type (II) BC's only, the $C^0$-IPDG bilinear form is defined as follows
for all $v_h, w_h \in V_{\mesh,0}^{k+2} := \mathbb{P}_{k+2}^{\rm g}(\mesh)\cap H^1_0(\Omega)$:
\begin{equation} \begin{aligned}
		a_h^{\textsc{ipdg}}(v_h, w_h):=& \su (\nabla^2 v_h,\nabla^2 w_h)_K + \sum_{F\in \Fint} \varpi_F h_F^{-1} (\jump{\partial_n v_h}_F,\jump{\partial_n w_h}_F)_F\\
		& - \sum_{F\in \Fint} (\mean{\partial_{nn} v_h}_F,\jump{\partial_n w_h}_F)_F - \sum_{F\in \Fint} (\jump{\partial_n v_h}_F,\mean{\partial_{nn}w_h}_F)_F,
\end{aligned}\end{equation}
where all the normal derivatives are understood to be oriented by $\n_F$ and
with the user-defined penalty parameter $\varpi_F>0$. For type (I) BC's, the
three summations over the mesh interfaces are realized over the whole set of mesh faces.
The discrete problem consists of
finding $u_h \in V_{\mesh,0}^{k+2}$ such that
\begin{equation}
	a_h^{\textsc{ipdg}}(v_h, w_h)=\ell(w_h):= \langle f,w_h \rangle_{W^{-1,q},W_0^{1,q^{\prime}}}, \qquad \forall w_h\in V_{\mesh,0}^{k+2}.
\end{equation}
The stability analysis reveals that the bilinear form $a_h^{\textsc{ipdg}}$ is coercive on
$V_{\mesh,0}^{k+2}$ if the penalty parameters $\varpi_F$ are large enough; see, e.g., \cite{BrennerC0}. The coercivity norm is $\|w_h\|_{V_{\mesh,0}^{k+2}}^2:= \su \|\nabla^2 w_h\|_K^2 + \sum_{F\in \Fint}h_F^{-1}\|\jump{\partial_n w_h}_F\|_F^2$.

The novelty here consists in bounding the consistency error under Assumption~\ref{ass:regularity}.
Let the bilinear form $n_\sharp^{\textsc{ipdg}}$ be defined as $n_\sharp^{(4)}$, except that the contributions of all the mesh boundary faces $F\in\Fb$ are discarded. The arguments in the proof of \eqref{eq:hat_n_sharp_u_II} show that $n_\sharp^{\textsc{ipdg}}(u,w_h) = n_\sharp^{(4)}(u,w_h)$ for all $w_h\in V_{\mesh,0}^{k+2}$. Therefore, using the key identity \eqref{eq:identity_II} from Lemma~\ref{Lemma:identity_II} gives
\begin{equation}
	\langle f,w_h \rangle_{W^{-1,q},W^{1,q'}_0} =  \su  (\nabla^2 u, \nabla^2 {w}_h)_K- n_\sharp^{(4)}(u, w_h)= \su  (\nabla^2 u, \nabla^2 {w}_h)_K- n_\sharp^{\textsc{ipdg}}(u, w_h),
\end{equation}
for all $w_h\in V_{\mesh,0}^{k+2}$. Moreover,
using the same proof as for~\eqref{n sharp relation uh}, we establish the following identity for all $v_h, w_h \in V_{\mesh,0}^{k+2}$:
\begin{equation}
	n_\sharp^{\textsc{ipdg}}(v_h, w_h) = \sum_{F\in \Fint} (\mean{\partial_{nn} v_h}_F,\jump{\partial_n w_h}_F)_F.
\end{equation}
Putting everything together and setting $\eta:= u-\interL_h^{k+2}(u)$, we can express the consistency error as follows:
\begin{align}
	\langle \delta_h,w_h \rangle
	:= {}& \ell(w_h) - a_h^{\textsc{ipdg}}(\interL_h^{k+2}(u),w_h)
	\nonumber \\
	= {}& \su (\nabla^2 \eta, \nabla^2 {w}_h)_K
	+ \sum_{F\in \Fint} \varpi_F h_F^{-1}
	(\jump{\partial_n \eta}_F,\jump{\partial_n w_h}_F)_F \nonumber \\
	&- n_\sharp^{\textsc{ipdg}}(\eta,w_h) - \sum_{F\in \Fint} (\jump{\partial_n \eta}_F,\mean{\partial_{nn} w_h}_F)_F,
\end{align}
where we used that $\jump{\partial_n u}_F=0$ for all $F\in\Fint$.
Using the same $\|\SCAL\|_{\sharp,K}$-norm as for the $C^0$-HHO method, and invoking the Cauchy--Schwarz inequality, Lemma~\ref{lem:bnd_n_sharp} to bound $n_\sharp^{\textsc{ipdg}}(\eta,w_h)$, and the estimate~\eqref{eq:tr_PK} to estimate $\partial_n\eta$ at the mesh interfaces, we infer that
\begin{equation}
	\langle \delta_h,w_h \rangle
	\leq C
	\left( \su \|u-\interL_K^{k+2} (u)\|^2_{\sharp,K}\right)^{\frac12} \|w_h\|_{V_{\mesh,0}^{k+2}},
	\qquad \forall w_h\in  V_{\mesh,0}^{k+2}.
\end{equation}
Finally, proceeding as above for the $C^0$-HHO method readily leads to the same error estimates.

\subsection{Improved error estimates in weaker norms} \label{sec: weaker_norm_error}

To derive error estimates in weaker norms, we assume that the following regularity pickup holds true: There exists a constant $C_{\Omega}$ such that for all $g\in H^\sigma(\Omega)$, $\sigma\in\{-1,0\}$, the adjoint solution such that $\Delta^2 \zeta_g = g$ in $\Omega$ with either type (I) or (II) BC's satisfies the bound
\begin{equation} \label{eq:pickup}
\|\zeta_g\|_{H^{4+\sigma}(\Omega)} \leq C_{\Omega} \|g\|_{H^\sigma(\Omega)}, \qquad \forall \sigma\in\{-1,0\}.
\end{equation}
A sufficient condition for \eqref{eq:pickup} to hold with type (I) BC's is that the domain $\Omega$ is convex; see \cite[p.~182]{Kozlov-Mazya:1996}.
In what follows, we assume $f\in L^2(\Omega)$ so that the above regularity pickup implies that $u\in H^4(\Omega)$. Hence, the setting of Assumption~\ref{ass:regularity} is not needed here, and we simply use the $\|\cdot\|_{\sharp,K}$-norm already considered in Remark~\ref{rem:classical_reg}, that is, $\|v\|^2_{\sharp, K} := \|\nabla^2 v\|_{K}^2 +h_K \|\partial_{nn}  v \|_{\partial K}^2$ for all $K\in\mesh$. We also set $\|v\|_{\sharp,h}^2:=\su \|v_{|K}\|_{\sharp,K}^2$. Let $h:=\max_{K\in \mesh} h_K$ denote the (global) mesh size and let $\tilde{h}$ be the piecewise constant function such that $\tilde{h}|_K:=h_K$ for all $K\in\mesh$.

\begin{theorem}[$L^2$-and $H^1$-error estimate]\label{Theorem:error_L2_H1}
Let $u$ denote either the weak solution $u\upI$ of \eqref{weak_form_I} or
the weak solution $u\upII$ of \eqref{weak_form_II}.
Let $\hat{u}_h$ denote either the discrete solution $\hat{u}_h\upI$ of \eqref{discrete_pb_I}
or the discrete solution $\hat{u}_h\upII$ of \eqref{discrete_pb_II}, respectively.
Assume $f\in  L^{2}(\Omega)$. Then, if the regularity estimate~\eqref{eq:pickup} holds true
with $\sigma=0$, we have
\begin{equation}\label{eq:L2err error}
\su \|u- R_K(\hat{u}_K)\|_{K}^2 \leq Ch^{2\min(k+1,2)} \Big( \|\hat{e}_h\|_{\fes}^2 + \| u-\interL_h^{k+2}(u)\|_{\sharp,h}^2 + \|\tilde{h}^2(f-\ \Pi^{k-2}_{h}(f))\|_{\Omega}^2 \Big),
\end{equation}
and if the regularity estimate~\eqref{eq:pickup} holds true
with $\sigma=-1$, we have
\begin{equation}\label{eq:H1err2 error}
\su \|\nabla (u- R_K(\hat{u}_K))\|_{K}^2 \leq Ch^2 \Big( \|\hat{e}_h\|_{\fes}^2 + \| u-\interL_h^{k+2}(u)\|_{\sharp,h}^2 + \|\tilde{h}^2(f-\ \Pi^{k-2}_{h}(f))\|_{\Omega}^2 \Big).
\end{equation}
\end{theorem}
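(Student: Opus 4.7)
The plan is to carry out an Aubin--Nitsche duality argument, preceded by a reduction to the $C^0$-conforming cell component of $\hat u_h$. Since $u_\mesh = (u_K)_{K\in\mesh} \in V_{\mesh,0}^{k+2} \subset H^1_0(\Omega)$, I would start from
\begin{equation*}
\su \|u-R_K(\hat u_K)\|_K^2 \;\le\; 2\|u - u_\mesh\|_\Omega^2 \,+\, 2\su\|u_K-R_K(\hat u_K)\|_K^2.
\end{equation*}
The second sum is easy: the defining relation of $R_K$ makes $u_K - R_K(\hat u_K)$ $L^2$-orthogonal to $\mathbb{P}_1(K)$, so a scaled Bramble--Hilbert argument on polynomials yields $\|u_K - R_K(\hat u_K)\|_K \le Ch_K^2\|\nabla^2(u_K - R_K(\hat u_K))\|_K \le Ch_K^2\,S_{\partial K}(\hat u_K,\hat u_K)^{1/2}$, and Remark~\ref{rem:variant_H2} bounds the sum of the stabilizations by $C(\|\hat e_h\|_{\fes}^2 + \|u-\interL_h^{k+2}(u)\|_{\sharp,h}^2)$. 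This contribution is therefore already $\le Ch^4$ times the right-hand side of \eqref{eq:L2err error}, well within the target. The whole difficulty lies in estimating $\|u - u_\mesh\|_\Omega$ (for the $L^2$-bound) and its broken-gradient counterpart (for the $H^1$-bound).

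For this I would set $g := u - u_\mesh \in L^2(\Omega)$ in the $L^2$ case ($\sigma=0$), or $g \in H^{-1}(\Omega)$ defined by $\langle g,\phi\rangle := (\nabla(u-u_\mesh),\nabla\phi)_\Omega$ for $\phi \in H^1_0(\Omega)$ in the $H^1$ case ($\sigma=-1$; the pairing is licit since $u-u_\mesh\in H^1_0(\Omega)$), and let $\zeta$ solve $\Delta^2\zeta = g$ with the BCs of $u$. The regularity pickup \eqref{eq:pickup} gives $\zeta \in H^{4+\sigma}(\Omega)$ with $\|\zeta\|_{H^{4+\sigma}(\Omega)} \le C_\Omega\|g\|_{H^\sigma(\Omega)}$. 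I would then apply the key identity of Lemma~\ref{Lemma:identity_I} with $\zeta$ in place of $v$ (justified since $\zeta \in H^{4+\sigma}$ is well above the $H^{2+s}$, $s>\tfrac12$, threshold) against the $C^0$-conforming test $u - u_\mesh$; the interface terms collapse thanks to the single-valuedness of $\partial_{nn}\zeta$, $\partial_n\Delta\zeta$, and $\partial_{nt}\zeta$ together with the BCs on $\zeta$, leaving only the normal-derivative jumps $[\![\partial_n(u - u_\mesh)]\!]_F$ paired with $\partial_{nn}\zeta$. The HHO face unknowns $\chi_F$ of $\hat u_h$ are inserted at no cost owing again to the single-valuedness of $\partial_{nn}\zeta$.

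Adding and subtracting $a_h(\hat e_h,\Ihk(\zeta))$ and invoking the discrete equation $a_h(\hat e_h,\Ihk(\zeta)) = -\langle\delta_h,\Ihk(\zeta)\rangle$ reduces the bound to three contributions: the dual consistency error controlled by Lemma~\ref{lemma: consistency simply supported} applied with $\zeta$ in place of $u$ (giving $C\|\zeta - \interL_h^{k+2}(\zeta)\|_{\sharp,h}\,\|\hat e_h\|_{\fes}$); an HHO-reconstruction residual on $\zeta$ (Lemma~\ref{lem: approxmation}); and a data residual. Lemma~\ref{lemma:pol_app} then supplies $\|\zeta - \interL_h^{k+2}(\zeta)\|_{\sharp,h} \le Ch^{\min(k+1,2+\sigma)}\|\zeta\|_{H^{4+\sigma}}$. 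The data residual equals $\|\tilde h^2(f-\Pi^{k-2}_h f)\|_\Omega$ and arises when comparing $\ell(\interL_h^{k+2}(\zeta)) = (f,\interL_h^{k+2}(\zeta))_\Omega$ with $(f,\zeta)_\Omega$: cellwise integration by parts of $(f,\zeta-\interL_K^{k+2}(\zeta))_K$ and the fact that $\Delta^2$ maps $\mathbb{P}_{k+2}(K)$ into $\mathbb{P}_{k-2}(K)$ allow one to replace $f$ by $f-\Pi^{k-2}_K f$ via $L^2$-orthogonality of $\Pi^{k-2}_K$, after which the $L^2$-approximation of $\zeta-\interL_K^{k+2}(\zeta)$ (Lemma~\ref{lemma:pol_app}) supplies the $h_K^2$ factor. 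Collecting the estimates, using $\|\zeta\|_{H^{4+\sigma}} \le C_\Omega\|g\|_{H^\sigma}$, and dividing through by $\|g\|_{H^\sigma}$ (with $\|g\|_{L^2} = \|u - u_\mesh\|_\Omega$ or $\|g\|_{H^{-1}} \le \|\nabla(u-u_\mesh)\|_\Omega$) yields \eqref{eq:L2err error} and \eqref{eq:H1err2 error}.

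The main obstacle is the interface bookkeeping when invoking Lemma~\ref{Lemma:identity_I} against the broken test $u - u_\mesh$: the $C^0$-conformity of $u_\mesh$ kills value jumps, but the normal-derivative jumps $[\![\partial_n u_\mesh]\!]_F$ must be paired with the single-valued $\partial_{nn}\zeta$ and then reabsorbed into the HHO discrete/consistency machinery through the face unknowns $\chi_F$; this parallels, in dual form, the primal consistency proof (Lemma~\ref{lemma: consistency simply supported}), so no fundamentally new technique is required, but the manipulations are technically delicate. A secondary subtlety is the integration-by-parts step producing the data oscillation, which relies specifically on the polynomial degree $k+2$ of the cell unknowns and the resulting image $\mathbb{P}_{k-2}(K)$ under $\Delta^2$, and which justifies the appearance of $\Pi^{k-2}_h$ in the estimate.
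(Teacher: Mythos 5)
Your overall strategy (duality with the regularity pickup, reduction of $\|u_K-R_K(\hat u_K)\|_K$ via the stabilization, dual consistency paired with $\hat e_h$) matches the skeleton of the paper's argument, but two steps in your sketch do not go through with the Lagrange-based operators $\interL_h^{k+2}$, $\Ihk$, $J\uHHO_K$ that you use throughout, and both failures point to the same missing ingredient. First, the data-oscillation term: you claim that $(f,\zeta-\interL_K^{k+2}(\zeta))_K$ can be turned into $(f-\Pi_K^{k-2}(f),\zeta-\interL_K^{k+2}(\zeta))_K$ by ``cellwise integration by parts'' and the fact that $\Delta^2$ maps $\mathbb{P}_{k+2}(K)$ into $\mathbb{P}_{k-2}(K)$. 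There is no integration by parts available against $f\in L^2(\Omega)$, and the insertion of $\Pi_K^{k-2}$ requires precisely that $\zeta-\interL_K^{k+2}(\zeta)$ be $L^2(K)$-orthogonal to $\mathbb{P}_{k-2}(K)$, which the Lagrange interpolant does not satisfy. The paper resolves this by replacing $\interL_K^{k+2}$ with the canonical hybrid finite element interpolant $\interC_K^{k+2}$, whose cell-moment property \eqref{cell property} gives exactly this orthogonality, so that $T_3=(f,\zeta_e-\interC_h^{k+2}(\zeta_e))_\Omega=(f-\Pi_h^{k-2}(f),\zeta_e-\interC_h^{k+2}(\zeta_e))_\Omega$. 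Without this, your route only yields $\|f\|_\Omega$ in place of $\|\tilde h^2(f-\Pi_h^{k-2}(f))\|_\Omega$, which is not the stated estimate.

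Second, and more seriously for the rates: after you use the key identity for $\zeta$, the dual consistency bound, and the two discrete equations, you are left with a cross term of the form $(\nabla^2 u,\nabla^2\zeta)_\Omega-a_h(\cdot,\cdot)$ evaluated at interpolants of $u$ and $\zeta$ (the paper's $T_2$). To gain the extra factor $h^{\min(k+1,2)}$ this term must factor into a \emph{product} of the primal and dual interpolation errors, and this is exactly what fails for the Lagrange-based HHO interpolation: $(\nabla^2(u-J\uHHO_K(u)),\nabla^2 w)_K\neq 0$ for $w\in\mathbb{P}_{k+2}(K)^\perp$, so the splitting leaves single-factor remainders of size $O(h^{k+1})$ and $O(h^{\min(k+1,2)})$ and the duality gain is lost. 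The paper's proof hinges on constructing the new reduction $\widehat{\mathcal{P}}^k_K(v)=(\interC_K^{k+2}(v),\Pi_{\dK}^k(\n_K\SCAL\nabla v))$ and proving that $T\uHHO_K=R_K\circ\widehat{\mathcal{P}}^k_K$ is an $H^2$-elliptic projection, i.e.\ identity \eqref{eq:H2_ell_proj}, whose proof again uses the vertex/edge/face/cell moments of the canonical hybrid element; this identity is what lets $T_2$ be rewritten as $\su(\nabla^2(u-T\uHHO_K(u)),\nabla^2(\zeta_e-T\uHHO_K(\zeta_e)))_K-S_{\dK}(\cdot,\cdot)$ and bounded by the product of the two $\|\cdot\|_{\sharp,K}$ interpolation errors (via \eqref{Error bound in sharp norm for elliptic projection} and \eqref{eq:inter_C_L}). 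Your proposal never introduces this operator, so the central mechanism of the improved estimates is missing; the remaining differences (you dualize against $u-u_\mesh$ while the paper dualizes against $e_\mesh=u_\mesh-\interC_h^{k+2}(u)$ and reassembles by the triangle inequality at the end) are cosmetic and would not rescue the argument.
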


To prove Theorem~\ref{Theorem:error_L2_H1}, we need to consider a novel HHO interpolation
operator. For all $K\in \mesh$, let $\interC_{K}^{k+2}:H^2(K)\rightarrow \mathbb{P}_{k+2}(K)$ be
the local interpolation operator associated with the canonical hybrid finite element in $K$; see \cite[section 7.6]{Ern_Guermond_FEs_I_2021} (this is the finite element considered in the first step of the discrete de Rham diagram). Recall that for $d=3$ and a function $v\in H^2(K)$, $\interC_{K}^{k+2}(v)$ is uniquely defined by the following properties:
\begin{subequations} \label{eq:def_canonical_FEM} \begin{alignat}{2}
\label{vertex property}
\interC_{K}^{k+2}(v) (\vtx) & =  v(\vtx), &\qquad& \forall \vtx \in \mathcal{V}_K, \\
\label{edge property}
(\interC_{K}^{k+2}(v), \xi_E)_E &=  (v, \xi_E)_E, &\qquad& \forall \xi_E \in\mathbb{P}_{k}(E), \forall E \in \mathcal{E}_K,\\
\label{face property}
(\interC_{K}^{k+2}(v), \xi_F)_F &=  (v, \xi_F)_F, &\qquad& \forall \xi_F \in\mathbb{P}_{k-1}(F), \forall F \in \mathcal{F}_K,\; k\ge1,\\
\label{cell property}
(\interC_{K}^{k+2}(v), \xi_K)_K &=  (v, \xi_K)_K, &\qquad& \forall   \xi_K \in\mathbb{P}_{k-2}(K), \; k\ge 2,
\end{alignat} \end{subequations}
where $\mathcal{V}_K$, $\mathcal{E}_K$, and $\mathcal{F}_K$ collect the vertices, edges, and faces of $K$, respectively. For $d=2$, $\interC_{K}^{k+2}(v)$ is defined similarly by using \eqref{vertex property}, \eqref{edge property}, and \eqref{cell property} with $\xi_K \in\mathbb{P}_{k-1}(K)$, $k\ge1$. The following holds for all $K\in\mesh$ and all $v\in H^3(K)$:
\begin{equation} \label{eq:inter_C_L}
\| v - \interC_K^{k+2}(v)\|_{\sharp,K} \le C \| v - \interL_K^{k+2}(v)\|_{\sharp,K}.
\end{equation}
This follows from the triangle inequality, inverse inequalities yielding $\| \interC_K^{k+2}(v) - \interL_K^{k+2}(v)\|_{\sharp,K}\le C\| \nabla^2(\interC_K^{k+2}(v) - \interL_K^{k+2}(v))\|_{K}$, and the $H^2$-stability of $\interC_K^{k+2}$ together with the fact that $\interC_K^{k+2}(v) - \interL_K^{k+2}(v) = \interC_K^{k+2}(v- \interL_K^{k+2}(v))$.

We then define the new local HHO reduction operator $\widehat{\mathcal{P}}^k_K : H^2(K) \rightarrow \fesE$ such that, for all $v\in H^2(K)$,
\begin{equation}\label{def: new reduction operator}
	\widehat{\mathcal{P}}^k_K(v):= (\interC_{K}^{k+2}(v), \Pi_{\dK}^k (\n_K{\cdot}\nabla v)) \in \fesE.
\end{equation}
as well as the new local HHO interpolation operator such that
\begin{equation}
T\uHHO_K:= R_K \circ \widehat{\mathcal{P}}^k_K: H^2(K) \rightarrow \mathbb{P}_{k+2} (K).
\end{equation}
The main motivation for the above construction is the following result.

\begin{lemma}[New HHO interpolation operator]
The following holds for all $K\in\mesh$ and all $v\in H^2(K)$:
\begin{equation} \label{eq:H2_ell_proj}
(\nabla^2 T\uHHO_K(v), \nabla^2 w)_{K} = (\nabla^2 v, \nabla^2 w)_{K},
\qquad \forall w \in \mathbb{P}_{k+2}(K)^\perp.
\end{equation}
Moreover, we have for all $K\in\mesh$ and all $v\in H^3(K)$,
\begin{equation}\label{Error bound in sharp norm for elliptic projection}
\|v - T\uHHO_K (v)\|_{\sharp, K}^2 + S_{\dK}(\widehat{\mathcal{P}}^k_K(v),\widehat{\mathcal{P}}^k_K(v))
\leq
C \| v - \interL_K^{k+2}(v)\|_{\sharp,K}^2.
\end{equation}
\end{lemma}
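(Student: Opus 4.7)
The plan is to prove the two assertions of the lemma separately, leveraging the orthogonality properties of the canonical hybrid finite element $\interC_K^{k+2}$. For the $H^2$-elliptic projection property~\eqref{eq:H2_ell_proj}, I would first unfold $T\uHHO_K = R_K \circ \widehat{\mathcal{P}}^k_K$ using the definition~\eqref{eq: reconstruction} of $R_K$ applied to $\widehat{\mathcal{P}}^k_K(v)=(\interC_K^{k+2}(v), \Pi_{\dK}^k(\partial_n v))$, and then drop $\Pi_{\dK}^k$ in the boundary term since $\partial_{nn} w|_F \in \mathbb{P}_k(F)$ for every $w\in \mathbb{P}_{k+2}(K)$ and every face $F\in\FK$. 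Setting $\psi := \interC_K^{k+2}(v) - v$, the property reduces to the identity
\begin{equation*}
(\nabla^2 \psi, \nabla^2 w)_K = (\partial_n \psi, \partial_{nn} w)_{\dK}, \qquad \forall w\in \mathbb{P}_{k+2}(K)^\perp.
\end{equation*}

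To establish this identity, I would apply the integration-by-parts formula~\eqref{eq:ipp2} after swapping the roles of $\psi$ and $w$ (so that all high-order derivatives fall on the polynomial $w$), which reduces the claim to showing that the three extra terms $(\Delta^2 w, \psi)_K$, $(\partial_n \Delta w, \psi)_{\dK}$, and $(\partial_{nt} w, \partial_t \psi)_{\dK}$ all vanish. The first term is killed by the cell-moment property~\eqref{cell property} since $\Delta^2 w \in \mathbb{P}_{k-2}(K)$ (identically zero for $k<2$). The second is killed face-by-face by~\eqref{face property} (resp.\ by~\eqref{edge property} when $d=2$), since $\partial_n \Delta w|_F \in \mathbb{P}_{k-1}(F)$. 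For the tangential term I would perform one tangential integration by parts on each face $F$: for $d=3$ this produces a face integrand $\Delta_F(\partial_n w)\in \mathbb{P}_{k-1}(F)$ (killed by~\eqref{face property}) together with edge integrands $\partial_{\nu_E}(\partial_n w)\in \mathbb{P}_k(E)$ (killed by~\eqref{edge property}); for $d=2$ it produces a vertex contribution (killed by~\eqref{vertex property}) together with a remaining integrand in $\mathbb{P}_{k-1}(F)$ (killed by~\eqref{edge property}). The low-$k$ cases $k\in\{0,1\}$ need separate checks, but the polynomial spaces involved are so small that the argument simplifies.

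For the approximation estimate, I would first observe that~\eqref{eq:H2_ell_proj} extends verbatim to all $w\in \mathbb{P}_{k+2}(K)$, since both sides vanish on the complementary subspace $\mathbb{P}_1(K)$. Choosing $w := T\uHHO_K(v) - \interL_K^{k+2}(v)$ and applying the Cauchy--Schwarz inequality yields the quasi-optimality bound $\|\nabla^2 (T\uHHO_K(v) - \interL_K^{k+2}(v))\|_K \le \|\nabla^2(v - \interL_K^{k+2}(v))\|_K$. The discrete inverse inequalities from Lemma~\ref{lemma: Inverse inequality} ensure that on the polynomial space $\mathbb{P}_{k+2}(K)$ the full $\|\SCAL\|_{\sharp, K}$-norm is controlled by $\|\nabla^2 \SCAL\|_K$, so the triangle inequality yields the bound on $\|v - T\uHHO_K (v)\|_{\sharp, K}$. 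For the stabilization term, using $\Pi^k_{\dK}\circ \Pi^k_{\dK} = \Pi^k_{\dK}$ together with the $L^2$-stability of $\Pi^k_{\dK}$ reduces the bound to controlling $h_K^{-1}\|\partial_n(v - \interC_K^{k+2}(v))\|^2_{\dK}$. The key remark is that $\interL_K^1\circ \interC_K^{k+2} = \interL_K^1$ by vertex agreement~\eqref{vertex property}, so applying~\eqref{eq:PK} to $v - \interC_K^{k+2}(v)$ and combining the multiplicative trace inequality~\eqref{trace inequality} with~\eqref{eq:inter_C_L} closes the argument.

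The hard part will be the vanishing of the tangential term $(\partial_{nt} w, \partial_t \psi)_{\dK}$; this is the step where the full suite of canonical degrees of freedom (vertex values, edge moments, face moments, cell moments) is indispensable and must be carefully matched with the polynomial degrees of the various derivatives of $w$. This is precisely the reason for introducing the new interpolation $\widehat{\mathcal{P}}^k_K$ built on $\interC_K^{k+2}$ rather than relying on the Lagrange-based operator $\mathcal{\hat{I}}^k_K$, for which the analogue of~\eqref{eq:H2_ell_proj} would fail.
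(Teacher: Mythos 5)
Your proof is correct and follows essentially the paper's route: the projection property \eqref{eq:H2_ell_proj} is obtained by the same double integration by parts (cellwise, then tangentially on each face, with vertex contributions for $d=2$ and edge contributions for $d=3$) so that the leftover terms are exactly matched, degree by degree, by the canonical hybrid degrees of freedom \eqref{eq:def_canonical_FEM}, and the estimate \eqref{Error bound in sharp norm for elliptic projection} is obtained as in Lemma~\ref{lem: approxmation} (using that $\interC_K^{k+2}$ preserves vertex values so the analogue of \eqref{eq:tr_PK} holds) combined with \eqref{eq:inter_C_L}. Your only, harmless and slightly cleaner, deviation is in the $H^2$ part of the second bound, where you exploit the just-proved orthogonality to get $\|\nabla^2(T\uHHO_K(v)-\interL_K^{k+2}(v))\|_K\le\|\nabla^2(v-\interL_K^{k+2}(v))\|_K$ directly by Cauchy--Schwarz, whereas the paper reruns the discrete-trace argument of Lemma~\ref{lem: approxmation} with $\interC_K^{k+2}$ and then invokes \eqref{eq:inter_C_L}.
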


\begin{proof}
(1) Proof of \eqref{eq:H2_ell_proj}. Using the definition of the reconstruction operator $R_K$, the identity $\nabla \cdot \nabla^2 = \nabla \Delta$ and integration by parts gives
\begin{align*}
&(\nabla^2 T\uHHO_K (v), \nabla^2 w)_{K} \\
&=(\nabla^2  \interC_{K}^{k+2}(v), \nabla^2 w)_{K}
- (\partial_n \interC_{K}^{k+2}(v) - \Pi_{\dK}^k (\n_K{\cdot}\nabla v),  \partial_{nn}  w)_{\dK}\\
&=-(\nabla  \interC_{K}^{k+2}(v), \nabla \Delta w)_{K}
+ (\Pi_{\dK}^k (\n_K{\cdot}\nabla v),  \partial_{nn}  w)_{\dK}
+  (\partial_t \interC_{K}^{k+2}(v),  \partial_{nt}  w)_{\dK}\\
&=( \interC_{K}^{k+2}(v),  \Delta^2 w)_{K}
-(\interC_{K}^{k+2}(v), \n_K {\cdot} \nabla \Delta w)_{\dK}
+ (\Pi_{\dK}^k (\n_K{\cdot}\nabla v),  \partial_{nn}  w)_{\dK}
+  (\partial_t \interC_{K}^{k+2}(v),  \partial_{nt}  w)_{\dK}.
\end{align*}
Integrating by parts the last term on the right-hand side gives (for $d=3$)
\[
(\partial_t \interC_{K}^{k+2}(v),  \partial_{nt}  w)_{\dK}
=  \sum_{F\in \mathcal{F}_K}  \Big\{ -( \interC_{K}^{k+2}(v),  \partial_{t} (\partial_{nt}  w))_{F}
+ \sum_{E\subset \partial F} ( \interC_{K}^{k+2}(v),  \n_F {\cdot} \partial_{nt}  w)_{E}\Big\}.
\]
Owing to~\eqref{eq:def_canonical_FEM} and since $w\in \mathbb{P}_{k+2}(K)$, we infer that
\[
(\nabla^2 T\uHHO_K(v), \nabla^2 w)_{K}
=( v,  \Delta^2 w)_{K}
-(v, \n_K {\cdot} \nabla \Delta w)_{\dK}
+ ( \n_K{\cdot}\nabla v,  \partial_{nn}  w)_{\dK}
+  ( \partial_t v,  \partial_{nt}  w)_{\dK},
\]
whence we deduce from integration by parts that \eqref{eq:H2_ell_proj} holds true.
\\
(2) Proof of~\eqref{Error bound in sharp norm for elliptic projection}. Proceeding as in Lemma \ref{lem: approxmation} yields
\[ \|v - T\uHHO_K (v)\|_{\sharp, K}^2 + S_{\dK}(\widehat{\mathcal{P}}^k_K(v),\widehat{\mathcal{P}}^k_K(v))
\leq
C \| v - \interC_K^{k+2}(v)\|_{\sharp,K}^2,
\]
and we conclude owing to~\eqref{eq:inter_C_L}.
\end{proof}

We are now ready to prove Theorem~\ref{Theorem:error_L2_H1}. The proof proceeds similarly to
that of \cite[Thm.~10]{DiPEL:14} on the $L^2$-error estimate for second-order elliptic PDEs, but
requires some nontrivial adaptations using the above tools. We detail the proof of~\eqref{eq:L2err error} and only outline the minor changes needed to prove~\eqref{eq:H1err2 error}.

\begin{proof}[Proof of~\eqref{eq:L2err error}]
Let $\widehat{\mathcal{P}}_h^k:H^2(\Omega)\to \fes$ be defined such that,
for all $v\in H^2(\Omega)$,
\[
\widehat{\mathcal{P}}_h^k(v):=\big( \interC_h^{k+2}(v),(\Pi_F^{k}(\n_F{\cdot}(\nabla v)_{|F}))_{F\in\Fall} \big)\in \fes,
\]
where $\interC_h^{k+2}:H^2(\Omega)\rightarrow \mathcal{P}_{k+2}(\mesh)$ is the global interpolation operator associated with the canonical hybrid finite element.
Notice that the local components of $\widehat{\mathcal{P}}_h^k(v)$ are
$\widehat{\mathcal{P}}^k_K(v|_K)$ for all $K\in\mesh$.
Let $\hat{u}_h:=(u_\mesh,\gamma_\Fall)\in\fes$ be the discrete solution.
Let us set
\[
\hat{e}_h := \hat{u}_h - \widehat{\mathcal{P}}_h^k(u),
\]
so that, setting $\hat{e}_h:=(e_{\mesh},\theta_{\Fall})$, we have $e_K= u_K - \interC_K^{k+2}(u)$ for all $K\in\mesh$ and $\theta_F = \gamma_F - \Pi_F^k (\n_F{\cdot}\nabla u)$ for all $F\in\Fall$.
\\
(1) Let $\zeta_e$ be the adjoint solution such that $\Delta^2 \zeta_e = e_\mesh$ in $\Omega$
with the appropriate type of BC's. Owing to our regularity assumption with $\sigma=0$, we have
$\|\zeta_e\|_{H^4(\Omega)} \le C_\Omega \|e_\mesh\|_\Omega$.
Integration by parts gives
\begin{align*}
\|e_{\mesh}\|_{\Omega}^2
={}&(e_{\mesh},\Delta^2 \zeta_e)_{\Omega} = -(\nabla e_{\mesh},\nabla \Delta \zeta_e)_{\Omega}
=  -(\nabla e_{\mesh},\nabla{\cdot}\nabla^2\zeta_e)_{\Omega} \\
={}& \su \Big\{ (\nabla^2 e_{\mesh},\nabla^2 \zeta_e)_{K} - (\partial_{n}  e_{\mesh},\partial_{nn} \zeta_e)_{\dK} - (\partial_{t}  e_{\mesh},\partial_{nt} \zeta_e)_{\dK} \Big\}.
\end{align*}
The last summation on the right-hand side vanishes for both types of BC's. Therefore, we have
\begin{align*}
\|e_{\mesh}\|_{\Omega}^2
={}& \su \Big\{ (\nabla^2 e_{\mesh},\nabla^2 \zeta_e)_{K} - (\partial_{n}  e_{\mesh},\partial_{nn} \zeta_e)_{\dK}\Big\} \\
={}& \su \Big\{ (\nabla^2 e_{\mesh},\nabla^2 \zeta_e)_{K} - (\partial_{n}  e_{\mesh}- \theta_{\Fall},\partial_{nn} \zeta_e)_{\dK}\Big\} \\
={}& \su \Big\{ (\nabla^2 e_{\mesh},\nabla^2 T\uHHO_K(\zeta_e))_{K} - (\partial_{n}  e_{\mesh}- \theta_{\Fall},\partial_{nn} \zeta_e)_{\dK}\Big\},
\end{align*}
where we used that $\su(\theta_{\Fall},\partial_{nn} \zeta_e)_{\dK}=0$ for both types of BC's on the second line and the identity~\eqref{eq:H2_ell_proj} on the third line.
Since
\[ (\nabla^2 R_K(\hat{e}_K),\nabla^2 T\uHHO_K(\zeta_e))_{K}  = (\nabla^2 e_K,\nabla^2 T\uHHO_K(\zeta_e))_{K} - (\partial_{n}  e_{K}- \theta_{\dK},\partial_{nn} T\uHHO_K(\zeta_e))_{\dK},
\]
straightforward algebra using the definition of $\hat{e_h}$, that $\hat{u}_h$ solves the discrete HHO problem, and that $(f,\zeta_e)_{\Omega} = (\nabla^2 u, \nabla^2 \zeta_e)_{\Omega}$ gives
$\|e_{\mesh}\|_{\Omega}^2 = T_1+T_2-T_3$ with
\begin{align*}
T_1:= {}&  -\su \Big\{ S_{\dK}(\hat{e}_K,\widehat{\mathcal{P}}^k_K(\zeta_e))  - (\partial_{n}  e_K- \theta_{\dK},\partial_{nn} (\zeta_e-T\uHHO_K(\zeta_e)))_{\dK} \Big\}, \\
T_2:= {}&  (\nabla^2 u, \nabla^2 \zeta_e)_{\Omega}-  a_h(\widehat{\mathcal{P}}^k_h(u),\widehat{\mathcal{P}}^k_h(\zeta_e)), \\
T_3:= {}& (f, \zeta_e - \interC_{h}^{k+2}(\zeta_e))_{\Omega} = (f- \Pi_{h}^{k-2} (f), \zeta_e - \interC_{h}^{k+2}(\zeta_e))_{\Omega},
\end{align*}
where the last equality for $T_3$ follows from \eqref{cell property} (with the convention that $\Pi_{h}^{k-2} (f)=0$ for $k\in\{0,1\}$). It remains to bound the above three terms.
\\
(2) Owing to the Cauchy--Schwarz inequality and \eqref{Error bound in sharp norm for elliptic projection}, we have
$$
|T_1| \leq C \|\hat{e}_h\|_{\fes} \|\zeta_e - \interL_{h}^{k+2}(\zeta_e)\|_{\sharp,h}.
$$
Moreover, since $\|\zeta_e -\interL_{h}^{k+2}(\zeta_e) \|_{\sharp,h}\leq Ch^{\min(k+1,2)}|\zeta_e|_{H^{\min(k+3,4)}(\Omega)}$ and $|\zeta_e|_{H^{\min(k+3,4)}(\Omega)}\leq  C_{\Omega} \|e_{\mesh}\|_{\Omega}$ by our assumption on regularity pickup, we infer that
$$
|T_1| \leq Ch^{\min(k+1,2)} \|\hat{e}_h\|_{\fes} \|e_{\mesh}\|_{\Omega}.
$$
Furthermore, using the definition of $a_h$ and the identity~\eqref{eq:H2_ell_proj} yields
\begin{align*}
T_2 =&  \su \Big\{ (\nabla^2 (u-T\uHHO_K(u)), \nabla^2 (\zeta_e - T\uHHO_K(\zeta_e)))_{K} - S_{\dK}(\widehat{\mathcal{P}}^k_K(u),\widehat{\mathcal{P}}^k_K(\zeta_e)) \Big\}.
\end{align*}
Using the Cauchy--Schwarz inequality and the same arguments as for $T_1$ gives
$$
|T_2| \leq Ch^{\min(k+1,2)}  \|u -\interL_h^{k+2}(u)\|_{\sharp,h} \|e_{\mesh}\|_{\Omega} .
$$
Finally, we have
$$
|T_3|\leq \|\tilde{h}^2(f- \Pi_{h}^{k-2}(f))\|_{\Omega} \|\tilde{h}^{-2}(\zeta_e - \interC_{h}^{k+2}(\zeta_e)))\|_{\Omega} \leq  Ch^{\min(k+1,2)} \|\tilde{h}^2(f- \Pi_{h}^{k-2}(f))\|_{\Omega}  \|e_{\mesh}\|_{\Omega}.
$$
Putting everything together gives
\begin{equation}\label{bound: discrete error}
 \|e_{\mesh}\|_{\Omega} \leq Ch^{\min(k+1,2)} \Big( \|\hat{e}_h\|_{\fes} + \|u -\interL_h^{k+2}(u) \|_{\sharp,h}+ \|\tilde{h}^2(f- \Pi_{h}^{k-2}(f))\|_{\Omega}\Big).
\end{equation}

(3) Using the triangle inequality and the definition of $e_\mesh$ gives
\begin{align*}
\su \|u- R_K(\hat{u}_K)\|_{K}^2 \leq  &  C \su \Big\{ \| e_K\|_{K}^2 +  \|u_K- R_K(\hat{u}_K)\|_{K}^2 + \|u- \interC_{K}^{k+2}(u))\|_{K}^2  \Big\}.
\end{align*}
The first term on the right-hand side is bounded using~\eqref{bound: discrete error}.
The second term is bounded by the Poincar\'e inequality,~\eqref{eq:bound_uK_RuK}, and the triangle inequality giving
\begin{align*}
\|u_K-R_K(\hat{u}_K)\|_{K} \leq   Ch_K^2 \|\nabla^2 (u_K-R_K(\hat{u}_K))\|_{K} &\leq
Ch_K^2	S_{\dK}(\hat{u}_K,\hat{u}_K) \\ & \le Ch_K^2\big(S_{\dK}(\hat{e}_K,\hat{e}_K)
+ \|u-\interL_K^{k+2}(u)\|_{\sharp,K}\big).
\end{align*}
Finally, observing that $\interL_K^1$ leaves $(u-\interC_K^{k+2}(u))$ invariant,
invoking~\eqref{eq:PK} and~\eqref{eq:inter_C_L} gives
\[
\|u-\interC_K^{k+2}(u)\|_K \le Ch_K^2\|\nabla^2 (u-\interC_K^{k+2}(u))\|_K
\le Ch_K^2\|u-\interL_K^{k+2}(u)\|_{\sharp,K}.
\]
Putting everything together proves~\eqref{eq:L2err error} since $h^2 \le Ch^{min(k+1,2)}$.
\end{proof}

\begin{proof}[Proof of~\eqref{eq:H1err2 error}]
The only salient change with respect to the previous proof is the definition of the adjoint solution which now satisfies $\Delta^2 \zeta_e = \Delta e_\mesh \in H^{-1}(\Omega)$ with the appropriate type of BC's (recall that $e_\mesh\in H^1_0(\Omega)$ by definition). Proceeding as above and invoking the regularity pickup assumption~\eqref{eq:pickup} with $\sigma=-1$ then gives
\begin{align*}
\|\nabla e_\mesh\|_\Omega^2 = \langle \Delta e_\mesh, e_{\mesh} \rangle_{H^{-1},H^{1}_0}   &  \leq Ch \Big( \|\hat{e}_h\|_{\fes} + \|u -\interL_h^{k+2}(u) \|_{\sharp,h}+ \|\tilde{h}^2(f- \Pi_{h}^{k-2}(f))\|_{\Omega}\Big)|\zeta_e|_{H^{3}(\Omega)} \\
 & \leq Ch \Big( \|\hat{e}_h\|_{\fes} + \|u -\interL_h^{k+2}(u) \|_{\sharp,h}+ \|\tilde{h}^2(f- \Pi_{h}^{k-2}(f))\|_{\Omega}\Big)\|\Delta e_\mesh\|_{H^{-1}(\Omega)}.
\end{align*}
The assertion finally follows from $\|\Delta e_\mesh\|_{H^{-1}(\Omega)} = \|\nabla e_\mesh\|_\Omega$.
\end{proof}

\begin{remark}[Variants and decay rates]
Proceeding as in Remark~\ref{rem:variant_H2} shows that the bounds \eqref{eq:L2err error} and \eqref{eq:H1err2 error} hold true also with the left-hand side replaced by $\su\|u-u_K\|_K^2$ and $\su\|\nabla(u-u_K)\|_K^2$, respectively. Concerning the right-hand side of \eqref{eq:L2err error} and \eqref{eq:H1err2 error}, the first term is controlled by the second term owing to Theorem~\ref{Theorem:error_II}, so that both terms converge at rate $\mathcal{O}(h^{2(k+1)})$. Moreover, the last term converges at rate $\mathcal{O}(h^{2\max(2,k+1)})$ (i.e., at the same rate for $k\ge1$ and at one order faster for $k=0$). Taking into account the scaling factor in front of the parenthesis and taking the square root, we infer that the $L^2$-error converges at rate $\mathcal{O}(h^2)$ for $k=0$, which is suboptimal by one order in $h$, whereas it converges at the optimal rate $\mathcal{O}(h^{k+3})$ for all $k\ge1$. Finally, the $H^1$-error converges at the optimal rate $\mathcal{O}(h^{k+2})$ for all $k\ge0$.
\end{remark}

\section{Numerical examples} \label{sec: numerical examples}

In this section, we present numerical examples illustrating our theoretical results on the convergence of the $C^0$-HHO method. We also compare the numerical performance of the $C^0$-HHO method to other (classical) $C^0$-conforming methods from the literature.  All the computations were run with \texttt{Matlab R2021b} on the Cleps platform at INRIA Paris using 12 cores (all offering the same computational performance in terms of RAM and frequency), and all the linear systems after static condensation (if applicable) are solved using the \verb*|backslash| function (invoking Cholesky's factorization).

\subsection{Convergence rates for the $C^0$-HHO method}
\label{sec:res_poly}

We select $f$ and type (I) BC's on $\Omega:=(0,1)^2$ so that the exact solution is
$$
u(x,y) := \sin(\pi x)^2  \sin(\pi y)^2+e^{-(x-0.5)^2-(y-0.5)^2}.
$$
We employ the polynomial degrees $k\in\{0,\ldots,4\}$ and a sequence of successively refined triangular meshes consisting of $\{32,128,512,2048,8192,32768\}$ cells. Despite an $hp$-error analysis falls beyond the present scope, we weigh the stabilization terms in \eqref{def: stabilisation} by replacing $h_K^{-1}$ by $(k+1)^2 h_K^{-1}$ for all $K\in\mesh$.

Let us first verify the convergence rates obtained with the $C^0$-HHO method
with $k\in\{0,1,2,3,4\}$. We consider a sequence of successively refined triangular meshes.
We measure errors in the (broken) $H^2$-seminorm,  in the $H^1$-seminorm, in the $L^2$-norm, and in the stabilization seminorm. The first three errors are evaluated using the reconstruction of the HHO solution cellwise.
The errors are reported in Figure \ref{ex1:h-refine}, and the rates are reported in Table \ref{ex1: table: compute the convergence rate} as a function of
$\mathrm{DoFs}^{1/2}$, where $\mathrm{DoFs}$ denotes the total number of globally coupled
discrete unknowns (that is, the face unknowns in addition to the cell unknowns except the bubble functions). We observe that the $H^2$-error,  $H^1$-error, and stabilization error converge at the optimal rates $O(h^{k+1})$, $O(h^{k+2})$, and $O(h^{k+1})$, respectively,  as expected.
The $L^2$-error converges at the optimal rate $O(h^{k+3})$, except for $k=0$ where the rate
is only $O(h^2)$. All these rates are consistent with the analysis presented in the previous section.
We also notice that some errors and decay rates are not reported on the finest meshes for the higher polynomial degrees. The reason is that a stagnation of the error at levels around $10^{-8}$ is observed owing to the poor conditioning of the linear system (see below). Notice that stagnation does not affect the stabilization error.

\begin{figure}[t]
	\centering
	\includegraphics[scale=0.32]{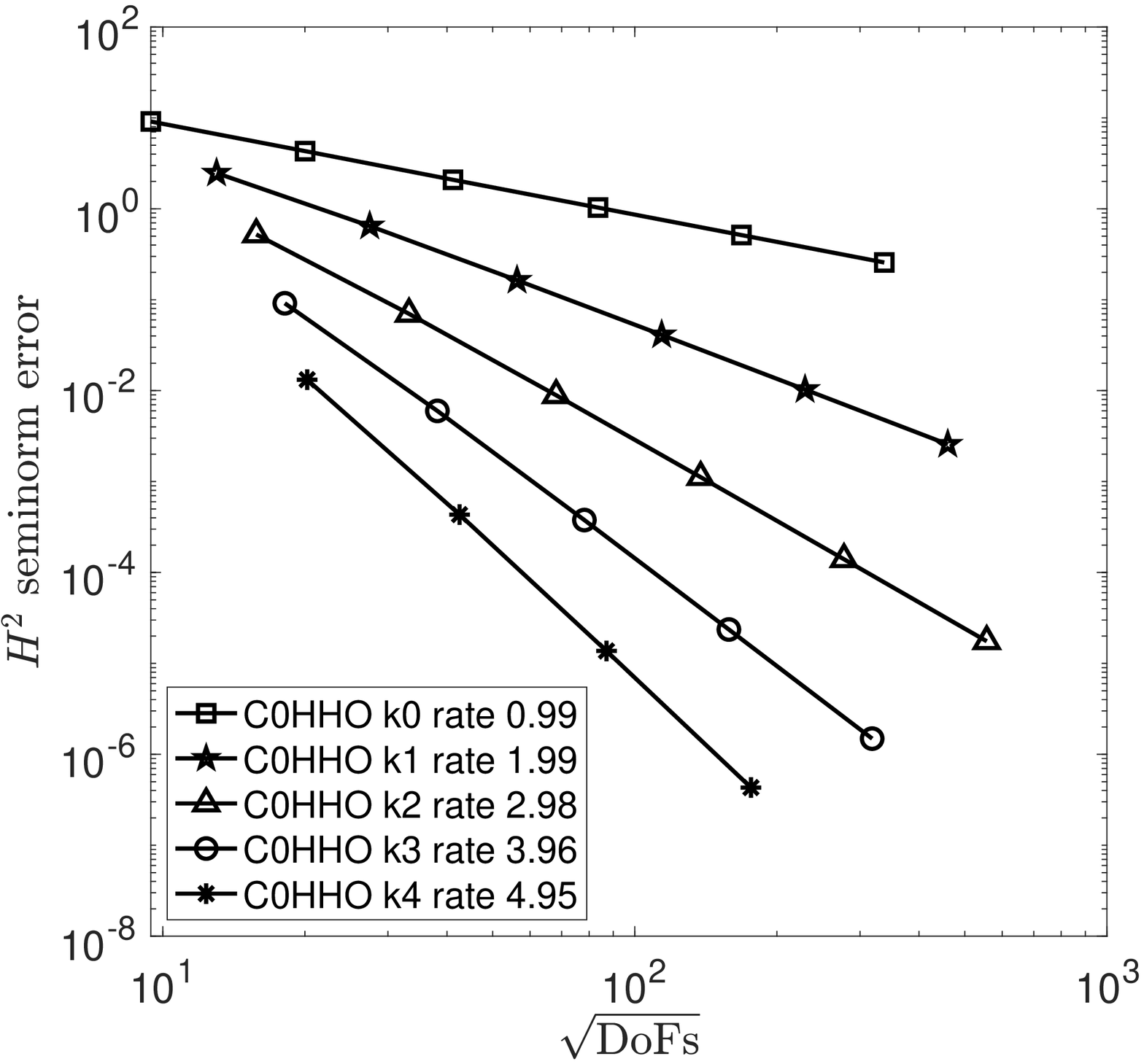}	\includegraphics[scale=0.32]{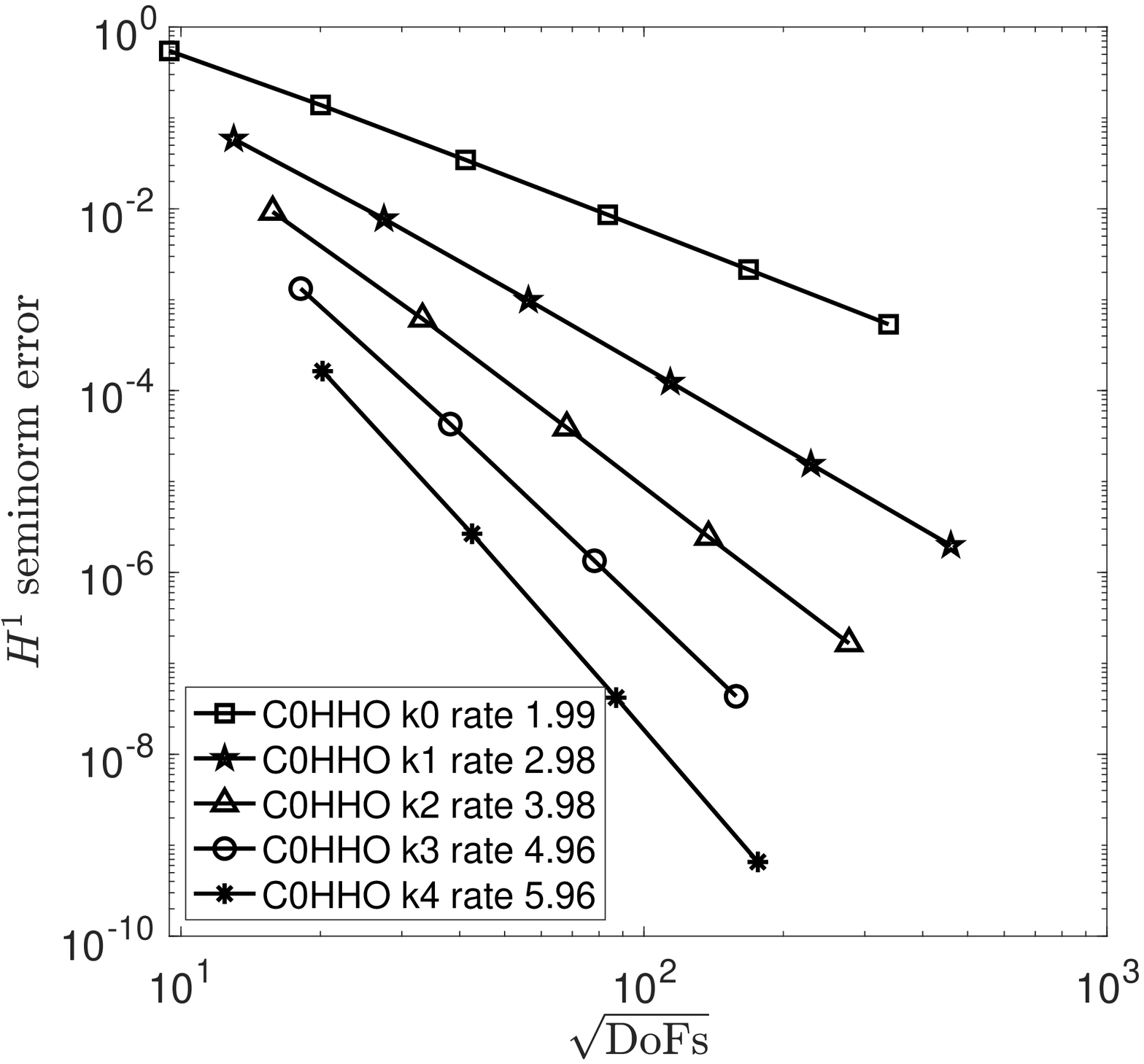}\\
	\includegraphics[scale=0.32]{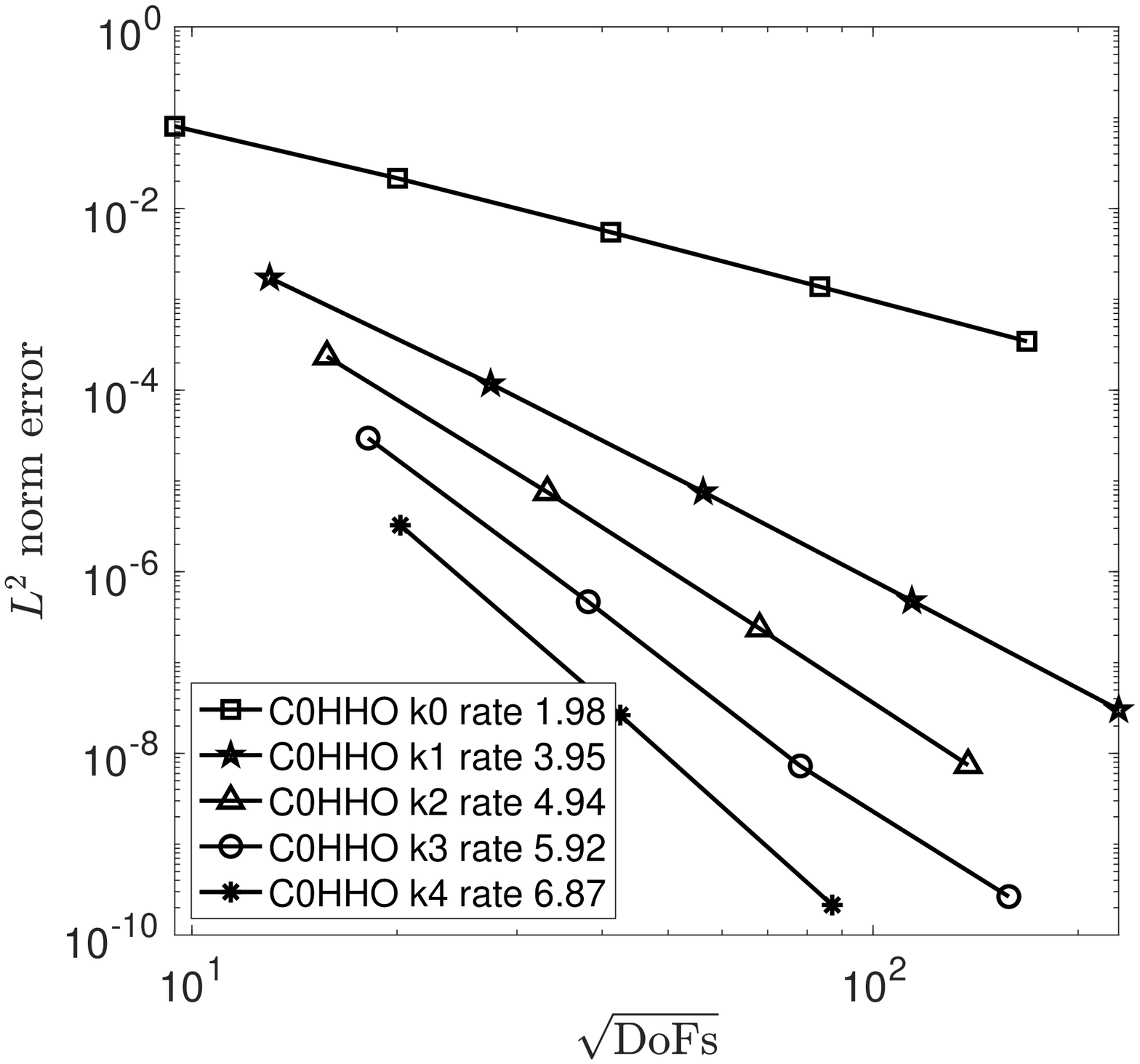}
	\includegraphics[scale=0.32]{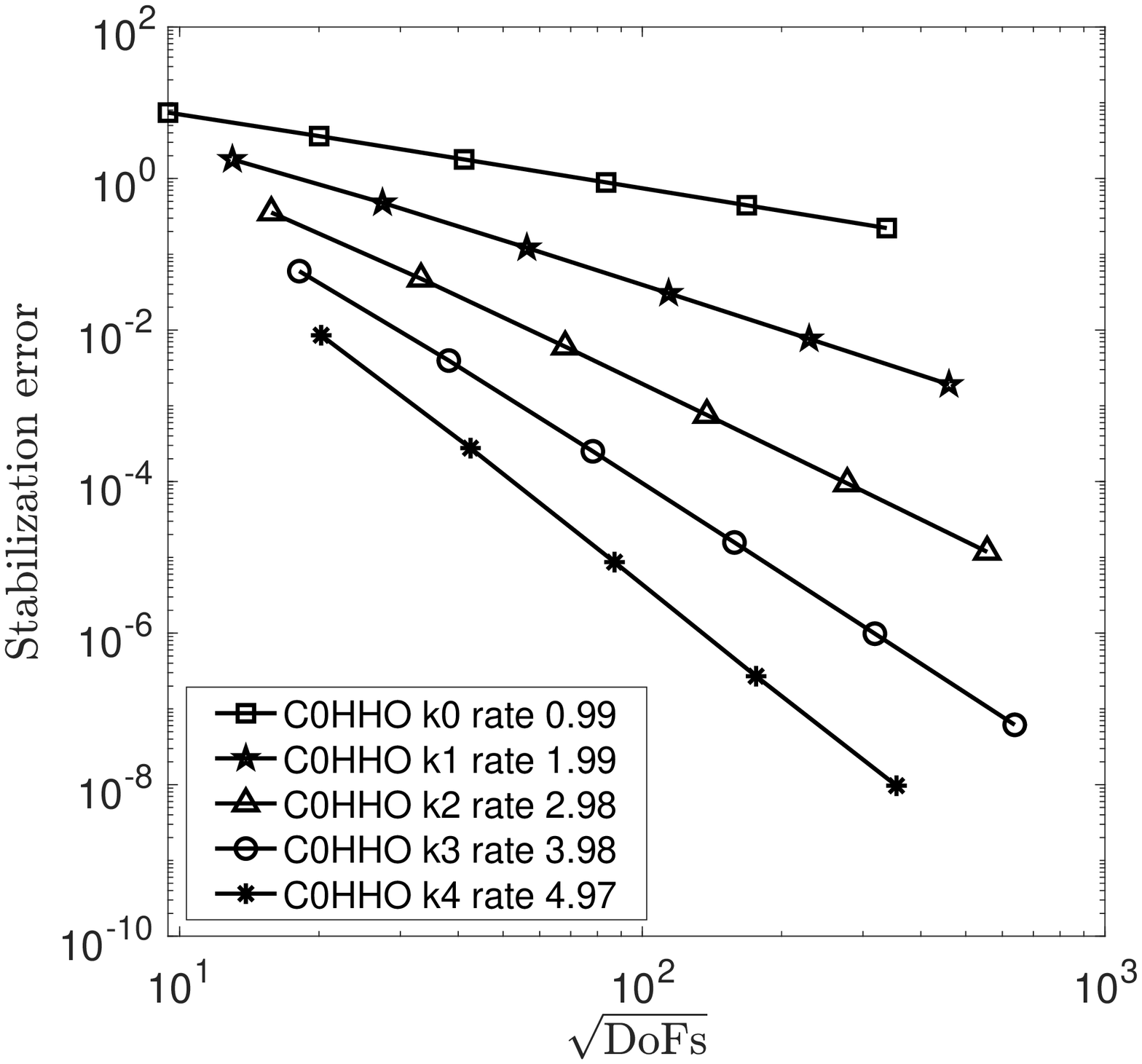}
	\caption{
		\label{ex1:h-refine}
		Convergence of $C^0$-HHO in $H^2$-, $H^1$-, $L^2$-(semi)norms, and stabilization seminorm on triangular meshes.}
\end{figure}

\begin{table}[!htb]
		\centering
		\begin{tabular}{|c|c|c|c|c|c|c|c|c|c|}
			\hline
			$H^2$ error& \multicolumn{2}{|c|}{$k=0$}    & \multicolumn{2}{|c|}{$k=1$} 	& \multicolumn{2}{|c|}{$k=2$}  & \multicolumn{2}{|c|}{$k=3$} 		   \\
			\hline
			$\# \mesh$
			& $\#$ \text{DoFs}  & \text{ rate } & $\#$ \text{DoFs} & \text{ rate } & $\#$ \text{DoFs}  & \text{ rate } & $\#$ \text{DoFs} & \text{ rate }\\
			\hline
			32 & 137  & --- & 281 & --- & 457 & --- & 665 & --- \\
			\hline
			128 & 497   & 0.99& 1041&1.62 & 1713 & 2.69 & 2513 &  3.66\\
			\hline
			512 &  1889  & 1.00& 4001  & 1.91 & 6625 & 2.87& 9761  & 3.86\\
			\hline
			2048 & 7361  & 0.99 & 15681& 1.96 & 26049  & 2.94 & 38465& 3.93 \\
			\hline
			8192 & 29057  & 0.98& 62081 & 1.98 & 103197 & 2.97 & 152705 & 3.96\\
			\hline
			32768 & 115457  & 0.99& 247041  & 1.99 & 411393 & 2.99 & 608513 & -\\
			\hline
\hline
			$H^1$ error& \multicolumn{2}{|c|}{$k=0$}    & \multicolumn{2}{|c|}{$k=1$} 	& \multicolumn{2}{|c|}{$k=2$}  & \multicolumn{2}{|c|}{$k=3$} 		   \\
			\hline
			$\# \mesh$
			& $\#$ \text{DoFs}  & \text{ rate } & $\#$ \text{DoFs} & \text{ rate } & $\#$ \text{DoFs}  & \text{ rate } & $\#$ \text{DoFs} &\text{ rate }\\
			\hline
			32 & 137  & --- & 281 & --- & 457 & --- & 665 & --- \\
			\hline
			128 & 497   & 1.82& 1041&2.71 & 1713 & 3.63 & 2513 &  4.66\\
			\hline
			512 &  1889  & 1.93& 4001  & 2.89 & 6625 & 3.87& 9761  & 4.85\\
			\hline
			2048 & 7361  & 1.96 & 15681& 2.95 & 26049  & 3.94 & 38465& 4.96 \\
			\hline
			8192 & 29057  & 1.98& 62081 & 2.97 & 103197 & 3.98 & 152705 & -\\
			\hline
			32768 & 115457  & 1.99& 247041  & 2.98 & 411393 & - & 608513 & -\\
			\hline
			\hline
			$L^2$ error & \multicolumn{2}{|c|}{$k=0$}    & \multicolumn{2}{|c|}{$k=1$} 	& \multicolumn{2}{|c|}{$k=2$}  & \multicolumn{2}{|c|}{$k=3$} 		   \\
			\hline
			$\# \mesh$
			& $\#$ \text{DoFs}  & \text{ rate } & $\#$ \text{DoFs} & \text{ rate } & $\#$ \text{DoFs}  & \text{ rate } & $\#$ \text{DoFs} & \text{ rate }\\
			\hline
			32 & 137  & --- & 281 & --- & 457 & --- & 665 & --- \\
			\hline
			128 & 497   & 1.76& 1041&3.60 & 1713 & 4.62 & 2513 &  5.58\\
			\hline
			512 &  1889  & 1.90& 4001  &3.82  & 6625 & 4.82& 9761  & 5.92\\
			\hline
			2048 & 7361  & 1.95 & 15681& 3.92 & 26049  & 4.94 & 38465& 5.82 \\
			\hline
			8192 & 29057  & 1.97& 62081 & 3.95 & 103197 & - & 152705 & -\\
			\hline
			32768 & 115457  & 1.99& 247041  & - & 411393 & - & 608513 & -\\
			\hline
\hline
			Stab error & \multicolumn{2}{|c|}{$k=0$}    & \multicolumn{2}{|c|}{$k=1$} 	& \multicolumn{2}{|c|}{$k=2$}  & \multicolumn{2}{|c|}{$k=3$} 		   \\
			\hline
			$\# \mesh$
			& $\#$ \text{DoFs}  & \text{ rate } & $\#$ \text{DoFs} & \text{ rate } & $\#$ \text{DoFs}  & \text{ rate } & $\#$ \text{DoFs} & \text{ rate }\\
			\hline
			32 & 137  & --- & 281 & --- & 457 & --- & 665 & --- \\
			\hline
			128 & 497   & 0.95& 1041&1.72 & 1713 & 2.70 & 2513 &  3.66\\
			\hline
			512 &  1889  & 0.98& 4001  &1.90 & 6625 & 2.88& 9761  & 3.85\\
			\hline
			2048 & 7361  & 0.99 & 15681& 1.95 & 26049  & 2.95 & 38465& 3.93 \\
			\hline
			8192 & 29057  & 0.99& 62081 & 1.98 & 103197 & 2.98 & 152705 & 3.97\\
			\hline
			32768 & 115457  & 0.99& 247041  & 1.99 & 411393 & 2.99 & 608513 & 3.99\\
			\hline
		\end{tabular}
	\caption{Convergence rates of $C^0$-HHO in $H^2$-, $H^1$-, $L^2$-(semi)norms, and stabilization seminorm on triangular meshes.}\label{ex1: table: compute the convergence rate}
\end{table}

\begin{table}[!htb]
		\centering
		\begin{tabular}{|c|c|c|c|c|c|c|c|}
			\hline
			Cond No  & \multicolumn{2}{|c|}{$k=2$}  & \multicolumn{2}{|c|}{$k=3$} 	&	  \multicolumn{2}{|c|}{$k=4$}  \\
			\hline
			$\# \mesh$
			 &  \text{condensed}  & \text{full} &  \text{condensed} & \text{full}& \text{condensed} & \text{full}\\
			\hline
			32  & 2.99e+04& 8.74e+04& 8.56e+04 & 3.93e+05& 2.02e+05 & 1.53e+06 \\
			\hline
			128& 2.96e+05 & 9.33e+05&  8.82e+05 &  4.41e+06& 2.13e+06 &  1.77e+07\\
			\hline
			512 & 3.41e+06 & 1.14e+07& 1.05e+07 & 5.64e+07& 2.59e+07 &  2.34e+08\\
			\hline
			2048 &  4.96e+07  & 1.72e+08 & 1.55e+08&8.65e+08&  3.89e+08&  3.62e+09\\
			\hline
			8192 &  7.42e+08 & 2.63e+09 & 2.36e+09 & 1.34e+10& 5.95e+09&  5.68e+10\\
			\hline
			32768 & 1.13e+10 & 4.09e+10 & 3.63e+10 & 2.11e+11& 9.23e+10 &  8.97e+11\\
			\hline
		\end{tabular}
	\caption{Condition number of condensed  and  full linear system for $C^0$-HHO on triangular meshes.}\label{ex1: table: compute the condition number}
\end{table}

Table \ref{ex1: table: compute the condition number} reports the condition number of the full and condensed linear systems arising from the $C^0$-HHO methods with $k\in\{2,3,4\}$ on the same meshes as above. For fixed $k$, all the condition numbers scale as $\mathcal{O}(h^{-4})$, as expected. In addition, the condition number of the condensed linear system is always smaller than that of the full linear system. The ratio is about $4$, $6$, $9$ for $k=2$, $3$, $4$, respectively. Furthermore, the condition number of the condensed linear system is above $10^{10}$ on the finest mesh for $k\in\{2,3\}$ and on the two finest meshes for $k=4$, thereby causing an error stagnation at about $10^{-8}$ with our current implementation. Finally, we mention that the above condition numbers correspond to nodal basis functions based on the Fekete points. We also tested modal basis functions, but obtained even larger condition numbers (by a factor of about $2$, $4$, $8$ for $k=2$, $3$, $4$, respectively).

\subsection{Comparison with $C^0$-IPDG, HHO, DG, and Morley and HCT FEM}

In this section, we compare the computational performance of $C^0$-HHO with the $C^0$-IPDG,  HHO, dG,  Morley, and HCT methods on triangular meshes. We point out that we implement the jump of the normal gradient for dG and $C^0$-IPDG methods using the weight $n_{\partial}(k+1)^2 h_F^{-1}$  for all $F\in\Fall$, with $n_{\partial} = 4$. In addition, we implement the jump of the trace term for dG methods using the weight $n_{\partial}(k+1)^6 h_F^{-3}$ for all $F\in\Fall$.

We first compare the $C^0$-HHO method to the dG, HHO (more precisely, the so-called HHO(A) method from \cite{DongErn2021biharmonic}), and $C^0$-IPDG methods. To put the $C^0$-HHO and the other methods on a fair comparison basis, we compare the $C^0$-HHO and HHO methods with face polynomial degree $k\ge0$ to the dG and $C^0$-IPDG methods with cell polynomial degree $\ell=k+2$, so that all the methods deliver the same decay rates on the $H^2$-error. Moreover, all the methods are assembled by using affine geometric mappings from a reference triangle. Interestingly, we point out that using affine geometric mappings can accelerate the assembling procedure about 4 to 8 times for DG and HHO methods in contrast with the use of a physical basis in each mesh cell, see \cite[Sec. 6]{DGpolybook} for a more detailed discussion.

A comparison of total DoFs, assembling time (including
static condensation if applicable), and condition number of the linear system (again after static condensation if applicable) is presented
in Table~\ref{ex1:table Comparison for time for various methods} for the four discretization methods. We consider a triangular mesh with $32768$ cells and let the polynomial degree vary from
$k=0$ to $k=3$. Concerning assembling time (the reported values are meaningful up to 5-10\%), we observe that for the lowest-order cases ($k\in\{0,1\}$), $C^0$-HHO methods are the most effective, whereas for the higher-order cases ($k\in\{2,3\}$), $C^0$-IPDG methods take slightly less time. Another interesting observation is that $C^0$-HHO methods spend about $20\%$ to $30\%$ less time for assembling the linear system compared to (fully discontinuous) HHO methods and dG methods.
Concerning the condition number,
the first observation is that the condition number for $C^0$-HHO and (fully discontinuous) HHO methods is comparable although the latter is about 10$\%$ to 30$\%$ larger. The second  observation is that the condition number of $C^0$-HHO methods is (significantly) smaller than the one produced by $C^0$-IPDG and (fully discontinuous) dG methods. One reason for this difference is the elimination of the higher-order bubble functions from the cell unknowns in $C^0$-HHO methods. We also mention that the condition number of the $C^0$-IPDG and dG methods are about 4 times smaller if one chooses $n_{\partial} =1 $ instead of $n_{\partial} =4$, although the former value is below the minimal coercivity threshold predicted by the theory.
\begin{table}[!htb]
	\centering
		\begin{tabular}{|c|c|c|c||c|c|c|c|}
			\hline
			\multicolumn{4}{|c||}{$C^0$-HHO}
			&
			\multicolumn{4}{|c|}{$C^0$-IPDG}  \\
			\hline
			\hline
			order  & kDoFs & assembling (s) &   Cond No & order & kDoFs  & assembling  (s)&  Ratio   \\
			\hline
			$k=0$ & $114$  & $75$ &   2.27e+08 &  $\ell=2$ & $66$ & $ 112$ &   67.4   \\
			\hline
			$k=1$ & $212$  & $197$ & 2.50e+09 & $\ell=3$& $148$ & $238$ & 168.0  \\
			\hline
			$k=2$ &  $310$ & $ 507$ &  1.13e+10  & $\ell=4$ & $263$  & $498$ & 369.0  \\
			\hline
			$k=3$ & $407$ & $1091$ &  3.63e+10 & $\ell=5$ & $  411$ & $ 1061$ &   597.8 \\
			\hline
			\hline
			\multicolumn{4}{|c||}{HHO}
			&
			\multicolumn{4}{|c|}{dG}  \\
			\hline
			\hline
			order  & kDoFs  & assembling (s) &  Ratio  & order & kDoFs  & assembling  (s)& Ratio \\
			\hline
			$k=0$ & $147$   & $118$ & 1.3  &  $\ell=2$ & $197$ & $154$ &  23.1 \\
			\hline
			$k=1$ & $244$ & $306$ &1.2&  $\ell=3$& $328$ & $330$ &  70.0  \\
			\hline
			$k=2$ &  $342$  & $666$ & 1.1   & $\ell=4$ & $492$  & $718$ & 125.7 \\
			\hline
			$k=3$ & $440$ & $1284$ & 1.1& $\ell=5$ & $  688$ & $ 1438$ &  226.2\\
			\hline
		\end{tabular}
	\caption{Comparison of total DoFs, assembling time, and condition number for the $C^0$-HHO, $C^0$-IPDG, HHO and dG methods. The polynomial degree is chosen so that all the methods deliver the same decay rates on the $H^2$-error. }
	\label{ex1:table Comparison for time for various methods}
\end{table}

Finally, we compare in Table~\ref{ex2:Comparison for DoFs}
the $C^0$-HHO methods to the Morley and HCT methods on a 
a triangulation composed of 32768 cells. The first observation is that the  Morley element only takes about 65$\%$ of the assembling time of $C^0$-HHO methods with $k=0$, but the condition number of the $C^0$-HHO method is about half of that obtained with the Morley element. The second observation is that the $C^0$-HHO method with $k=1$ takes about 60$\%$ of the assembling time of the HCT element and, at the same time, delivers a condition number which is 5 times smaller than that obtained with the HCT element. One possible reason for the discrepancy in assembly time is that HCT methods cannot be constructed by using affine geometric maps and that these methods lead to larger stencils compared to $C^0$-HHO methods.

\begin{table}[htb]
		\centering
		\begin{tabular}{||c|c|c|c||c|c|c|c||}
			\hline
			$k=0$& kDoFs & assembling (s)&  Cond No
			& $k=1$ & kDoFs & assembling (s)& Cond No \\
			\hline
			Morley & $65$ & $42$ & 4.79e+08& HCT & $97$ & $ 354$ & 1.20e+10\\
			\hline
			$C^0$-HHO & $114$  & $65$ &  2.27e+08  &  $C^0$-HHO & $212$  & $197$ &  2.50e+09\\
			\hline
		\end{tabular}
	\caption{Comparison of total DoFs, assembling time, and condition number for the $C^0$-HHO, Morley, and HCT methods. The polynomial degree is chosen so that the two methods in the same column deliver the same decay rates on the $H^2$-error.
		Triangular mesh composed of $32768$ cells, $ 49408$ edges, and $16641$ vertices.}
	\label{ex2:Comparison for DoFs}
\end{table}

\subsection{Nonsmooth problem with type (II) BC's}\label{sec:res_singular}

We select $f$ and the boundary conditions on $\Omega:=(-1,1)^2$ so that the exact solution to \eqref{pde} is
\begin{equation}
	u(x,y)
	: = \left\{
	\begin{array}{ll}
		\sin(\pi y)\Big(\frac{x^2}{4 \pi} + \frac{1}{8\pi^3}(\cos(2\pi x)-1)\Big) &\quad (-1,0]\times (-1,1)
		\vspace{2mm}	\\
		\sin(\pi y)\Big(\frac{x}{4 \pi^2} - \frac{1}{8\pi^3} \sin(2\pi x)\Big) &  \quad (0,1)\times (-1,1).
	\end{array}
	\right.
\end{equation}
It can be checked that the solution satisfies $u\in H^{3.5-\epsilon}$ with $\epsilon>0$ arbitrarily close to zero. More precisely, the fourth-order derivative $\partial_{xxxx} u$ contains a Dirac measure supported on the line $\{x=0\}$. In addition, the forcing function satisfies $f\in H^{-0.5-\epsilon}(\Omega)$. Thus, the regularity of $u$ and $f$ are compatible with Assumption~\ref{ass:regularity}.

\begin{figure}[htb]
	\centering
	\includegraphics[scale=0.32]{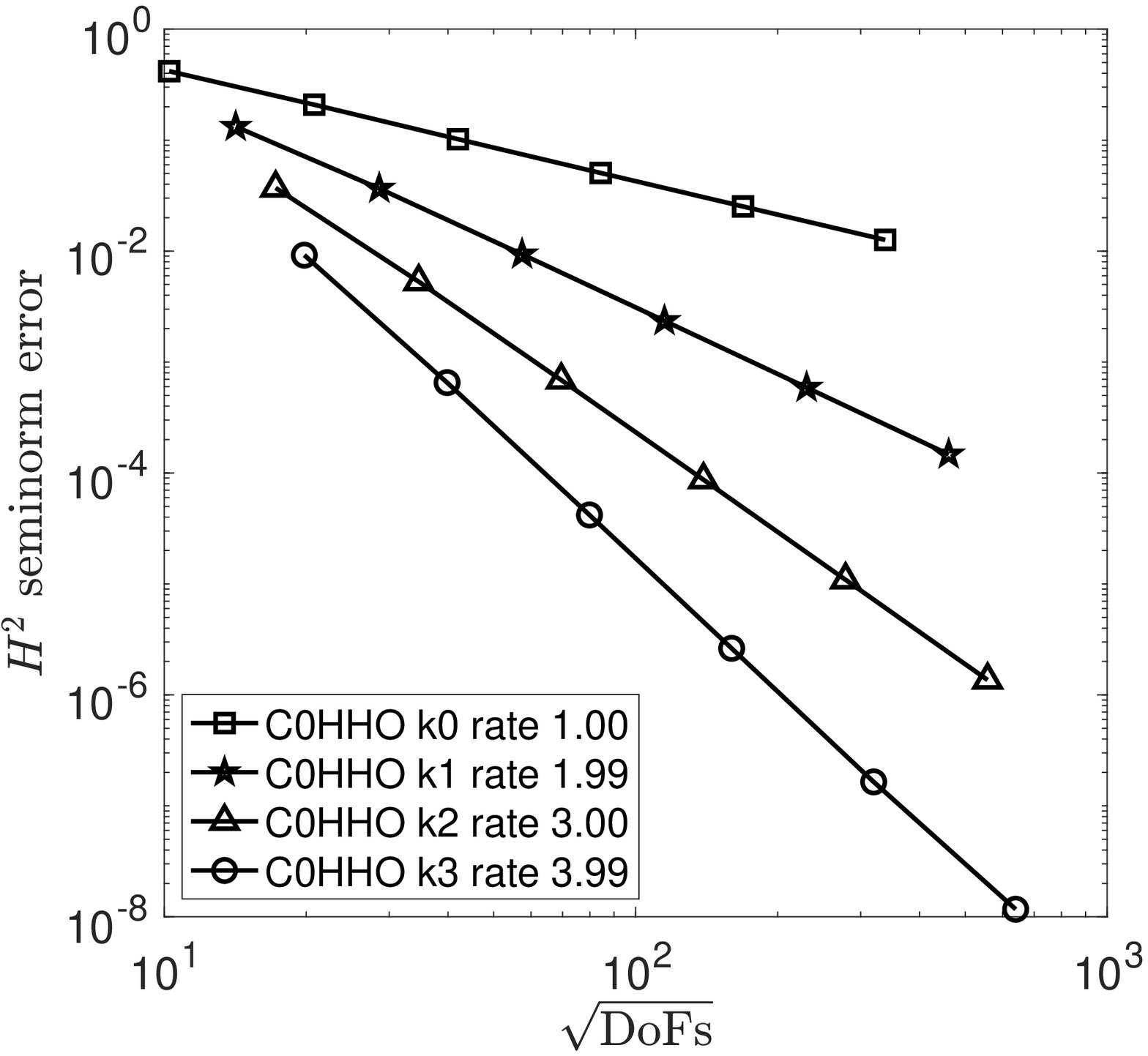}
	\includegraphics[scale=0.32]{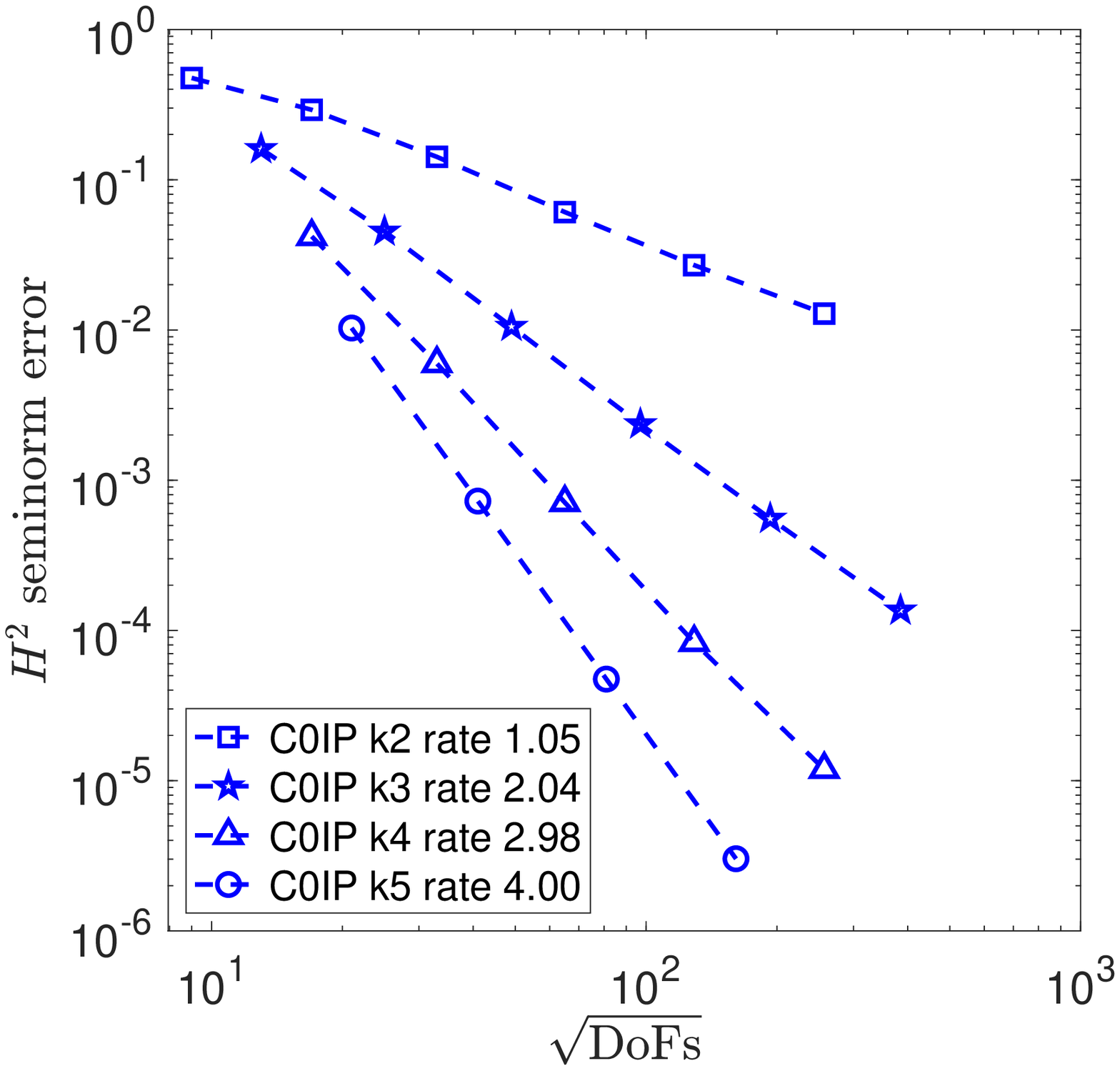}
	\caption{
		\label{ex2:h_refine_aligned_meshes}
		Convergence of $C^0$-HHO and $C^0$-IPDG methods in $H^2$-seminorms on triangular meshes with $k=0,1,2,3$.}
\end{figure}

The error decay rates depend on whether one considers meshes aligned with the line $\{x=0\}$ that supports the singularity. Considering first aligned meshes composed of $\{32,128,512,2048,8192,32768\}$ triangular cells, Figure \ref{ex2:h_refine_aligned_meshes} shows that, as expected,
the $C^0$-HHO and $C^0$-IPDG methods deliver optimal convergence rates.
Considering now non-aligned meshes composed of $\{50,162,578,$ $2178,8450,33282,132098\}$ triangular cells (see Figure \ref{ex2:mesh_figure_nonaligned} for an illustration), we notice from Figure~\ref{ex2:h_refine_nonaligned_meshes} that both
$C^0$-HHO (with $k=0$) and $C^0$-IPDG (with $\ell=k+2=2$) deliver the optimal decay rate $\mathcal{O}(h)$ for the error, whereas the decay rate levels off, as expected, at $\mathcal{O}(h^{1.5})$ in the higher-order case. This result is again in agreement with the above theoretical results.

\begin{figure}[htb]
	\centering
	\includegraphics[scale=0.3]{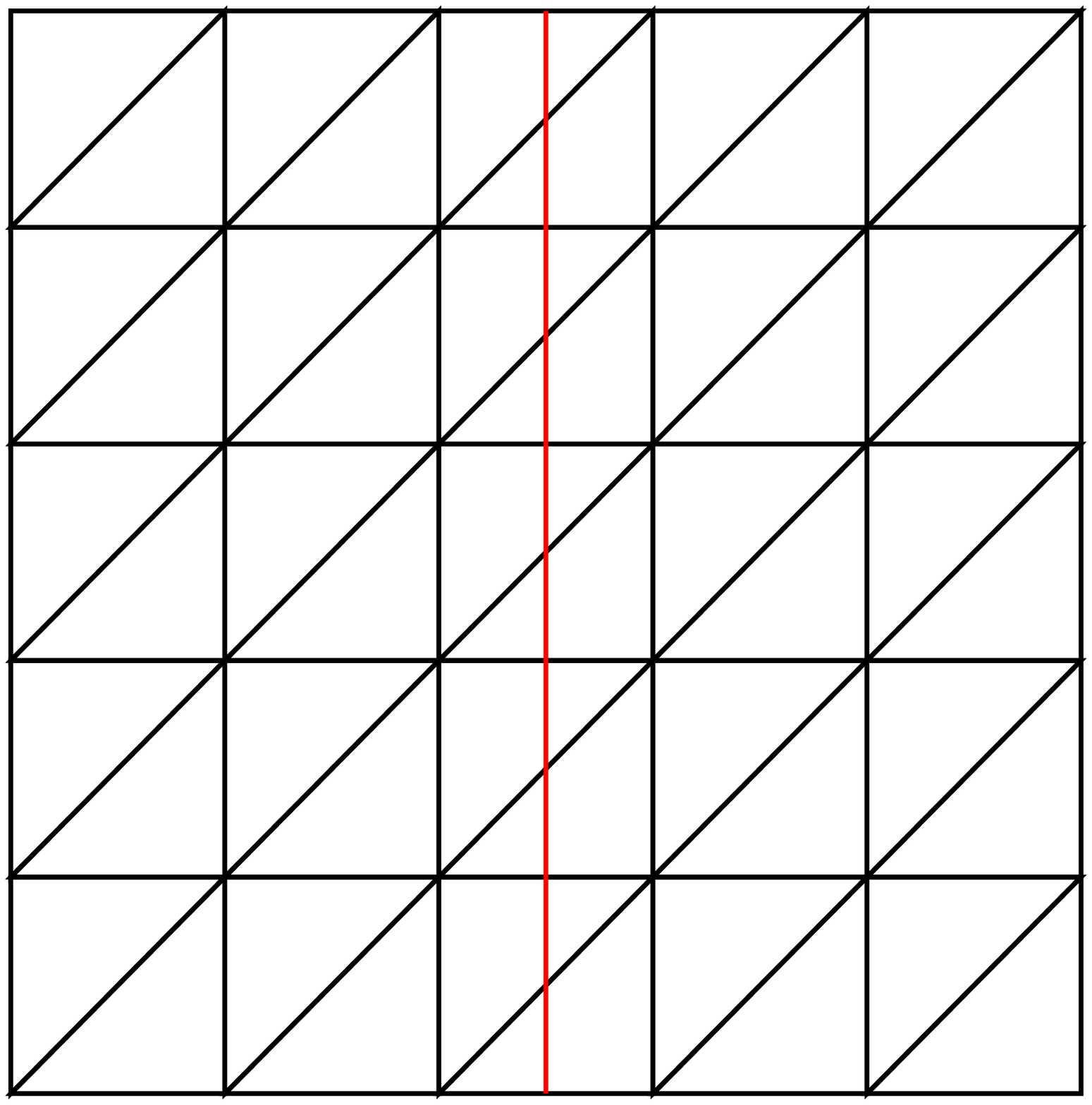}
	\includegraphics[scale=0.3]{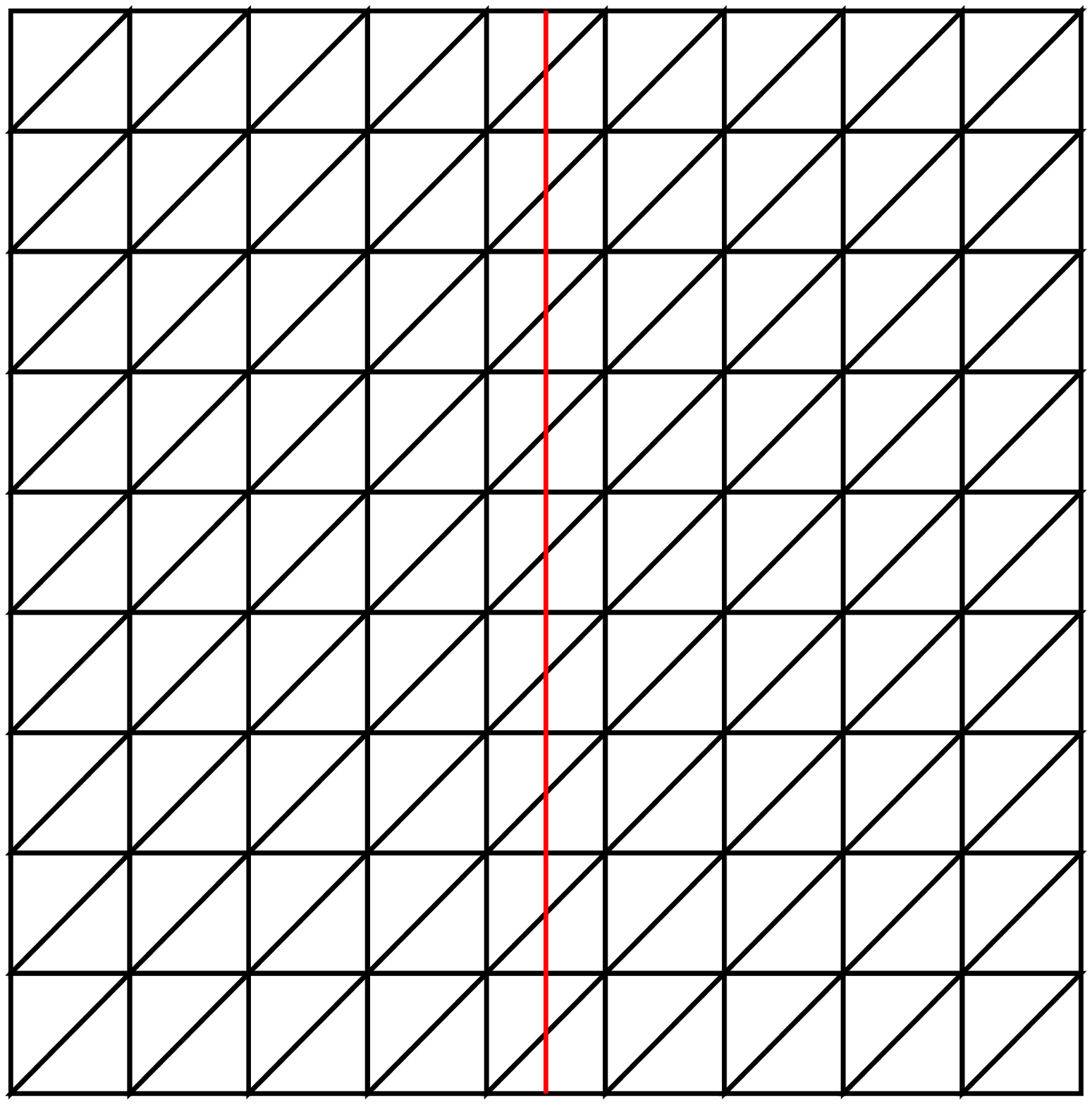}
	\caption{Two examples of triangular (nonaligned) meshes with $50$ (left) and $162$ (right) triangles.}\label{ex2:mesh_figure_nonaligned}
\end{figure}
\begin{figure}[htb]
	\centering
	\includegraphics[scale=0.32]{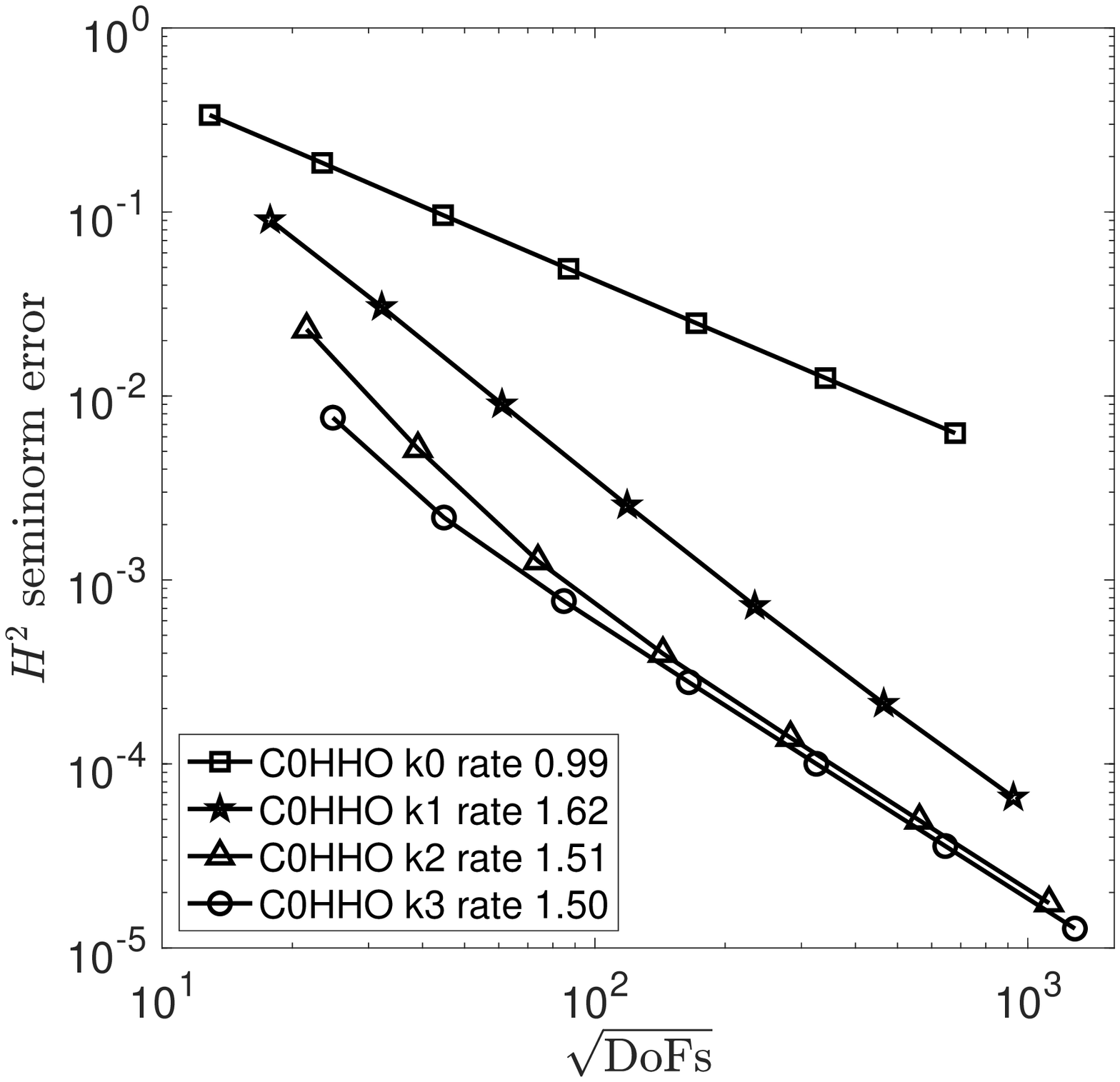}
	\includegraphics[scale=0.32]{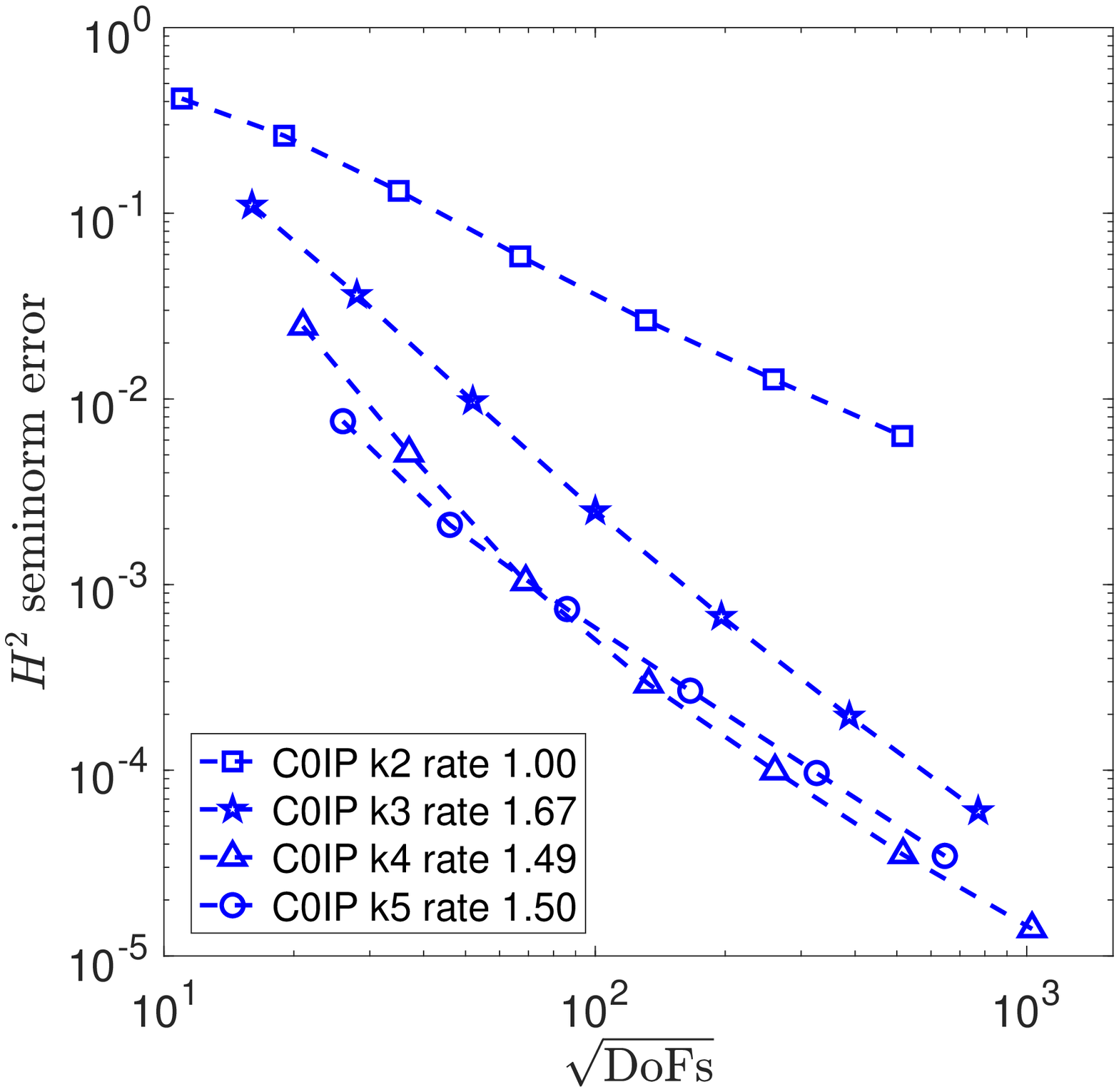}
	\caption{
		\label{ex2:h_refine_nonaligned_meshes}
		Convergence of $C^0$-HHO and $C^0$-IPDG methods in $H^2$-seminorms on triangular meshes with $k=0,1,2,3$.}
\end{figure}

\bibliographystyle{siam}
\bibliography{HHO_N}

\def\cprime{$'$} \def\cprime{$'$} \def\cprime{$'$} \def\cprime{$'$}
\begin{thebibliography}{10}

\bibitem{BoDPGK:18}
{\sc F.~Bonaldi, D.~A. Di~Pietro, G.~Geymonat, and F.~Krasucki}, {\em A hybrid
  high-order method for {K}irchhoff-{L}ove plate bending problems}, ESAIM Math.
  Model. Numer. Anal., 52 (2018), pp.~393--421.

\bibitem{BrennerC0}
{\sc S.~C. Brenner and L.-Y. Sung}, {\em {$C^0$} interior penalty methods for
  fourth order elliptic boundary value problems on polygonal domains}, J. Sci.
  Comput., 22/23 (2005), pp.~83--118.

\bibitem{DGpolybook}
{\sc A.~Cangiani, Z.~Dong, E.~H. Georgoulis, and P.~Houston}, {\em
  {$hp$}-version discontinuous {G}alerkin methods on polygonal and polyhedral
  meshes}, SpringerBriefs in Mathematics, Springer, Cham, 2017.

\bibitem{carstensen2021lowerorder}
{\sc C.~Carstensen and N.~Nataraj}, {\em Lowest-order equivalent nonstandard
  finite element methods for biharmonic plates}, ESAIM Math. Model. Numer.
  Anal., 56 (2022), pp.~41--78.

\bibitem{HHOHMM}
{\sc T.~Chaumont-Frelet, A.~Ern, S.~Lemaire, and F.~Valentin}, {\em Bridging
  the multiscale hybrid-mixed and multiscale hybrid high-order methods}, ESAIM
  Math. Model. Numer. Anal., 56 (2022), pp.~261--285.

\bibitem{CheFe:16}
{\sc G.~Chen and M.~Feng}, {\em A {$C^0$}-weak {G}alerkin finite element method
  for fourth-order elliptic problems}, Numer. Methods Partial Differential
  Equations, 32 (2016), pp.~1090--1104.

\bibitem{cicuttin2021hybrid}
{\sc M.~Cicuttin, A.~Ern, and N.~Pignet}, {\em Hybrid high-order methods---a
  primer with applications to solid mechanics}, SpringerBriefs in Mathematics,
  Springer, Cham, [2021].

\bibitem{CoDPE:16}
{\sc B.~Cockburn, D.~A. Di~Pietro, and A.~Ern}, {\em Bridging the {H}ybrid
  {H}igh-{O}rder and hybridizable discontinuous {G}alerkin methods}, ESAIM
  Math. Model. Numer. Anal., 50 (2016), pp.~635--650.

\bibitem{di2020hybrid}
{\sc D.~A. Di~Pietro and J.~Droniou}, {\em The hybrid high-order method for
  polytopal meshes}, vol.~19 of MS\&A. Modeling, Simulation and Applications,
  Springer, Cham, [2020].
\newblock Design, analysis, and applications.

\bibitem{DiPEr:15}
{\sc D.~A. Di~Pietro and A.~Ern}, {\em A {Hybrid High-Order} locking-free
  method for linear elasticity on general meshes}, Comput. Meth. Appl. Mech.
  Engrg., 283 (2015), pp.~1--21.

\bibitem{DiPEL:14}
{\sc D.~A. Di~Pietro, A.~Ern, and S.~Lemaire}, {\em An arbitrary-order and
  compact-stencil discretization of diffusion on general meshes based on local
  reconstruction operators}, Comput. Meth. Appl. Math., 14 (2014),
  pp.~461--472.

\bibitem{DongErn2021singular}
{\sc Z.~Dong and A.~Ern}, {\em Hybrid high-order method for singularly
  perturbed fourth-order problems on curved domains}, ESAIM Math. Model. Numer.
  Anal., 55 (2021), pp.~3091--3114.

\bibitem{DongErn2021biharmonic}
{\sc Z.~Dong and A.~Ern}, {\em Hybrid high-order and weak {G}alerkin methods
  for the biharmonic problem}, SIAM J. Numer. Anal., 60 (2022), pp.~2626--2656.

\bibitem{HCT}
{\sc J.~Douglas, Jr., T.~Dupont, P.~Percell, and R.~Scott}, {\em A family of
  {$C^{1}$} finite elements with optimal approximation properties for various
  {G}alerkin methods for 2nd and 4th order problems}, RAIRO Anal. Num\'{e}r.,
  13 (1979), pp.~227--255.

\bibitem{EGHLMT:02}
{\sc G.~Engel, K.~Garikipati, T.~J.~R. Hughes, M.~G. Larson, L.~Mazzei, and
  R.~L. Taylor}, {\em Continuous/discontinuous finite element approximations of
  fourth-order elliptic problems in structural and continuum mechanics with
  applications to thin beams and plates, and strain gradient elasticity},
  Comput. Methods Appl. Mech. Engrg., 191 (2002), pp.~3669--3750.

\bibitem{ErnGuermond:17}
{\sc A.~Ern and J.-L. Guermond}, {\em Finite element quasi-interpolation and
  best approximation}, ESAIM Math. Model. Numer. Anal. (M2AN), 51 (2017),
  pp.~1367--1385.

\bibitem{Ern_Guermond_FEs_I_2021}
\leavevmode\vrule height 2pt depth -1.6pt width 23pt, {\em Finite Elements {I}:
  {A}pproximation and Interpolation}, vol.~72 of Texts in Applied Mathematics,
  Springer Nature, Cham, Switzerland, 2021.

\bibitem{Ern_Guermond_FEs_II_2021}
\leavevmode\vrule height 2pt depth -1.6pt width 23pt, {\em Finite Elements
  {II}: Galerkin Approximation, Elliptic and Mixed PDEs}, vol.~73 of Texts in
  Applied Mathematics, Springer Nature, Cham, Switzerland, 2021.

\bibitem{ErnGuer2021}
\leavevmode\vrule height 2pt depth -1.6pt width 23pt, {\em Quasi-optimal
  nonconforming approximation of elliptic {PDE}s with contrasted coefficients
  and {$H^{1+r}$}, {$r>0$}, regularity}, Found. Comput. Math., 22 (2022),
  pp.~1273--1308.

\bibitem{C0HDG}
{\sc G.~Fu}, {\em Uniform auxiliary space preconditioning for {HDG} methods for
  elliptic operators with a parameter dependent low order term}, SIAM J. Sci.
  Comput., 43 (2021), pp.~A3912--A3937.

\bibitem{GirRa:86}
{\sc V.~Girault and P.-A. Raviart}, {\em Finite element methods for
  {N}avier-{S}tokes equations}, vol.~5 of Springer Series in Computational
  Mathematics, Springer-Verlag, Berlin, 1986.
\newblock Theory and algorithms.

\bibitem{Grisvard}
{\sc P.~Grisvard}, {\em Elliptic problems in nonsmooth domains}, vol.~69 of
  Classics in Applied Mathematics, Society for Industrial and Applied
  Mathematics (SIAM), Philadelphia, PA, 2011.

\bibitem{Kozlov-Mazya:1996}
{\sc V.~A. Kozlov and V.~G. Maz\cprime~ya}, {\em Singularities in solutions to
  mathematical physics problems in non-smooth domains}, in Partial differential
  equations and functional analysis, vol.~22 of Progr. Nonlinear Differential
  Equations Appl., Birkh\"{a}user Boston, Boston, MA, 1996, pp.~174--206.

\bibitem{Bridging_HHO}
{\sc S.~Lemaire}, {\em Bridging the hybrid high-order and virtual element
  methods}, IMA J. Numer. Anal., 41 (2021), pp.~549--593.

\bibitem{morley1968}
{\sc L.~Morley}, {\em The triangular equilibrium element in the solution of
  plate bending problems}, Aero. Quart., 19 (1968), pp.~149--169.

\bibitem{MuWYZ:14}
{\sc L.~Mu, J.~Wang, X.~Ye, and S.~Zhang}, {\em A {$C^0$}-weak {G}alerkin
  finite element method for the biharmonic equation}, J. Sci. Comput., 59
  (2014), pp.~473--495.

\bibitem{ScoZh:90}
{\sc R.~L. Scott and S.~Zhang}, {\em Finite element interpolation of nonsmooth
  functions satisfying boundary conditions}, Math. Comp., 54 (1990),
  pp.~483--493.

\bibitem{VZ3}
{\sc A.~Veeser and P.~Zanotti}, {\em Quasi-optimal nonconforming methods for
  symmetric elliptic problems. {III}---{D}iscontinuous {G}alerkin and other
  interior penalty methods}, SIAM J. Numer. Anal., 56 (2018), pp.~2871--2894.

\bibitem{VZ2}
\leavevmode\vrule height 2pt depth -1.6pt width 23pt, {\em Quasi-optimal
  nonconforming methods for symmetric elliptic problems.
  {II}---{O}verconsistency and classical nonconforming elements}, SIAM J.
  Numer. Anal., 57 (2019), pp.~266--292.

\bibitem{WangXu:06}
{\sc M.~Wang and J.~Xu}, {\em The {M}orley element for fourth order elliptic
  equations in any dimensions}, Numer. Math., 103 (2006), pp.~155--169.

\bibitem{ZhChZ:16}
{\sc J.~Zhao, S.~Chen, and B.~Zhang}, {\em The nonconforming virtual element
  method for plate bending problems}, Math. Models Methods Appl. Sci., 26
  (2016), pp.~1671--1687.

\end{thebibliography}


\end{document}